\newtheorem{theorem}{Theorem}[section]\rm
\newtheorem{Claim}[theorem]{Claim}
\newtheorem{conjecture}[theorem]{Conjecture}
\newtheorem{construction}[theorem]{Construction}
\newtheorem{claim}[theorem]{Claim}
\newtheorem{definition}[theorem]{Definition}
\newtheorem{problem}[theorem]{Problem}
\newtheorem{remark}[theorem]{Remark}
\newtheorem{question}[theorem]{Question}
\theoremstyle{plain}
\DeclareMathOperator{\mad}{\mathrm{mad}}
\newtheorem*{THMMAIN}{Theorem~\ref{thm:modify}}
\newtheorem{Counting}[theorem]{Counting}
\newtheorem{subclaim}{Claim}[theorem] % nested inside theorem proofs
\newcommand{\tv}{ \textnormal{tokens}}
\newcommand{\pr}{\textnormal {primary}}
\newcommand{\bi}{\textnormal{\textsc{Big}}}
\newcommand{\ba}{\textnormal{\textsc{Basic}}}
\newcommand{\no}{\textnormal{\textsc{Nonbasic}}}
\def\tok{\textrm{tokens}}
\newcommand\bolder[1]{\contour{black}{#1}}
\title{\bf Tight upper bound on the clique size  in the square of 2-degenerate graphs}
\author{Seog-Jin Kim\thanks{Department of Mathematics Education, Konkuk University,
Korea.  E-mail: {\tt skim12@konkuk.ac.kr
}},~Xiaopan Lian\thanks{Center for Combinatorics and LPMC, Nankai University, China.
 \emph{E-mail:}%{\tt xiaopanlian@mail.nankai.edu.cn}
{\tt lianxiaopanlxp@163.com} }
}
\begin{document}
%\linenumbers

\maketitle

\begin{abstract}
The {\em square} of a graph $G$, denoted $G^2$, has the
same vertex set as $G$ and has an edge between two vertices if the distance between them in $G$ is at most $2$. In general, $\Delta(G) + 1 \leq  \chi(G^2) \leq \Delta(G)^2 +1$ for every graph $G$.
Charpentier~\cite{cc}  asked whether $\chi(G^2) \leq 2 \Delta(G)$ if $mad(G) < 4$.  But Hocquard, Kim, and Pierron \cite{HKP} answered his question negatively.
For every even value of $\Delta(G)$, they constructed a 2-degenerate graph $G$ such that $\omega(G^2) = \frac{5}{2} \Delta(G)$. Note that if $G$ is a  2-degenerate graph, then $mad(G) < 4$. Thus, we have that
\[
{\displaystyle \frac{5}{2} \Delta(G) \leq \max \{\chi(G^2) : G \mbox{ is  a 2-degenerate graph} \} \leq 3 \Delta(G) +1}.
\]
So, it was naturally asked whether there exists a constant $D_0$ such that
$\chi(G^2) \leq \frac{5}{2} \Delta(G)$ if $G$ is a 2-degenerate graph with $\Delta(G) \geq D_0$.
Recently Cranston and Yu \cite{CY} showed that $\omega(G^2) \leq \frac{5}{2} \Delta(G)+72$ if $G$ is a 2-degenerate graph, and   $\omega(G^2) \leq \frac{5}{2} \Delta(G)+60$ if $G$ is a 2-degenerate graph with $\Delta(G) \geq 1729$. We show that there exists a constant $D_0$ such that $\omega(G^2) \leq \frac{5}{2} \Delta(G)$ if $G$ is a 2-degenerate graph
with $\Delta(G) \geq D_0$. This upper bound on $\omega(G^2)$ is tight by the construction in \cite{HKP}.
\end{abstract}

%Key words: chromatic number; degeneracy; clique number;  maximum average degree
%%%%%%%%%%%%%%%%%%%%%%%%%%%%%%%%%%%%%%%%%%%%%%%%%%%%%%%%%%%%%%%%%%%%%

\section{Introduction}

The {\em square} of a graph $G$, denoted $G^2$, has the
same vertex set as $G$ and has an edge between two vertices if the distance between
them in $G$ is at most $2$.
Let $\chi(G)$ be the  chromatic number of a graph $G$, and let $\omega(G)$ be the clique number of $G$.
For every graph $G$, trivially $\Delta(G) + 1 \leq \chi(G^2) \leq \Delta(G)^2 + 1$.

\medskip
Wegner \cite{Wegner} proposed the following conjecture.

\begin{conjecture}\cite{Wegner} \label{conj-Wegner}
Let $G$ be a planar graph. The chromatic number
$\chi(G^2)$ of $G^2$ is at most 7 if $\Delta(G) = 3$,
at most $\Delta(G)+5$ if $4 \leq \Delta(G) \leq 7$, and at most $\lfloor \frac{3 \Delta(G)}{2} \rfloor +1$ if $\Delta(G) \geq 8$.
\end{conjecture}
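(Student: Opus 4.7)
The plan is to attack the hardest case, $\Delta(G) \geq 8$, by a combination of a discharging argument leveraging the planarity of $G$ and an extremal analysis of a minimum counterexample. I would argue by contradiction: let $G$ be a vertex-minimum planar graph with $\Delta(G) \geq 8$ that violates $\chi(G^2) \leq \lfloor 3 \Delta(G)/2 \rfloor + 1$. The cases $3 \leq \Delta(G) \leq 7$ I would treat separately, since there the target bound $\Delta(G) + 5$ (or $7$ for $\Delta(G)=3$) has a different flavor and forces bespoke structural arguments.

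First I would show that such a minimum $G$ is $2$-connected and has minimum degree at least some threshold $\delta_0$: if a vertex $v$ has few neighbors, then at most $d_G(v) \cdot \Delta(G)$ colors are forbidden at $v$ in $G^2$, and whenever $d_G(v)$ is small enough that this is strictly less than $\lfloor 3\Delta(G)/2\rfloor + 1$, the coloring of $G - v$ (which exists by minimality of $G$) extends greedily. This reduces the analysis to graphs in which every vertex carries moderate degree. Next I would compile a list of reducible configurations, namely local substructures such that, after deleting a bounded number of vertices or edges, an inductive coloring of the remainder can be extended. Natural candidates are clusters of low-degree vertices adjacent to one another, or high-degree vertices with several small-degree neighbors whose second-neighborhoods overlap heavily; the aim is to argue that avoiding every configuration on the list forces a contradiction at the discharging stage.

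I would then run the discharging step itself. Assign initial charges $d(v) - 4$ to each vertex and $\ell(f) - 4$ to each face, so that by Euler's formula the total charge is $-8$. I would design rules that shift charge from high-degree vertices and long faces toward low-degree vertices and triangles, engineered so that absence of every reducible configuration forces each element to end with non-negative final charge, contradicting the negative total. Careful bookkeeping of charge around $\geq 5$-faces and along high-degree/low-degree adjacencies should exploit precisely the $3/2$ ratio built into the conjecture.

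The main obstacle, and the reason Wegner's conjecture has resisted a complete proof for over fifty years, is the extreme tightness of the factor $3/2$. Known discharging proofs giving bounds of the form $(5/3 + o(1))\Delta$, and more recently $(3/2 + o(1))\Delta$ along the lines of Havet--van den Heuvel--McDiarmid--Reed, exploit slack that is essentially unavailable at the conjectured threshold: near $\lfloor 3\Delta/2 \rfloor + 1$ one cannot afford to forbid even a handful of colors beyond the trivial count at any vertex. Closing this slack seems to demand either a dramatically richer family of reducible configurations (likely requiring computer-assisted enumeration in the spirit of the Four Color Theorem) or a genuinely new technique exploiting the global planar embedding rather than merely local neighborhoods. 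I would expect the bulk of the effort, and the step most likely to be the sticking point, to be the verification that the reducible-configuration list is exhaustive under the chosen discharging rules.
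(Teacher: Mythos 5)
This statement is Wegner's conjecture, and the paper does not prove it --- nor does anyone else. The paper explicitly states that Conjecture~\ref{conj-Wegner} is wide open, that the only case settled is $\Delta(G)=3$ (by Thomassen and, independently, Hartke--Jahanbekam--Thomas), and that even the much weaker clique-number bound $\omega(G^2)\le\lfloor\frac{3\Delta(G)}{2}\rfloor+1$ was only recently established by Cranston, and only for $\Delta(G)\ge 34$. So there is no ``paper's own proof'' to compare against: the statement is quoted as an open problem providing context for the line of research the paper actually contributes to (clique bounds in squares of $2$-degenerate graphs).

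Your proposal, judged as a proof, has a gap that is not a technical slip but the entire content of the problem. You describe the standard template --- minimal counterexample, minimum-degree reduction, a list of reducible configurations, discharging with charges $d(v)-4$ and $\ell(f)-4$ --- but you never exhibit a single reducible configuration, never state a discharging rule, and never carry out the unavoidability argument. The greedy extension step you sketch for low-degree vertices is the only concrete piece, and it only eliminates vertices of degree at most roughly $\frac{3}{2}$ rather than yielding a useful degree threshold, since a degree-$d$ vertex can see up to $d\cdot\Delta(G)$ colored vertices in $G^2$ and $d\cdot\Delta(G)<\lfloor\frac{3\Delta(G)}{2}\rfloor+1$ forces $d\le 1$ for large $\Delta$. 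Everything after that is a description of what a proof would have to accomplish, together with your own (accurate) assessment that the known techniques lose exactly the slack needed at the $\frac{3}{2}$ threshold. To your credit you identify this honestly, but that means the proposal should be read as a statement of why the conjecture is hard, not as a proof of it. If you want to engage with what is actually provable here, the relevant targets are the asymptotic list-coloring bound $(\frac{3}{2}+o(1))\Delta$ of Havet, van den Heuvel, McDiarmid, and Reed, or Cranston's exact clique bound for $\Delta\ge 34$, both of which require substantially more structure than the outline you give.
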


Conjecture \ref{conj-Wegner} is wide open.  The only case for which we know tight bound is when $\Delta(G) = 3$. Thomassen \cite{Thomassen} and, independently,
Hartke, Jahanbekam, and Thomas \cite{Hartke}
showed that $\chi(G^2) \leq 7$ if $G$ is a planar graph with $\Delta(G)  = 3$, which proves  Conjecture \ref{conj-Wegner} for $\Delta(G)  = 3$.
Even  the upper bound on $\omega(G^2)$ was not known until this year.  Cranston \cite{Cranston23} showed that $\omega(G^2) \leq \lfloor \frac{3 \Delta(G)}{2} \rfloor +1$ if $G$ is a planar graph with  $\Delta(G) \geq 34$.  One may see the detailed story on the study of Wegner's conjecture in \cite{Cranston22}.

\medskip

The maximum average degree  of a graph $G$, denoted $\mad(G)$, is defined as $\max_{H \subseteq G} \frac{2|E(H)|}{|V(H)|}$.  A graph $G$ is $k$-degenerate if each subgraph of $G$ contains a
vertex of degree at most $k$. Equivalently, there exists a vertex order $\sigma$ such that each vertex has at most $k$ neighbors later in $\sigma$.
Note that if $G$ is a 2-degenerate graph, then $\mad(G) < 4$.

\medskip
Recently, following problem was considered in \cite{cc} and has received some attentions.
\begin{problem}[\cite{cc}] \label{Q-main}
For each integer $k \geq 2$, what is  $\max \{\chi(G^2) : \mad(G) < 2k\}$?
\end{problem}

For $k = 2$, Charpentier~\cite{cc} conjectured that
$\chi(G^2)\leq 2\Delta(G)$ if $\mad(G) < 4$, but it was disproved
in~\cite{KP} by constructing a graph $G$ such that $\mad(G)<4$ and
$\chi(G^2) = 2\Delta(G)+2$.

Furthermore, Hocquard, Kim, and Pierron \cite{HKP} constructed a graph $G$ with $\mad(G) < 4$ and $\omega(G^2) = \frac{5}{2} \Delta(G)$ for every even value of $\Delta(G)$.
They also showed that for every $G$ such that $\mad(G)<4$, the graph  $G^2$ is $3\Delta(G)$-degenerate.  So in general we have the following bounds on $\chi(G^2)$.

\begin{equation*}
\frac{5}{2} \Delta(G) \leq \max \{\chi(G^2) : \mad(G) < 4\} \leq 3 \Delta(G) + 1.
\end{equation*}
So the following question was asked in \cite{HKP}.
\begin{question} \label{question-HKP} \cite{HKP}
 Does there exist $D_0$ such that every graph with $\Delta(G) \geq D_0$ and $mad(G) < 4$ has
$\chi(G^2) \leq \frac{5}{2} \Delta(G)$?
\end{question}

Note that the construction in \cite{HKP} is actually a 2-degenerate graph.  So, the following question was also asked in \cite{HKP}.
\begin{question} \label{question-HKP2} \cite{HKP}
Does there exist $D_0$ such that every 2-degenerate graph $G$ with $\Delta(G) \geq D_0$  has
$\chi(G^2) \leq \frac{5}{2} \Delta(G)$?
\end{question}

In this direction, Cranston and Yu \cite{CY} showed the following interesting theorem.

\begin{theorem} \label{CY-thm}
Fix a positive integer $D$. If a graph G is 2-degenerate with $\Delta(G) \leq D$, then
$\omega(G^2) \leq \frac{5}{2} D + 72$.   Furthermore,
if $G$ is a 2-degenerate graph with $D \geq 1729$, then
$\omega(G^2) \leq \frac{5}{2} D + 60$.
\end{theorem}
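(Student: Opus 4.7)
The plan is to take a clique $K$ in $G^2$ of size $k$ and show $k \leq \tfrac{5}{2}D + 72$ by a structural analysis based on a 2-degeneracy ordering. Fix an ordering $\sigma$ of $V(G)$ such that each vertex has at most two neighbors later than it; call these later neighbors the \emph{forward} neighbors, and the earlier ones \emph{back} neighbors. The forward degree is always at most $2$, while the back degree can be as large as $D$; this asymmetry is the main feature to exploit.

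Let $u_1$ and $u_2$ be the first two vertices of $K$ in $\sigma$. For every other $w \in K$, both distances $d_G(u_1,w)$ and $d_G(u_2,w)$ are at most $2$. For each $i \in \{1,2\}$, the pair $u_i w$ is either a $G$-edge (forcing $w$ into the at-most-two forward neighbors of $u_i$), or has a common neighbor $z$; that witness $z$ is either a forward neighbor of $u_i$ (few in number) or a back neighbor of $u_i$ (potentially many). The naive conclusion $|K| \leq 3 + D^2$ is much too weak, so the real goal is to exploit that $w$ must be close to \emph{both} $u_1$ and $u_2$ simultaneously.

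After peeling off a bounded number of boundary vertices, I would choose $(u_1,u_2)$ so that $u_1 u_2 \in E(G)$ and both have many $K$-neighbors, mimicking the tight construction of \cite{HKP} in which $K$ is built around two adjacent hubs. I would then partition $K$ into: (i) the two hubs; (ii) vertices in $N(u_1)\cup N(u_2)$, contributing at most $2D - |N(u_1)\cap N(u_2)|$; and (iii) vertices at distance exactly $2$ from both hubs, each lying on a length-$2$ path through some $a \in N(u_1)$ and some $b \in N(u_2)$. The heart of the argument is bounding (iii): each such $w$ is a degree-$\geq 2$ vertex with neighbors in $N(u_1) \cup N(u_2)$, so together with the hubs these vertices form an auxiliary $2$-degenerate configuration whose edges sit on $N(u_1) \cup N(u_2)$, limiting the count of type (iii) to roughly $\tfrac{1}{2}D + O(1)$ once $|N(u_1) \cap N(u_2)|$ from part (ii) is accounted for.

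The main obstacle will be justifying the choice of the central pair and controlling the many edge cases: adjacencies inside $N(u_1) \cap N(u_2)$, vertices of $K$ with $G$-degree close to $D$ versus those of degree only $2$, and witnesses that may themselves lie in $K$. I expect the additive constant $72$ to accumulate from absorbing these boundary vertices peeled off in each case. The refined threshold $D \geq 1729$ in the second statement is most likely the point at which the quantitative inequalities used above become clean, allowing the constant to drop from $72$ to $60$ as a numerical refinement rather than a structural one.
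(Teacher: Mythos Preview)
Your plan diverges sharply from the argument the paper attributes to Cranston--Yu (and reproduces in the appendix as the proof of Theorem~\ref{thm:modify}). That proof does not anchor on two hub vertices at all. Instead, it runs a token scheme along a 2-degeneracy order $\sigma$: each $v\in S$ sends a \emph{primary} token to each later neighbor, and any vertex holding $s$ primary tokens forwards $s$ \emph{secondary} tokens to each of its later neighbors. Since each $v\in S$ contributes at most $6$ tokens in total, $\sum_v \tv(v)\le 6|S|$. Vertices are then classified as \textsc{Big}, \textsc{Basic}, \textsc{NonBasic} by token thresholds, and a short discharging argument shows that all but a bounded number (at most $72$, then $60$ after feeding the first bound back in) of the clique vertices are \textsc{Basic}. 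The \textsc{Basic} part $S^*$ is an independent set appearing consecutively in a modified order $\sigma^*$; i.e.\ $(G^*,S^*)$ is \emph{nice}, and the separate inequality $|S^*|\le \tfrac{5}{2}D$ for nice graphs (Theorem~\ref{thm:cynice}) gives the result. The constants $72$ and $60$ come from this discharging bookkeeping, not from peeling off edge cases in a hub decomposition.

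Your two-hub decomposition has a genuine gap, already visible in the tight construction $G_D$. There the maximum clique $K$ of $G_D^2$ consists of the $10k$ replacement vertices, which form an \emph{independent set} in $G_D$; hence no choice of $u_1,u_2\in K$ with $u_1u_2\in E(G)$ exists. The high-degree ``hubs'' are the five original $K_5$ vertices, but after the edge replacements they are pairwise nonadjacent, so your hypothesis ``$K$ is built around two adjacent hubs'' does not match the extremal example. Moreover, for any two hubs $u_1,u_2$ the type~(iii) vertices number $3k=\tfrac{3}{4}D$, not $\tfrac{1}{2}D+O(1)$, and their distance-$2$ witnesses to $u_1$ and $u_2$ are the \emph{other three hubs}, not elements of $N(u_1)\cup N(u_2)$; so the auxiliary $2$-degenerate count you sketch for type~(iii) does not apply. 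The structural point is that the extremal configuration is governed by five (or, in the proof of Theorem~\ref{thm:Main}, six) centers simultaneously, which is exactly what the token argument captures globally and what a two-vertex anchor cannot.
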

Also it was proved in \cite{CY} that  if $\mad(G) < 4$ and $\Delta(G) \leq D$, then
$\omega(G^2) \leq \frac{5}{2} D + 532$.

\medskip
In this paper, we prove the following result.

\begin{theorem}\label{thm:Main}
There exists a constant $D_0>0$ such that for a positive integer $D$ with $D \geq D_0$, if $G$ is a 2-degenerate graph with $\Delta(G) \leq D$, then $\omega(G^2)\le \frac{5}{2}D$.
\end{theorem}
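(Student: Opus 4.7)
The plan is to argue by contradiction: assume $G$ is 2-degenerate with $\Delta(G)\le D$ and there is a clique $K\subseteq V(G^2)$ with $|K|\ge\lfloor 5D/2\rfloor+1$, aiming at a contradiction for $D$ above a threshold $D_0$ to be chosen. Fix a 2-degenerate ordering $\sigma$ of $G$ and split $N_G(v)=F(v)\sqcup B(v)$ with $|F(v)|\le 2$. Let $v_0$ be the $\sigma$-earliest vertex of $K$; then $N_G(v_0)\cap K\subseteq F(v_0)$, contributing at most two direct neighbors, and for each other $u\in K$ fix a witness $w_u\in N_G(v_0)$ with $w_u\sim_G u$.

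The easy half is the backward-witness bound: if $w\in B(v_0)$ witnesses $u$, then $v_0<u$ and both $v_0,u$ lie in $F(w)$, and $|F(w)|\le 2$ forces each backward vertex to serve at most one $u$. Consequently the backward witnesses contribute at most $|B(v_0)|\le D-|F(v_0)|$ vertices. For forward witnesses $w\in F(v_0)$, crudely bounding $|N_G(w)\cap K|-1\le D-1$ per witness gives a combined forward contribution of at most $2(D-1)$, so the combined bound is $|K|\le 3D$---about $D/2$ too weak.

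The heart of the argument is refining the forward contribution to $\tfrac{3D}{2}+O(1)$. For each forward witness $w\in F(v_0)$, partition the $u$'s it serves into those with $u>w$ in $\sigma$ (at most $|F(w)|\le 2$) and the ``middle'' $u$'s with $v_0<u<w$ and $u\sim_G w$. Each middle $u$ has $w$ as one of its two forward neighbors, so one of $u$'s forward-neighbor slots is spent inside $F(v_0)=\{w_1,w_2\}$. The plan is to run a discharging or amortized edge-count argument on the 2-degenerate subgraph $G[K\cup F(v_0)]$, combining the slot-accounting on middle vertices with the requirement that middle vertices, as members of $K$, be mutually at $G$-distance two. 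If this yields a combined middle bound of $\tfrac{3D}{2}+O(1)$ for the vertices covered by $w_1$ and $w_2$ together, then $|K|\le\tfrac{5D}{2}+O(1)$, which contradicts the assumption for $D$ large.

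The main obstacle is precisely this refinement: the naive union bound $|N_G(w_1)\cap K|+|N_G(w_2)\cap K|\le 2D$ is realized (up to constants) by the HKP extremal construction, so one cannot bound each witness neighborhood independently. Saving the $D/2$ requires a delicate case analysis on whether $w_1,w_2$ themselves lie in $K$ (adding their own witness constraints), on whether middle vertices have \emph{both} forward neighbors in $F(v_0)$ (eliminating further forward slack), and on cross-interactions of the middle-intervals at $w_1$ and $w_2$. The $+72$ and $+60$ slack in Cranston--Yu arises from exactly these corner configurations, and closing the gap to $+0$ for large $D$ is where the residual $O(1)$ must be absorbed by the $D\ge D_0$ threshold.
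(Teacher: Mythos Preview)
Your proposal is not a proof; it is an outline whose central step is left undone. You correctly derive the crude bound $|K|\le 3D$, and you correctly identify that the whole difficulty lies in improving the forward-witness contribution from $2D$ to $\tfrac{3}{2}D+O(1)$. But you do not carry out that improvement: the phrases ``run a discharging or amortized edge-count argument'' and ``delicate case analysis on whether $w_1,w_2$ themselves lie in $K$'' are not arguments, and you yourself note that the HKP construction realizes the naive union bound up to constants. Nothing in your sketch explains what forces the saving of $D/2$, and nothing explains why the residual $O(1)$ vanishes rather than being, say, $+60$ as in Cranston--Yu. Closing the gap from $+60$ to $+0$ is the entire content of the theorem, and your last paragraph essentially concedes that you do not have a mechanism for it.

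More fundamentally, your approach is local (everything is organized around the single earliest vertex $v_0$ and its two forward neighbors), whereas the paper's proof is global and structurally much deeper. The paper first invokes the Cranston--Yu reduction to pass to a ``nice'' subgraph $G^*$ with clique $S^*$ satisfying $\tfrac{5}{2}D-60<|S^*|\le\tfrac{5}{2}D$, so that all of $S^*$ is independent and lies consecutively in a 2-degeneracy order. It then builds an auxiliary multigraph $H^*$ on the set $T^*$ of hub vertices after $S^*$, proves through a long case analysis that $|T^*|=6$, that the underlying simple graph of $H^*$ is $K_6$, and that $H^*$ is nearly balanced (each multiplicity at least $D/6-2000$). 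Only after all this structure is in place does the final contradiction come, via a careful count of how the at most $60$ vertices of $S\setminus S^*$ can attach to $T^*$: this is encoded in a 2-degenerate bipartite graph $J^*$ and reduced to a small integer linear system with no feasible solution. None of this machinery is visible in your outline, and the local viewpoint at $v_0$ with only two forward witnesses gives no access to the six-hub structure that drives the actual proof.
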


Note that $D_0 = 6\times( 331\times 2+10\times{331\choose 2}+2000)
=3,292,872$ in Theorem \ref{thm:Main}.
And
note that the upper bound on $\omega(G^2)$ in Theorem \ref{thm:Main} is tight by the construction in \cite{HKP}.

\medskip
This paper is organized as follows.  In Section \ref{Prelim}, we explain important results that were proved in \cite{CY}.  In Section \ref{section-proof}, we give the proof of  Theorem \ref{thm:Main}.

%%%%%%%%%%%%%%%%%%%%%%%%%%%%%%%%%%%%%%%%%%%%%%%%%%%%%%
\section{Preliminaries} \label{Prelim}
In this section, first we provide the construction of $\omega(G^2) = \frac{5}{2} \Delta(G)$.

\begin{construction}\cite{HKP}
Fix a positive integer $D=4k+r$ where $k,r$ are positive integers with $k\ge 2$ and $0\le r\le 3$. We form graph $G_D$ as follows. Let $i$ and $j$ be positive integers with $1\le i<j\le 5$.
\begin{enumerate}[(1)]
\item  Start with the complete graph $K_5$ on vertex set $\{u_1,\ldots,u_5\}$.
\item  If $r=0$, then replace each edge $u_iu_j$ of $K_5$ with a copy of $K_{2,k}$, identifying the vertices in the part of size 2 with $u_i$ and $u_j$ for each $1\le i<j\le 5$;\\

If $r=1$, then replace $u_iu_j\in \{u_1u_2,u_3u_4\} $ with a copy of $K_{2,k+1}$ and  replace each edge $u_iu_j\in E(K_5)\setminus \{u_1u_2,u_3u_4\}$ with a copy of $K_{2,k}$, identifying the vertices in the part of size 2 with $u_i$ and $u_j$ for each $1\le i<j\le 5$;\\

If $r=2$, then replace each edge $u_iu_j\in \{u_1u_2,u_2u_3,u_3u_4,u_4u_5,u_5u_1\}$ with a copy of $K_{2,k+1}$ and replace each edge $u_iu_j\in E(K_5)\setminus\{u_1u_2,u_2u_3,u_3u_4,u_4u_5,u_5u_1\}$ with a copy of $K_{2,k}$, identifying the vertices in the part of size 2 with $u_i$ and $u_j$ for each $1\le i<j\le 5$;\\

If $r=3$, then replace each edge $u_iu_j\in \{u_1u_2,u_3u_4\}$ with a copy of $K_{2,k+2}$,  replace each edge $u_iu_j\in \{u_2u_3,u_4u_5,u_5u_1\}$ with a copy of $K_{2,k+1}$, and replace each edge $u_iu_j\in E(K_5)\setminus\{u_1u_2,u_2u_3,u_3u_4,u_4u_5,u_5u_1\}$ with a copy of $K_{2,k}$, identifying the vertices in the part of size 2 with $u_i$ and $u_j$ for each $1\le i<j\le 5$;\\

We denote by $V_e$ the $k$, $k+1$ or $k+2$ vertices added while replacing the edge $e$ of $K_5$.

\item  Now for each pair, $x$ and $y$, of vertices of degree 2 with no common neighbor, add a vertex $z_{xy}$ adjacent to both $x$ and $y$.
\end{enumerate}
Call the resulting graph $G_D$ (see Figure~\ref{fig:tight}).

Observe that $G_D$ is 2-degenerate with $\omega(G^2_D)=\lfloor\frac{5}{2}D\rfloor$ and $\Delta(G_D)=D$.  Note that when $r\in \{0,2\}$, this is the degree of vertices of the original $K_5$ and every other vertex has degree no more than $D$.  When $r\in \{1,3\}$, this is the degree of all vertices but $u_5$ of the original $K_5$ and every other vertex has degree no more than $D$.) Moreover, the vertices in $\bigcup_{e\in E(K_5)}V_e$
induce a clique of size $\lfloor\frac{5}{2}D\rfloor$ in $G^2_D$.
\end{construction}

\begin{figure}[!ht]
\centering
\begin{tikzpicture}[v/.style={fill=gray,minimum size =10pt,ellipse,inner sep=1pt},node distance=1.5cm,v2/.style={fill=black,minimum size =5pt,ellipse,inner sep=1pt},node distance=1.5cm,scale=0.75]
\node[v] (v1) at (90:4){};
\node[v] (v2) at (162:4){};
\node[v] (v3) at (234:4){};
\node[v] (v4) at (306:4){};
\node[v] (v5) at (18:4){};
\node[v2] (vv11) at (126:3.3){};
\node[v2] (vv21) at (198:3.3){};
\node[v2] (vv31) at (270:3.3){};
\node[v2] (vv41) at (342:3.3){};
\node[v2] (vv51) at (54:3.3){};

\draw[bend right=50,gray] (vv11) to node[v2,midway,gray] {} (vv31);
\draw[bend left=50,gray] (vv11) to node[v2,midway,gray] {} (vv41);
\draw[bend left=50,gray] (vv21) to node[v2,midway,gray] {} (vv51);
\draw[bend right=50,gray] (vv21) to node[v2,midway,gray] {} (vv41);
\draw[bend right=50,gray] (vv31) to node[v2,midway,gray] {} (vv51);

\node[v2] (vv12) at (126:3.4){};
\node[v2] (vv22) at (198:3.4){};
\node[v2] (vv32) at (270:3.4){};
\node[v2] (vv42) at (342:3.4){};
\node[v2] (vv52) at (54:3.4){};

\node[v2] (vv13b) at (162:1.3){};
\node[v2] (vv35b) at (306:1.3){};
\node[v2] (vv52b) at (90:1.3){};
\node[v2] (vv24b) at (234:1.3){};
\node[v2] (vv41b) at (18:1.3){};

\draw[bend left=15,gray] (vv11) to node[v2,midway,gray] {} (vv35b);
\draw[bend left=15,gray] (vv21) to node[v2,midway,gray] {} (vv41b);
\draw[bend left=15,gray] (vv31) to node[v2,midway,gray] {} (vv52b);
\draw[bend left=15,gray] (vv41) to node[v2,midway,gray] {} (vv13b);
\draw[bend left=15,gray] (vv51) to node[v2,midway,gray] {} (vv24b);

\draw[bend right=80,gray,distance=1.2cm] (vv13b) to node[v2,midway,gray] {} (vv24b);
\draw[bend right=80,gray,distance=1.2cm] (vv24b) to node[v2,midway,gray] {} (vv35b);
\draw[bend right=80,gray,distance=1.2cm] (vv35b) to node[v2,midway,gray] {} (vv41b);
\draw[bend right=80,gray,distance=1.2cm] (vv41b) to node[v2,midway,gray] {} (vv52b);
\draw[bend right=80,gray,distance=1.2cm] (vv52b) to node[v2,midway,gray] {} (vv13b);

\node[v2] (vv1) at (126:3.2){};
\node[v2] (vv2) at (198:3.2){};
\node[v2] (vv3) at (270:3.2){};
\node[v2] (vv4) at (342:3.2){};
\node[v2] (vv5) at (54:3.2){};

\node[v2] (vv13c) at (162:1.4){};
\node[v2] (vv35c) at (306:1.4){};
\node[v2] (vv52c) at (90:1.4){};
\node[v2] (vv24c) at (234:1.4){};
\node[v2] (vv41c) at (18:1.4){};

\node[v2] (vv13a) at (162:1.2){};
\node[v2] (vv35a) at (306:1.2){};
\node[v2] (vv52a) at (90:1.2){};
\node[v2] (vv24a) at (234:1.2){};
\node[v2] (vv41a) at (18:1.2){};

\draw (v1) -- (vv1) -- (v2) -- (vv2) -- (v3) -- (vv3) -- (v4) -- (vv4) -- (v5) -- (vv5) -- (v1);
\draw (v1) -- (vv11) -- (v2) -- (vv21) -- (v3) -- (vv31) -- (v4) -- (vv41) -- (v5) -- (vv51) -- (v1);
\draw (v1) -- (vv12) -- (v2) -- (vv22) -- (v3) -- (vv32) -- (v4) -- (vv42) -- (v5) -- (vv52) -- (v1);
\draw (v1) -- (vv13a) -- (v3) -- (vv35a) -- (v5) -- (vv52a) -- (v2) -- (vv24a) -- (v4) -- (vv41a) -- (v1);
\draw (v1) -- (vv13b) -- (v3) -- (vv35b) -- (v5) -- (vv52b) -- (v2) -- (vv24b) -- (v4) -- (vv41b) -- (v1);
\draw (v1) -- (vv13c) -- (v3) -- (vv35c) -- (v5) -- (vv52c) -- (v2) -- (vv24c) -- (v4) -- (vv41c) -- (v1);

\end{tikzpicture}
\caption{Black vertices induce a clique in $G^2_D$ }\label{fig:tight}
\end{figure}
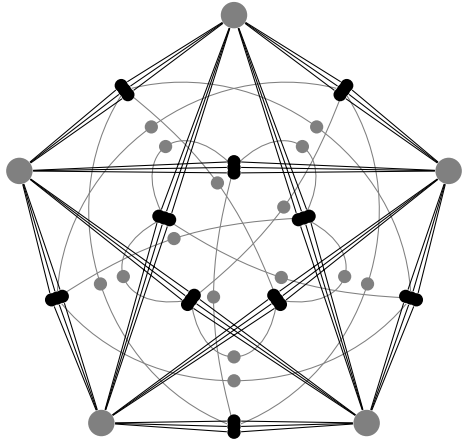

Next, we introduce a definition used in \cite{CY}.

\begin{definition} \label{nice} \rm
A graph $G$ is nice w.r.t.~a clique $S$ in $G^2$, if the following (1), (2), and (3) hold.
\begin{enumerate}[(1)]
\item $S$ is a clique in $G^2$,

\item $S$ is an independent set in $G$, and

\item  $G$ has a 2-degeneracy order  $\sigma$ such that all vertices of $S$ appear consecutively in $\sigma$.
\end{enumerate}
\end{definition}

\medskip
Note that the notion in Definition \ref{nice} is inspired by the construction
 in \cite{HKP} (Figure \ref{fig:tight}).
And the following two theorems were proved in \cite{CY}.

\begin{theorem}\label{thm:cynice}\cite{CY}
If $G$ is nice w.r.t.~a maximum clique $S$ in $G^2$ and $\Delta(G)\le D$, then $\omega(G^2)\le \frac{5}{2}D$.
\end{theorem}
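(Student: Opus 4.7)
The plan is to prove the contrapositive: assuming $|S| > \frac{5}{2}D$, I derive a contradiction. Fix the $2$-degeneracy order $\sigma$ guaranteed by niceness, let $A$ denote the vertices appearing before $S$ in $\sigma$ and $C$ those after. Since $S$ is independent in $G$ and each vertex has at most two later neighbors in $\sigma$, each $s \in S$ has all its up-neighbors in $C$; write $T(s) \subseteq C$, so $|T(s)| \le 2$. Likewise, every $a \in A$ satisfies $|N(a) \cap S| \le 2$. The clique condition in $G^2$ says that every pair $\{s,s'\} \in \binom{S}{2}$ has a common neighbor in $A \cup C$.

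My first reduction is to establish that $|T(s)| = 2$ for every $s \in S$. If $|T(s)| \le 1$ then at most $D - 1$ pairs involving $s$ can be covered by a common neighbor in $C$, so the remaining $\ge |S| - D$ pairs each require a distinct down-neighbor of $s$ in $A$; this forces $|S| \le 2D - 1 < \frac{5}{2}D$. Next I form the auxiliary multigraph $K$ on vertex set $C$ whose edge multiset is $\{T(s) : s \in S\}$, with edge multiplicities $m_e$, and set $k_c = \sum_{e \ni c} m_e = |N(c) \cap S| \le D$. A pair $\{s,s'\}$ has a common neighbor in $C$ iff $T(s) \cap T(s') \ne \emptyset$; letting $H$ be the graph on $S$ whose edges are the pairs with $T(s) \cap T(s') = \emptyset$, a direct computation gives
\[
|E(H)| \;=\; \frac{1}{2}\!\left(|S|^{2} - \sum_{c} k_c^{2} + \sum_{e} m_e^{2}\right).
\]
Every edge of $H$ must be covered by some $a \in A$ with $|N(a) \cap S| = 2$, and each such $a$ covers a unique pair, so $|E(H)| \le |T_A|$ where $T_A = \{a \in A : |N(a) \cap S| = 2\}$. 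Double-counting edges between $S$ and $A$ yields $2|T_A| \le \sum_{s}(d(s) - 2) \le |S|(D-2)$, whence $|E(H)| \le |S|(D-2)/2$.

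The main obstacle is to show that $|S| > \frac{5}{2}D$ forces $|E(H)| > |S|(D-2)/2$. The constraints $\sum_c k_c = 2|S| > 5D$ and $k_c \le D$ already imply that $K$ has at least six vertices of positive multi-degree, so $|C| \ge 6$. I would combine (i) the Cauchy--Schwarz lower bound $\sum_e m_e^2 \ge |S|^2/|E(K)|$ with $|E(K)| \le \binom{|C|}{2}$; (ii) an upper bound on $\sum_c k_c^2$ under the constraints $k_c \le D$ and $\sum k_c = 2|S|$, maximized by concentrating mass on as few $c$'s as possible; and (iii) the maximum-clique hypothesis, which guarantees that for every $c \in C$ there is some $s \in S$ at $G$-distance $> 2$ from $c$, ruling out $K$-configurations with too many low-degree $c$-vertices that would otherwise minimize $|E(H)|$. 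A bare Cauchy--Schwarz gives only $|S| \le 3D - O(1)$, so tightening to $\frac{5}{2}D$ is the crux: it will likely require a case analysis over the degree sequence $(k_c)_{c \in C}$, together with a discharging or weighted-counting step that exploits the exact geometry of the extremal construction of Hocquard, Kim, and Pierron. I expect this final structural step to be the main technical difficulty, whereas Steps 1--2 are routine double-counting.
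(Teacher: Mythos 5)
Your framework is sound as far as it goes: the partition into $A$, $S$, $C$, the reduction to $|T(s)|=2$, the auxiliary multigraph $K$ on $C$ (this is exactly the paper's $H^*$ from Construction~\ref{construction-H*}), the identity $|E(H)|=\frac12\bigl(|S|^2-\sum_c k_c^2+\sum_e m_e^2\bigr)$, and the bound $|E(H)|\le |S|(D-2)/2$ are all correct. (Note the paper itself does not prove this theorem --- it is imported from \cite{CY} --- so there is no in-paper proof to match; but the relevant machinery appears in Section~\ref{subsection-one}.) The problem is that the step you defer is not a technical loose end: it is the entire content of the theorem, and the plan you sketch for it cannot succeed. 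The aggregate inequality $|E(H)|\le |S|(D-2)/2$ together with the constraints $\sum_c k_c=2|S|$, $k_c\le D$, and Cauchy--Schwarz on $\sum_e m_e^2$ does not force $|S|\le\frac52 D$. Concretely, take $|S|$ slightly above $\frac52D$, five vertices of $C$ with $k_c=D$ and one with $k_c$ tiny, multiplicities spread over $K_6$: then $\sum_c k_c^2\approx 5D^2$, $\sum_e m_e^2\ge |S|^2/15\approx\frac{5}{12}D^2$, so $|E(H)|$ can be as small as about $\frac{5}{6}D^2$, comfortably below $|S|(D-2)/2\approx\frac54 D^2$. Your constraints admit this configuration, so no contradiction arises. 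Even the genuine extremal configuration ($|C|=5$, balanced $K_5$, $|S|=\frac52D$) leaves slack of order $D^2$ in your inequality, so no amount of perturbation analysis around it will produce the contradiction at $|S|=\frac52D+1$.

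The missing ingredient is the \emph{local} version of your counting, applied per vertex of $S$ rather than in aggregate: for $s\in S$ with $T(s)=\{u,v\}$, every $s'$ with $T(s')\cap\{u,v\}=\emptyset$ consumes a distinct down-neighbor of $s$, so $\deg_H(s)\le d(s)-2\le D-2$, equivalently $k_u+k_v-m_{uv}\ge |S|-D+2$ for every edge $uv$ of $K$. This is precisely Remark~\ref{remark-basic} and Claim~\ref{lem:degreeinH*}(1) in the paper, and it is the engine that kills the configuration above (the low-degree sixth vertex violates it), bounds $|C|$, and forces $K$ to be nearly balanced on at most six vertices. Your item (iii) --- that maximality of $S$ gives, for each $c\in C$, some $s\in S$ at distance greater than $2$ --- is true but is not the right lever; it plays no role in the actual argument. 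Finally, be aware that even with the local constraint in hand, the balanced $K_6$ configuration satisfies everything up to lower-order terms and nearly saturates your aggregate bound, so separating $\frac52D$ from $\frac52D+1$ requires exact control of the additive constants (this is why the paper's Section~\ref{section-proof} is long and ends with an integer-programming case analysis). ``Routine'' optimization over degree sequences will not deliver it.
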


Fix a positive integer $D$. Let $f(D)$ be the maximum size of a clique $S$ in the square of a
2-degenerate graph $G$ with $\Delta(G)\le D$. Then we have the following important property.

\begin{theorem}\label{thm:cymainthm}(\cite{CY},Theorem 5)
For every positive integer $D \geq 1729$,
some 2-degenerate
graph $G$ with $\Delta(G)\le D$ is nice w.r.t.~a clique $S$ in $G^2$ and $|S|\ge f(D)-60$.
\end{theorem}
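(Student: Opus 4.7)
The plan is to start from an extremal pair realizing $f(D)$ and surgically modify it into a nice one, preserving all but at most $60$ vertices of the clique. Fix a $2$-degenerate graph $G_0$ with $\Delta(G_0)\le D$ and a clique $S_0\subseteq V(G_0)$ in $G_0^2$ with $|S_0|=f(D)$, together with a $2$-degeneracy order $\sigma_0$ of $G_0$. Write $T=V(G_0)\setminus S_0$ and, for $t\in T$, set $d_S(t)=|N(t)\cap S_0|$. The target is a graph $G$ with $\Delta(G)\le D$ and a subset $S\subseteq S_0$ with $|S_0\setminus S|\le 60$ such that $(G,S)$ satisfies the three conditions of Definition~\ref{nice}.

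The first step is cosmetic: subdivide every edge $uv$ with $u,v\in S_0$ by inserting a new degree-$2$ vertex $z_{uv}$ adjacent to $u$ and $v$. This preserves $\Delta\le D$ and $2$-degeneracy---subdivision vertices can be placed at the head of any $2$-degeneracy order, since each has exactly $2$ back-neighbors---and it keeps $S_0$ a clique in the square while making $S_0$ an independent set in the modified graph. After this cleanup the only remaining obstruction is consecutivity: a vertex $s\in S_0$ may have many more than two $T$-neighbors, blocking a $2$-degeneracy order that places all of $S_0$ before all of $T$.

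The heart of the argument is to extract a bounded "hub" family $H\subseteq T$ whose role mimics the $K_5$-core of the Hocquard--Kim--Pierron construction. Every one of the $\binom{|S_0|}{2}$ pairs of $S_0$ must be witnessed either by a $z$-vertex (accounting for former $S_0$-edges) or by some $t\in T$ with $\{u,v\}\subseteq N(t)$, and each such $t$ witnesses at most $\binom{d_S(t)}{2}\le\binom{D}{2}$ pairs. Combining this count with the $2$-degeneracy of $G_0$---which restricts the edge density in $T\cup S_0$ and therefore limits how the witnesses can coexist---I would argue that at most five vertices of $T$ can have $d_S$-value close to $D$, and that these five already witness all but a constant number of $S_0$-pairs. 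I would then discard from $S_0$ the endpoints of any remaining uncovered pair, the $S_0$-vertices with more than two hub-neighbors, and further exceptional vertices needed to preserve the degree bound when new common neighbors are inserted. Each category contributes a bounded loss, and the budget is tuned so that the total is at most $60$. For every pair of surviving $S$-vertices that no longer share a neighbor, I would insert a fresh degree-$2$ vertex $z_{xy}$ adjacent to $x$ and $y$, exactly as in the construction.

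Finally, order the vertices of the resulting graph $G$ as (fresh $z$-vertices)~$<$~$S$~$<$~$H$~$<$~(remaining $T$-vertices), breaking ties inside each block by $\sigma_0$. Each $z$-vertex has both neighbors later; each $s\in S$ has at most its two hub-neighbors later; and the hubs and the tail inherit a $2$-degeneracy extension from $\sigma_0$. Thus $S$ is independent in $G$, appears consecutively in a $2$-degeneracy order, and remains a clique in $G^2$, with $|S|\ge f(D)-60$. The principal obstacle, and the place where the constant $60$ is pinned down, is the hub-extraction: showing that only a bounded residue of $S_0$-pairs escapes a size-$5$ hub family requires combining $\Delta(G_0)\le D$, $2$-degeneracy, and the extremality $|S_0|=f(D)$ in a single delicate case analysis, which I do not attempt in this sketch.
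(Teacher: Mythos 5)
Your proposal has a genuine gap, and it is exactly where you admit you stop: the ``hub extraction'' claim that at most five vertices of $T$ witness all but a constant number of $S_0$-pairs is not a lemma you can defer --- it \emph{is} the theorem. In fact the paper's main argument (Section~\ref{section-proof}) shows that the analogous set $T^*$ has size exactly $6$, not $5$, and establishing even the bound $|T^*|\le 6$ consumes the entire machinery of the auxiliary multigraph $H^*$ together with the assumption $s^*>\frac52 D-60$; none of that is available at the stage where Theorem~\ref{thm:cymainthm} must be proved. The paper's actual proof of this statement (Theorem 5 of \cite{CY}, reproduced in the appendix) takes a completely different route: it processes the vertices in a $2$-degeneracy order, distributes primary and secondary tokens, uses the global bound $\sum_v \tv(v)\le 6|S|$ to show that the exceptional set $\bi\cup\no\cup W$ has at most $60$ elements (via a discharging argument), and then obtains $(G^*,S^*,\sigma^*)$ by moving these few exceptional vertices to the ends of the order and deleting all edges with both endpoints outside $S^*$. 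No covering of pairs by hubs is ever needed.

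Two further steps of your sketch are unsound as written. First, subdividing every edge $uv$ with $u,v\in S_0$ can destroy the clique property in the square: if $u,w\in S_0$ are adjacent in $G_0^2$ only via a common neighbor $v\in S_0$, then after subdividing $uv$ and $wv$ the distance from $u$ to $w$ becomes $4$. Second, your repair --- inserting a fresh vertex $z_{xy}$ for every surviving pair that has lost its witness --- raises $d(x)$ by one for each such pair, and since $S_0$-vertices may already have degree close to $D$ there is no a priori control keeping $\Delta\le D$; discarding ``further exceptional vertices'' to fix this is not shown to cost only a bounded amount. The paper sidesteps both problems because it never adds vertices or edges: $G^*$ is a \emph{subgraph} of $G$, so $\Delta(G^*)\le D$ is automatic, and the clique property is preserved because only edges with both endpoints outside $S^*$ are deleted (Claim~\ref{claim:clique}).
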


Note that Theorem \ref{CY-thm} comes from Theorem \ref{thm:cynice} and Theorem \ref{thm:cymainthm}.
We also use Theorem \ref{thm:cymainthm} in our proof of the main theorem.  But, in Theorem~\ref{thm:cymainthm}, the authors of \cite{CY} assumed that $f(D)\ge \frac{5}{2}D+60$ which is slightly different from our assumption that $f(D)> \frac{5}{2}D$. So, we modify the values in calculations in the proof of Theorem \ref{thm:modify} and we have the following theorem.

\begin{theorem}\label{thm:modify}
For every positive integer $D \geq 1733$,
if a 2-degenerate
graph $G$ with $\Delta(G)\le D$ satisfies $\omega(G^2)=f(D)>\frac{5}{2}D$, then there is a subgraph $G^*$ of $G$ which is nice w.r.t.~a clique $S^*$ in ${(G^*)}^2$ and $|S^*|\ge f(D)-60$.
\end{theorem}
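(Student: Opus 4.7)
The plan is to follow the proof of Theorem~\ref{thm:cymainthm} in \cite{CY} almost verbatim, with two adjustments: first, to track carefully that every modification applied to the starting graph in that proof is in fact a deletion of vertices (or edges), so the output is automatically a subgraph of the input; second, to rerun the numerical inequalities under the weaker hypothesis $f(D)>\tfrac{5}{2}D$ in place of $f(D)\ge \tfrac{5}{2}D+60$, and verify that the threshold $D\ge 1733$ is sufficient to compensate for the lost slack of~$60$.

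Concretely, I would start with the hypothesized $G$ with $\Delta(G)\le D$, a 2-degeneracy order $\sigma$, and a maximum clique $S$ in $G^2$ of size $f(D)$. Following \cite{CY}, I would classify the vertices of $S$ according to their position in $\sigma$ and the structure of their $G$-neighborhoods (the \bi, \ba, \no{} roles suggested by the macros in the preamble), and then identify the obstructions to the three conditions of niceness in Definition~\ref{nice}: pairs of vertices of $S$ that are adjacent in $G$, vertices of $S$ that cannot be pushed into a consecutive block of $\sigma$ without violating 2-degeneracy, and vertices whose common neighbors in $G$ do not cover the requisite distance-2 connections. Deleting these obstructing vertices from $G$ (equivalently, restricting to the subgraph $G^*=G-X$ for the obstruction set $X$) and setting $S^*=S\setminus X$ yields a subgraph of $G$, and 2-degeneracy of $G$ is preserved under vertex deletion. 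The counting arguments of \cite{CY} bound $|X|\le 60$, which gives $|S^*|\ge f(D)-60$.

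The numerical step is where the modification is needed. In each case of \cite{CY} where the authors exploit the inequality $|S|\ge \tfrac{5}{2}D+60$ (typically by comparing the sum of $G$-degrees over $S$ against a budget proportional to $\Delta(G)\cdot|S|$, and then invoking the clique property in $G^2$), I would replace it by $|S|>\tfrac{5}{2}D$ and absorb the missing $60$ on the other side of the inequality by enlarging the lower bound on $D$. The smallest integer for which every one of these per-case inequalities continues to go through is what the authors compute to be $1733$; I would verify this case by case, so the weakest link determines the constant.

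The main obstacle is purely bookkeeping: there are several subcases in \cite{CY}, and in each I must confirm (a) that the operation used to produce the modified graph is a deletion rather than a gadget insertion---so the output really is a subgraph of $G$---and (b) that the inequality witnessing $|X|\le 60$ still holds with $\tfrac{5}{2}D$ in place of $\tfrac{5}{2}D+60$ on the input side, given $D\ge 1733$. I do not expect any conceptually new idea, since the proof skeleton is identical to \cite{CY}; the value $1733$ is simply the transparent bump $1729+4$ that appears once the constant $60$ is removed from the hypothesis and reintroduced through the bound on $D$.
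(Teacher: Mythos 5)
Your overall plan --- rerun the proof of Theorem~5 of \cite{CY} with the hypothesis $f(D)>\tfrac52 D$ in place of $f(D)\ge\tfrac52 D+60$ and absorb the lost slack into the lower bound on $D$ --- is exactly what the paper does; the appendix is an explicit transcription of that proof, and the numerical endpoint is the discharging estimate $|S\setminus S^*|\le 60+\tfrac{12\times 144}{D-4}<61$, which forces $D>1732$, i.e.\ $D\ge 1733$. So the strategy is the right one and matches the paper.

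However, your description of the construction of $(G^*,S^*)$ contains a genuine error that would sink the argument if carried out literally. You propose to ``delete the obstructing vertices,'' i.e.\ set $G^*=G-X$ and $S^*=S\setminus X$ for the obstruction set $X$. That is not what the proof does, and it cannot work: the obstruction set consists of the \textsc{Big} vertices (those holding many primary tokens), the \textsc{NonBasic} vertices of $S$, and their later neighbors $W$, and the \textsc{Big} vertices are precisely the common neighbors through which almost all of the $G^2$-adjacencies among $S^*$ pass (in the extremal construction they are the five hubs $u_1,\dots,u_5$, which later become $T^*$). Deleting them destroys the clique $S^*$ in $(G^*)^2$, so condition (1) of niceness fails. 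Deleting only the $\le 60$ vertices of $S\setminus S^*$ does not help either, since condition (3) --- a 2-degeneracy order in which $S^*$ is consecutive --- does not follow from vertex deletion alone. The actual construction keeps every vertex, \emph{reorders} $\sigma$ by moving \textsc{Big} after $S^*$ and $(\textsc{NonBasic}\cup W)\setminus\textsc{Big}$ before $S^*$, and deletes exactly the edges of $G$ with both endpoints outside $S^*$; it is these edge deletions (together with the fact that every \textsc{Basic} vertex has both later neighbors in \textsc{Big}) that make the reordered $\sigma^*$ a valid 2-degeneracy order while leaving $(G^*)^2[S^*]=G^2[S^*]$ intact. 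This reordering-plus-edge-deletion step is the one nontrivial idea of the construction, and your proposal replaces it with an operation that does not preserve the properties you need.
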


For completeness of the proof, we provide the proof of Theorem \ref{thm:modify} in the appendix.  However, we shall emphasize that the proof of Theorem \ref{thm:modify} is exactly that of Theorem 5 in \cite{CY}.  
The proof of Theorem~\ref{thm:modify} just follows the proof of 
Theorem 5 in \cite{CY}.

%%%%%%%%%%%%%%%%%%%%%%%%%%%%%%%%%%%%%%%%%%%%%%%%%%%%%%%%%%%%%%%%%%%%%%%%
%%%%%%%%%%%%%%%%%%%%%%%%%%%%%%%%%%%%%%%%%%%%%%%%%%%%%%%%%%%%%%%%%%%%%%%%

\section{Proof of Theorem~\ref{thm:Main}} \label{section-proof}
Let $D_0:=6\times(331\times 2+10\times{331\choose 2}+2000)$.
Suppose that for an integer $D \ge D_0$,
there exists a 2-degenerate graph $G$ with $\Delta(G) \leq D$ such that
$G^2$ has a clique $S$ of order $f(D)$ where $f(D)>\frac{5}{2}D$.
Throughout  this section,  let $G$ be a counterexample to Theorem \ref{thm:Main} minimizing $|V (G)|+|E(G)|$.

Let $\sigma$ be a vertex order witnessing that $G$ is 2-degenerate. Choose $\sigma$ so that the first vertex in $S$ appears as late as possible (each vertex has at most two neighbors later in the order $\sigma$).  Since $G$ is a minimal counterexample, every edge in $E(G)$ has an endpoint in $S$. Hence, by the proof of Theorem~\ref{thm:modify},  there is  a triple $(G^*,S^*,\sigma^*)$
obtained from $(G,S,\sigma)$ such that
 \begin{enumerate}[(a)]
 \item $S^*\subseteq S$ and  $|S\setminus S^*|\le 60$,
 \item $G^*$ is obtained from $G$ by deleting every edge of $G$  whose endpoints are both outside $S^*$,
 \item $S^*$ is a clique in $(G^*)^2$,
 \item $G^*$ is nice w.r.t. $S^*$ where the order $\sigma^*$ satisfies that all vertices of $S^*$ appear consecutively in $\sigma^*$, and
\item both of $V(G^*)\setminus S^*$ and $S^*$ are  independent sets in $G^*$.
 \end{enumerate}

Let $s^*:=|S^*|$.
\begin{remark}\label{sizeofS*}
Note that by Theorem~\ref{thm:cynice}, we have that $s^*\le \frac{5}{2}D$.  By (a) and our assumption that $|S|>\frac{5}{2}D$, we have that $s^*> \frac{5}{2}D-60$. Thus, $\frac{5}{2}D-60< s^*\le\frac{5}{2}D$.
\end{remark}
Note that throughout the paper, the bounds on $s^*$  are $\frac{5}{2}D-60< s^*\le\frac{5}{2}D$.

\medskip
Next, we define $R^*$ and $T^*$  as follows.
\begin{definition}
Let $T^*\subset V(G^*)$ be  the set of vertices which are vertices after $S^*$ in $\sigma^*$ and have at least one neighbor in $S^*$ in the graph $G^*$. Let $R^*=V(G^*)\setminus (S^*\cup T^*)$.
\end{definition}
\begin{remark}\label{rem:ScapV}
Observe that for each $v\in V(G)$, we have $N_G(v)\cap S^*=N_{G^*}(v)\cap S^*$. Furthermore, for each $v\in S^*$, we have that $N_{G^*}(v)=N_{G}(v)$. And since $G^*$ is 2-degenerate, by the definition of $R^*$, for each $v\in R^*$, we have that $|N_G(v)\cap S^*|=|N_{G^*}(v)\cap S^*|\le 2$.
\end{remark}

\medskip
\noindent {\bf [Outline of the proof Theorem \ref{thm:Main}]}
\begin{itemize}
\item If $f(D) > \frac{5}{2}D$, then we have that $|S \setminus S^*| \leq 60$ by Theorem~\ref{thm:modify}.  Also, by Theorem~\ref{thm:cynice}, we have that $|S^*|\le \frac{5}{2}D$.  So, our assumption is that  $\frac{5}{2}D -60<|S^*|\le \frac{5}{2}D$.
\item We establish an observation regarding the size of $T^*$ and show that $|T^*|  = 6$.
For this, we define an auxiliary multigraph, denoted  $H^*$, with $|S^*| = |E(H^*)|$ and $T^* = V(H^*)$.

\item If $|T^*| \geq 7$, then we show that $H^*$ has a very small degree vertex, which leads a contradiction.  This implies that $|T^*| \leq 6$.  Next, we show that
$|T^*| \geq 5$ by simple counting argument.  Finally, we show that if $|T^*| = 5$, then the graph $G$ has a vertex $v$ with $d_G(v) > D$, which is a contradiction.  Thus we conclude that
$|T^*| = 6 $.

\item Let $H_0^*$ be the underlying simple graph of $H^*$.
In Subsectin \ref{subsection-two}, we show that $H_0^*$ is a complete graph when $|T^*| = 6$.

\item From the fact that $H_0^*$ is a complete graph, we show that the multigraph $H^*$ is almost balanced. That is, each vertex $v \in T^*$ has $\frac{5}{6}D - c_1 \leq d_{H^*}(v) \leq \frac{5}{6}D + c_1'$, and  the multiplicity of each edge $uv$ in $H^*$ satisfies that $\mu(uv) \geq \frac{D}{6}-c_2$ where $c_1$, $c_1'$ and $c_2$ are small enough
compared to $D$.

\item For each $1 \leq i < j \leq 6$, let $S_{i,j}$ be the set of vertices in $S \setminus (S^*\cup \{v_i,v_j\})$ that are adjacent to neither $v_i$ nor $v_j$ where $v_i, v_j \in T^*$, and let $s_{i,j} = |S_{i, j}|$.
Next, from careful analysis, we obtain inequality~(\ref{counting}) after proving Claim \ref{size of S(ij)}.
$$\sum_{1\le i<j\le 6}s_{i,j}  \leq \left(\frac{5}{2}D-s^*\right)-30.$$
And then, we finish the proof of Theorem \ref{thm:Main} with careful analysis.
\end{itemize}

%%%%%%%%%%%%%%%%%%%%%%%%%%%%%%%%%%%%%%%%%%%%%%%%%%%%%%%%%%%%%%%%%%%%%%%%%%%%%%%%%%%
%%%%%%%%%%%%%%%%%%%%%%%%%%%%%%%%%%%%%%%%%%%%%%%%%%%%%%%%%%%%%%%%%%%%%%%%%%%%%%%%%%%
\subsection{Basic framework} \label{subsection-one}
In this subsection, we study the properties in $T^*$ and show that $|T^*| = 6$.
First, we show that  $T^*\neq \emptyset$ in Claim~\ref{claim:Tnotempty}. Then, we give a rough bound on the degree of vertices in the graph $G^*$ in Claim~\ref{claim:degvinT}.  Note that  the information given by Claim~\ref{claim:Tnotempty}  and Claim~\ref{claim:degvinT} is not sufficient to conclude that $|T^*|=6$. Therefore, we introduce an important
auxiliary multigraph, denoted  $H^*$, obtained from $G^*$ which satisfies that $V(H^*)=T^*$ and $|E(H^*)|=s^*$. Next, in Claim~\ref{lem:degreeinH*}, we investigate some properties of the multigraph $H^*$. Finally, using these  properties of the multigraph $H^*$, we show in Claim~\ref{size 6} that $ |T^*| = 6$.

\begin{claim}\label{claim:Tnotempty}
$T^*\neq \emptyset$.
\end{claim}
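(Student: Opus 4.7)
The plan is to argue by contradiction: assume $T^* = \emptyset$ and derive a contradiction with the lower bound $s^* > \tfrac{5}{2}D - 60$.

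First I would observe that under the assumption $T^* = \emptyset$, every neighbor (in $G^*$) of every vertex $v \in S^*$ must lie in $R^*$. Indeed, by property (e), $S^*$ is independent in $G^*$, so no neighbor of $v$ lies in $S^*$. And any neighbor of $v$ that comes after $S^*$ in $\sigma^*$ would, by definition, belong to $T^*$; since $T^* = \emptyset$, no such neighbor exists. Because $S^*$ appears consecutively in $\sigma^*$ by (d), the only possibility left is that all neighbors of $v$ lie in the initial segment $R^*$.

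Next, I would pick any vertex $v_0 \in S^*$ (say, the first one in $\sigma^*$) and use the clique property. Since $S^*$ is a clique in $(G^*)^2$ and independent in $G^*$, for each of the $s^*-1$ other vertices $u \in S^*$ there is a common neighbor $w_u \in V(G^*) \setminus S^*$ of $v_0$ and $u$. By the first paragraph, $w_u \in R^*$ and $w_u$ is adjacent to $v_0$. Now invoke Remark~\ref{rem:ScapV}: for every $w \in R^*$ we have $|N_{G^*}(w) \cap S^*| \le 2$. Since $w_u$ is already adjacent to $v_0 \in S^*$ and to $u \in S^*$, the two ``slots'' of $w_u$ in $S^*$ are saturated, so a single $w_u$ can serve as a common neighbor of $v_0$ with at most one other vertex of $S^*$. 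Hence the vertices $\{w_u : u \in S^* \setminus \{v_0\}\}$ are pairwise distinct.

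Putting these together gives $|N_{G^*}(v_0) \cap R^*| \ge s^* - 1$. On the other hand, Remark~\ref{rem:ScapV} also records $N_G(v_0) = N_{G^*}(v_0)$ for $v_0 \in S^*$, so $|N_{G^*}(v_0)| \le \Delta(G) \le D$. Combining,
\[
s^* - 1 \;\le\; |N_{G^*}(v_0) \cap R^*| \;\le\; D,
\]
which forces $s^* \le D + 1$. This contradicts Remark~\ref{sizeofS*}, which says $s^* > \tfrac{5}{2}D - 60$, as soon as $D \ge D_0$ (indeed $\tfrac{3}{2}D < 61$ fails for our $D_0$). There is no real obstacle: the argument is purely structural, and the only subtlety is remembering that Remark~\ref{rem:ScapV} caps each $R^*$-vertex's $S^*$-neighborhood at two, which is exactly what prevents a single $w$ from covering many pairs $(v_0, u)$.
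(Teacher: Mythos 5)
Your proposal is correct and follows essentially the same route as the paper: since $S^*$ is independent and $T^*=\emptyset$, every adjacency in $(G^*)^2[S^*]$ is realized through a vertex of $R^*$, each such vertex has at most two neighbors in $S^*$ and so accounts for at most one other clique-mate of a fixed $v_0\in S^*$, giving $s^*\le D+1$ and contradicting $s^*>\frac{5}{2}D-60$. The only cosmetic quibble is calling $R^*$ an ``initial segment'' of $\sigma^*$, which is not quite its definition, but this does not affect the argument.
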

\begin{proof}
Suppose that $T^*=\emptyset$. Recall that $S^*$ is an independent set in $G^*$, so each edge in $(G^*)^2[S^*]$ comes via a vertex in $R^*$.  But each vertex $v \in S^*$ has at most $D$ neighbors $w$ in $R^*$ and and each such $w$ creates an adjacency in $(G^*)^2$ between $v$ and at most one vertex $v'$ in $S^*$, since $w$ has at most tow neighbors in $S^*$.
Then by $(d)$, we have that $|S^*|\le D+1$.  This is a contradiction since $s^*>\frac{5}{2}D-60>D+1$.
\end{proof}

By analyzing the adjacencies between vertices in $T^*$ and $S^*$ in the graph $G^*$, we prove that for each vertex $v\in T^*$, the size of $N_{G^*}(v)\cap S^*$ is somehow large.  It helps us to prove that the order of $T^*$ is bounded by combining Theorem~\ref{thm:cynice} and the handshaking lemma.   We now prove the following claim.

\begin{claim}\label{claim:degvinT}
\begin{itemize}
\item [(1)] For each $v\in S^*$, $|N_{G^*}(v)\cap T^*|=2$.
\item [(2)] For each $v\in T^*$, $|N_{G^*}(v)\cap S^*|\ge \frac{D}{2}-57$.
\end{itemize}
\end{claim}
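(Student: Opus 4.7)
The plan is to prove the two parts of the claim in order, using three structural facts: that $G^*$ is bipartite with independent parts $S^*$ and $V(G^*)\setminus S^*$ (property (e) of the preliminary triple), that $\sigma^*$ is a 2-degeneracy order placing $S^*$ consecutively, and that $s^*>\frac{5}{2}D-60$ is very large compared to $D$.

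For part (1), the upper bound is immediate from 2-degeneracy: a vertex $v\in S^*$ has at most two later-in-$\sigma^*$ neighbors, which by independence of $S^*$ lie outside $S^*$, and by the definition of $R^*$ any post-$S^*$ vertex in $R^*$ has no neighbor in $S^*$, so these later neighbors must all lie in $T^*$. For the matching lower bound, I argue by contradiction, supposing $b:=|N_{G^*}(v)\cap T^*|\le 1$. Each of the $s^*-1$ vertices in $S^*\setminus\{v\}$ must share a common $G^*$-neighbor with $v$; such a witness in $R^*$ accounts for at most one other vertex of $S^*$ (by Remark \ref{rem:ScapV}, $|N_{G^*}(w)\cap S^*|\le 2$), while a witness in $T^*$ accounts for at most $D-1$. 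Combining with $|N_{G^*}(v)\cap R^*|\le D-b$ yields
\[
s^*-1\;\le\;(D-b)\cdot 1+b\cdot(D-1)\;\le\;2D-1\qquad(b\le 1),
\]
contradicting $s^*>\frac{5}{2}D-60$ for $D\ge D_0$; hence $b=2$.

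For part (2), the bipartite structure of $G^*$ gives $N_{G^*}(v)\subseteq S^*$ for every $v\in T^*$, so $d:=|N_{G^*}(v)\cap S^*|$ equals $d_{G^*}(v)$ and, by Remark \ref{rem:ScapV}, also equals $|N_G(v)\cap S^*|$. The plan is to derive a contradiction with the minimality of $G$ if some $v\in T^*$ has $d<\frac{D}{2}-57$. First, if $v\notin S$, then because every edge of $G$ has an endpoint in $S$, we have $N_G(v)\subseteq S$; combined with $|S\setminus S^*|\le 60$ this gives
\[
d_G(v)\;\le\;|N_G(v)\cap S^*|+|S\setminus S^*|\;\le\;d+60\;<\;\tfrac{D}{2}+3.
\]
I then plan to use this low degree of $v$ to build a strictly smaller 2-degenerate graph $\tilde G$ with $\Delta(\tilde G)\le D$ and a clique of size exceeding $\frac{5}{2}D$ in $\tilde G^{\,2}$, by deleting $v$ (and its at most $d+60$ incident edges) and patching any pair in $\binom{S}{2}$ whose unique common $G$-neighbor was $v$ using a cheap local modification (for instance, adding a few degree-$2$ witness vertices). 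The case $v\in S\setminus S^*$ is handled analogously, with the slack $|S\setminus S^*|\le 60$ absorbed into the patching budget.

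The main obstacle is this patching step in part (2): the set of pairs in $\binom{S}{2}$ whose sole common $G$-neighbor is $v$ can in principle be as large as $\binom{d_G(v)}{2}=O(D^2)$, so naively patching each lost pair is far too expensive to yield a strictly smaller counterexample. The plan is to exploit both the smallness $d_G(v)<\frac{D}{2}+3$ and the bipartite structure of $G^*$ together with the $60$-vertex slack $|S\setminus S^*|\le 60$ to show that most such pairs already retain an alternative witness, and that the residual repair admits a cheap local modification whose total cost is offset by the $1+d_G(v)$ savings from deleting $v$ and its incident edges. The constant $57=60-3$ in the statement is essentially forced by this slack together with a small additive budget for the repair.
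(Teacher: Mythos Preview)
Your argument for part (1) is correct and essentially matches the paper's: the upper bound follows from $2$-degeneracy and the fact that $T^*$ lies after $S^*$ in $\sigma^*$, and the lower bound follows from the same counting of $(G^*)^2$-adjacencies of $v$ through $R^*$ versus through $T^*$.

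Part (2), however, has a genuine gap. Your plan is to delete a low-degree $v\in T^*$ and repair the clique $S$ in the square of a strictly smaller graph, but you yourself note the obstacle: the number of pairs in $\binom{S}{2}$ whose only witness is $v$ can be $\Theta(D^2)$, and ``cheap local modification'' is not an argument. Nothing in the available structure (bipartiteness of $G^*$, the bound $|S\setminus S^*|\le 60$, or $d_G(v)<\tfrac{D}{2}+3$) forces most of those pairs to have alternative witnesses, and adding pendant degree-$2$ patches can inflate $|V|+|E|$ well beyond the $1+d_G(v)$ you save. The minimality route does not close.

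The paper's proof of (2) avoids all of this with a direct count that is the natural companion to your proof of (1). Since $v\in T^*$, pick any $u\in N_{G^*}(v)\cap S^*$. By part (1), $N_{G^*}(u)\cap T^*=\{v,w\}$ for some $w$, and hence $|N_{G^*}(u)\cap R^*|\le D-2$. Every vertex of $S^*\setminus\{u\}$ is reached from $u$ via some vertex of $R^*$, via $v$, or via $w$, so
\[
s^* \;\le\; |N_{G^*}(u)\cap R^*| + |N_{G^*}(v)\cap S^*| + |N_{G^*}(w)\cap S^*|
\;<\; (D-2) + \bigl(\tfrac{D}{2}-57\bigr) + (D-1) \;=\; \tfrac{5}{2}D - 60,
\]
contradicting $s^*>\tfrac{5}{2}D-60$. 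This is where the constant $57$ actually comes from: it is exactly $60-2-1$, arising from the three terms above, not from any ``patching budget''.
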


\begin{proof}
(1) Suppose that there is a vertex $v\in S^*$ such that $|N_{G^*}(v)\cap T^*|\le 1$.
First note that $\Delta(G^*)\le D$.
Then since $S^*$ is an independent set, $v$ has at most $D - 1$ neighbors in $R^*$ and each of these neighbors has at most one neighbor in $S^*$ other than $v$ since $G^*$ is 2-degenerate.
And if $w \in N_{G^*}(v)\cap T^*$, then $w$ has at most $D-1$ neighbors distinct from $v$ in $S^*$.  Thus,
by (c), we have that
\begin{eqnarray*}
s^* & = & |N_{{(G^*)}^2}(v)\cap S^*|+1 \leq |N_{G^*}(v)\cap R^*| + |N_{G^*}(w)\cap S^*| + 1 \\
& \le &   D-1 + D -1 + 1
 = 2D - 1 <\frac{5}{2}D-60,
 \end{eqnarray*}
a contradiction.

\medskip
(2) Suppose that there is a vertex $v\in T^*$ such that $|N_{G^*}(v)\cap S^*|<\frac{D}{2}-57$. Let $u$ be a vertex in  $N_{G^*}(v)\cap S^*$.   By (1), $u$ has at most one neighbor, say $w$, distinct from $v$ in $T^*$.  Then
by (c) and $\Delta(G^*)\le D$,
\begin{eqnarray*}
s^* & = & |N_{{(G^*)}^2}(u)\cap S^*|+1 \le |N_{G^*}(u)\cap R^*| + |N_{G^*}(v)\cap S^*| +
|N_{G^*}(w)\cap S^*| \\
& < &
D-2+\frac{D}{2}-57+D-1=\frac{5}{2}D-60,
\end{eqnarray*}
a contradiction.
\end{proof}

\medskip
In the following, we show that the size of  $T^*$ is bounded. We prove this by constructing an auxiliary mulitigraph $H^*$ obtained from $G^*$.

\begin{construction} \label{construction-H*} \rm
Form $H^*$ from $G^*$ as follows: delete all vertices of $R^*$ and contract one edge incident to each vertex of $S^*$.
\end{construction}

Note that, by Claim~\ref{claim:degvinT} and Construction~\ref{construction-H*}, the following holds.
\begin{center}
$V(H^*)=T^*$, $|E(H^*)|=|S^*|=s^*$, and $\Delta(H^*)\le D$.
\end{center}

We define $H^*_0$, $\bar{H^*_0}$, $E(v)$, and $\mu(uv)$ as follows.
\begin{definition}
Denote by $H^*_0$ the underlying simple graph of $H^*$ and denote by $\bar{H^*_0}$ the complement of $H^*_0$. For $v\in T^*$, let $E(v)$ be the set of edges in $H^*$ that is incident with $v$. For an edge $uv\in E(H^*)$, let $\mu(uv)$ be the miltiplicity of $uv$ in $H^*$.
\end{definition}

First, we show the following key observation.

\begin{remark} \label{remark-basic}
Since $\Delta(G^*) \le D$, for each $v\in S^*$, $v$ has at most $D-2$ neighbors in $R^*$ by Claim \ref{claim:degvinT} (1). Thus, the edge in $H^*$ arising from $v$ has a common endpoint with all edges of $H^*$ except for at most $D-2$ edges.
In other words, for every edge $uv$ in $H^*$, we have that
$|E(H^*)\setminus(E(u) \cup E(v))| \leq D-2$.
\end{remark}

\medskip
We proceed by analyzing the adjacency between vertices in $H^*$. First, we establish the following claim.

\begin{claim}\label{lem:degreeinH*}
The following holds.
\begin{enumerate}[(1)]
\item  For each $uv\in E(H^*)$, $d_{H^*}(u)+d_{H^*}(v)-\mu(uv)\ge s^*-D+2$.

\item For each $v\in T^*$, $|N_{H^*}(v)|\ge 2$.

\item For each $v\in T^*$ and $w\in N_{H^*}(v)$,
${\displaystyle \sum_{u\in N_{H^*}(v)\setminus\{w\}}\mu(uv)\ge s^*-2D+2}$.

\item For each $v\in T^*$,  $d_{H^*}(v)\ge \frac{|N_{H^*}(v)|(s^*-2D+2)}{|N_{H^*}(v)|-1 }$.
\end{enumerate}
\end{claim}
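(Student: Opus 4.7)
My plan is to prove the four parts in the order stated, using (1) as a foundational estimate from which (2)--(4) will follow quickly.

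For (1), I will apply the usual inclusion--exclusion identity to the edge (multi)sets $E(u)$ and $E(v)$: the only edges lying in both are the $\mu(uv)$ parallel edges between $u$ and $v$, so $|E(u) \cup E(v)| = d_{H^*}(u) + d_{H^*}(v) - \mu(uv)$. Remark~\ref{remark-basic} says that at most $D-2$ edges of $H^*$ avoid both $u$ and $v$, hence $s^* - |E(u)\cup E(v)| \le D - 2$, and rearranging gives the desired inequality.

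For (2), the first step is a bookkeeping identity arising directly from Construction~\ref{construction-H*} and Claim~\ref{claim:degvinT}(1): because each vertex of $S^*$ has exactly two $T^*$-neighbors in $G^*$ and contributes exactly one edge of $H^*$ incident to each of them, we have $d_{H^*}(v) = |N_{G^*}(v) \cap S^*|$ for every $v \in T^*$. In particular Claim~\ref{claim:degvinT}(2) gives $d_{H^*}(v) \ge \frac{D}{2} - 57 > 0$, so $|N_{H^*}(v)| \ge 1$, and the same identity shows $\Delta(H^*) \le D$. Now suppose for contradiction that $|N_{H^*}(v)| = 1$ with unique neighbor $w$; then $d_{H^*}(v) = \mu(vw)$, so applying (1) to the edge $vw$ yields $d_{H^*}(w) \ge s^* - D + 2$. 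Combined with $d_{H^*}(w) \le D$, this forces $s^* \le 2D - 2$, contradicting $s^* > \frac{5}{2}D - 60$ from Remark~\ref{sizeofS*} for all $D \ge D_0$.

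Part (3) is an immediate consequence of (1): the sum in question is exactly $d_{H^*}(v) - \mu(vw)$, and (1) together with $d_{H^*}(w) \le D$ gives $d_{H^*}(v) - \mu(vw) \ge (s^* - D + 2) - d_{H^*}(w) \ge s^* - 2D + 2$. Part (4) is then obtained by summing the inequality of (3) over all $w \in N_{H^*}(v)$: the left-hand side telescopes to $|N_{H^*}(v)| \cdot d_{H^*}(v) - \sum_{w \in N_{H^*}(v)} \mu(vw) = (|N_{H^*}(v)| - 1)\, d_{H^*}(v)$, and by (2) the denominator $|N_{H^*}(v)| - 1$ is positive, so dividing gives the stated bound. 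The only step that requires any care is (2), where the max-degree bound $\Delta(H^*) \le D$ must be combined with the asymptotic lower bound on $s^*$ to force a numerical contradiction; this is the one place where the hypothesis $D \ge D_0$ is invoked.
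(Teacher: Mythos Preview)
Your proof is correct and follows essentially the same route as the paper: part~(1) via inclusion--exclusion on $E(u)\cup E(v)$ combined with Remark~\ref{remark-basic}, part~(2) by observing that $|N_{H^*}(v)|=1$ forces $d_{H^*}(w)\ge s^*-D+2>D$, part~(3) by subtracting $d_{H^*}(w)\le D$ from~(1), and part~(4) by summing~(3) over $w\in N_{H^*}(v)$. One minor remark: the numerical contradiction in~(2) only needs $s^*>2D-2$, which follows from $s^*>\tfrac{5}{2}D-60$ whenever $D>116$, so invoking the full strength of $D\ge D_0$ there overstates what is required.
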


\begin{proof}
\begin{itemize}
\item [(1)]
By Remark~\ref{remark-basic}, we have that
\[D-2\ge |E(H^*)\setminus(E(u) \cup E(v))| = |E(H^*)| - (d_{H^*}(u) + d_{H^*}(v)-
\mu(vw)).\]
So,
$d_{H^*}(u)+d_{H^*}(v)-\mu(uv)\ge s^*-D+2$.

\item [(2)] Suppose, to the contrary, that there is a vertex $v$ in $T^*$  such that $|N_{H^*}(v)|=1$. Let $N_{H^*}(v) = \{u\}$. Then, by (1), we have that
\[
d_{H^*}(u)+d_{H^*}(v)-\mu(uv)=d_{H^*}(u)\ge s^*-D+2> \frac{5}{2}D - 60 - D +2 > D,
\] a contradiction since each vertex in $H^*$ has degree at most $D$.

\item [(3)] Let $v\in T^*$ and $w\in N_{H^*}(v)$. By (1), we have that
\[d_{H^*}(v)+d_{H^*}(w)-\mu(vw)=d_{H^*}(w)+\sum_{u\in N_{H^*}(v)\setminus\{w\}}\mu(uv)\ge s^*-D+2.\] Since $\Delta(H^*)\le D$, we have that ${\displaystyle \sum_{u\in N_{H^*}(v)\setminus\{w\}}\mu(uv)\ge s^*-2D+2}$.

\item [(4)] Let $v\in T^*$. By (3),  we have that
$$\begin{aligned}|N_{H^*}(v)|(s^*-2D+2)&\le\sum_{w\in N_{H^*}(v)} \sum_{u\in N_{H^*}(v)\setminus\{w\}}\mu(uv)\\
& =(|N_{H^*}(v)|-1)\sum_{w\in N_{H^*}(v)}\mu(vw)\\
&=(|N_{H^*}(v)|-1)d_{H^*}(v).
 \end{aligned}$$

 Therefore, $d_{H^*}(v)\ge \frac{|N_{H^*}(v)|(s^*-2D+2)}{|N_{H^*}(v)|-1}$.
\end{itemize}\end{proof}
Note that when $|T^*|=6$, we have that the lower bound on the degrees of vertices in $H^*$ given by Claim~\ref{lem:degreeinH*}(4) is better than that in Claim~\ref{claim:degvinT} as $D\ge D_0$ is large enough.
Now, we prove that  $|V(H^*)| = 6$.

\begin{claim}\label{size>6}
$H^*$ is a connected graph with $|T^*|\le6$.
\end{claim}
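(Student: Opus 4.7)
The plan has two parts: connectivity and the bound $|T^*|\le 6$. For connectivity, suppose $H^*$ has at least two components and fix edges $e_1 = uv$ and $e_2$ in distinct components $C_1$ and $C_2$. Since $C_1$ and $C_2$ share no vertices, every edge of $C_2$ lies in $E(H^*)\setminus (E(u)\cup E(v))$, which by Remark~\ref{remark-basic} has size at most $D-2$. Hence $|E(C_2)|\le D-2$ and, symmetrically, $|E(C_1)|\le D-2$, so $s^* = |E(H^*)| \le 2(D-2)$. Since $2(D-2) < \tfrac{5}{2}D-60$ whenever $D>112$, this contradicts $s^* > \tfrac{5}{2}D-60$ from Remark~\ref{sizeofS*}, so $H^*$ must be connected.

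For the bound $|T^*|\le 6$, write $t := |T^*|$ and assume $t\ge 7$ for contradiction. Choose $v_1\in T^*$ minimizing $d_{H^*}(v_1)$ and set $n := n_{v_1}$; by Claim~\ref{lem:degreeinH*}(2), $n\ge 2$. Summing Claim~\ref{lem:degreeinH*}(1) over the $n$ edges of $H_0^*$ incident to $v_1$ gives
\[
(n-1)\, d_{H^*}(v_1) + \sum_{u\in N_{H^*}(v_1)} d_{H^*}(u)\ \ge\ n(s^* - D + 2).
\]
Eliminating $\sum_{u\in N_{H^*}(v_1)} d_{H^*}(u)$ via the identity $\sum_{v \in T^*} d_{H^*}(v) = 2s^*$ together with the minimality bound $d_{H^*}(u)\ge d_{H^*}(v_1)$ for each of the $t-1-n$ vertices $u \notin N_{H^*}(v_1)\cup\{v_1\}$ yields the key inequality
\[
(t+1-2n)\, d_{H^*}(v_1)\ \le\ (2-n)s^* + n(D-2). \qquad (\star)
\]

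The endgame splits into three subcases by the sign of $t+1-2n$. If $t+1-2n>0$ (so $n\le t/2$), then $(\star)$ gives an upper bound on $d_{H^*}(v_1)$ that is incompatible with the lower bound $d_{H^*}(v_1)\ge n(s^*-2D+2)/(n-1)$ from Claim~\ref{lem:degreeinH*}(4) once $s^* > \tfrac{5}{2}D-60$ is substituted; the resulting leading-$D$ comparison reduces to $n(t+n-12)+10>0$, which is strict for every $n\ge 3$ and $t\ge 7$, and the boundary case $n=2$ is handled directly by comparing $d_{H^*}(v_1)\le 2(D-2)/(t-3)$ from $(\star)$ with $d_{H^*}(v_1)\ge 2(s^*-2D+2) > D-116$. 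If $t+1-2n=0$, then $(\star)$ becomes $s^*\le n(D-2)/(n-2)\le 2(D-2)$ (since $n\ge 4$ here), again violating $s^* > \tfrac{5}{2}D-60$. If $t+1-2n<0$, then $(\star)$ gives a lower bound on $d_{H^*}(v_1)$ whose leading $D$-coefficient strictly exceeds $5/t$, which violates the handshake bound $d_{H^*}(v_1)\le 2s^*/t\le 5D/t$. The main obstacle is not conceptual but bookkeeping: one must verify that every $D$-threshold forced by these subcases stays well below $D_0$, which is routine since each threshold is a small polynomial in the constants $58,60,116$ and is at most a few thousand, while $D_0 \approx 3.3\times 10^6$.
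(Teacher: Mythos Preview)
Your connectivity argument has a small gap: as written, the conclusion $s^* = |E(H^*)| \le 2(D-2)$ only follows if $H^*$ has exactly two components, and you did not cite Claim~\ref{lem:degreeinH*}(2) to guarantee that $C_1,C_2$ each contain an edge. Both are one-line fixes: by Claim~\ref{lem:degreeinH*}(2) there are no isolated vertices, and from $e_1=uv\in C_1$ one actually gets the stronger bound $|E(H^*)\setminus E(C_1)|\le D-2$, whence $|E(C_1)|\ge s^*-(D-2)$; combined with $|E(C_1)|\le D-2$ this gives $s^*\le 2(D-2)$ regardless of the number of components.

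Your argument for $|T^*|\le 6$ is correct and genuinely different from the paper's. The paper proceeds by a two-step iteration: it finds a vertex $v$ of degree at most $2s^*/7$, uses Claim~\ref{lem:degreeinH*}(1) to show all of its (at least four) neighbours lie in a ``big'' set $A$, then uses $|A|\ge 4$ to find a vertex $v'$ of even smaller degree, and finally produces a vertex violating the lower bound $d_{H^*}(u)\ge D/2-57$ of Claim~\ref{claim:degvinT}(2). You instead sum Claim~\ref{lem:degreeinH*}(1) over all $n$ simple edges at a minimum-degree vertex and combine with the handshake identity and minimality to obtain the single inequality $(\star)$, then split on the sign of $t+1-2n$. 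Your route is more systematic and avoids the auxiliary set $A$ and the appeal to Claim~\ref{claim:degvinT}(2), at the cost of a three-way case split whose threshold verifications you leave implicit. The paper's route is more hands-on but reaches the contradiction in fewer explicit cases. Both arguments ultimately exploit the same tension between Claim~\ref{lem:degreeinH*}(1), Claim~\ref{lem:degreeinH*}(4), and the average degree $2s^*/t$; yours packages that tension into one inequality, while the paper unpacks it through an iterative refinement. For a fully rigorous write-up you should either bound $t$ above (e.g.\ via Claim~\ref{claim:degvinT}(2) and handshaking, giving $t\le 10$ for large $D$) so that only finitely many $(n,t)$ pairs need checking, or note explicitly that the worst threshold across all admissible $(n,t)$ is about $D>230$, as your closing remark asserts.
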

\begin{proof}
First, we show that $H^*$ is a connected graph. Suppose that $H^*$ is disconnceted. By Claim~\ref{lem:degreeinH*}(2), $H^*$ has no isolated vertices. Thus, each component of $H^*$ contains at least one edge. Hence, there is a component $H'$ of $H^*$ which has at most $\frac{|E(H^*)|}{2}$ edges. Then there are at least $\frac{|E(H^*)|}{2}=\frac{s^*}{2}>D-2$ edges that are disjoint from edges in $H'$, contradicting Remark~\ref{remark-basic}.

Now, we show that $ |T^*|\le 6$. Suppose that $ |T^*|\ge 7$. Then there is a vertex 
$v\in T^*$ with
\[d_{H^*}(v)\le \frac{2|E(H^*)|}{|T^*|}\le\frac{2s^*}{7}<\frac{3(s^*-2D+2)}{2}<\frac{11}{14}D-60.\] Since $d_{H^*}(v)<\frac{3(s^*-2D+2)}{2}$, by Claim~\ref{lem:degreeinH*}(4), we have that $|N_{H^*}(v)|\ge 4$.

Let $A$ be the set of all vertices in $H^*$ that are of degree at least $\frac{11}{14}D-60$.
For each $w\in  N_{H^*}(v)$,  by Claim~\ref{lem:degreeinH*}(1), we have
\[d_{H^*}(w)\ge s^*-D+2-d_{H^*}(v)\ge s^*-D+2-\frac{2s^*}{7} > \frac{5}{7}\left(\frac{5}{2}D - 60 \right)  - D + 2  \geq \frac{11}{14}D-60.\]
It implies that $N_{H^*}(v) \subseteq A$. Thus, $|A| \geq |N_{H^*}(v)| \geq 4$. We proceed by show that if $|A|\ge 4$, then there is a vertex in $H^*$ with degree at most $\frac{2}{3}s^*-\frac{22}{21}D+80$ which is smaller than $\frac{2s^*}{7}$.

Note that by the handshaking lemma, we have $|A|<7$. Therefore, there is 
a vertex $v' \in T^*$(it may happen that $v'=v$) with 
\[\begin{aligned}d_{H^*}(v')&\le \frac{2|E(H^*)|-|A|(\frac{11}{14}D-60)}{|T^*|-|A|}\\
&\le\frac{2s^*-4(\frac{11}{14}D-60)}{3}\\
&=\frac{2}{3}s^*-\frac{22}{21}D+80\\
&<\frac{3(s^*-2D+2)}{2}\\
&<\frac{11}{14}D-60.
\end{aligned}
\]
Again, for each $w'\in  N_{H^*}(v')$,  by Claim~\ref{lem:degreeinH*}(1), we have that
\[d_{H^*}(w')\ge s^*-D+2-d_{H^*}(v')\ge s^*-D+2-\frac{2}{3}s^*+\frac{22}{21}D-80\ge \frac{s^*}{3}+\frac{D}{21}-78>\frac{11}{14}D-60.\]
Moreover, since $d_{H^*}(v')<\frac{3(s^*-2D+2)}{2}$, by Claim~\ref{lem:degreeinH*}(4), we have that $|N_{H^*}(v')|\ge 4$. Also, we deduce that $N_{H^*}(v') \subseteq A$. Note that by Claim~\ref{claim:degvinT}(2), we have that $d_{H^*}(v')\ge \frac{D}{2}-57$. Thus, there is a vertex $u\in T^*\setminus (A\cup \{v'\})$ with
\[\begin{aligned}d_{H^*}(u)&\le \frac{2s^*-4(\frac{s^*}{3}+\frac{D}{21}-78)-(\frac{D}{2}-57)}{2}\\
&\le \frac{1}{3}s^*-\frac{2}{21}D+2\times 78 -\frac{D}{4}+\frac{57}{2}\\
&\le \frac{1}{3}\times\frac{5}{2}D-\frac{2}{21}D-\frac{D}{4}+156 + \frac{57}{2} \\
&=\frac{41}{84}D+184.5  \\
&<\frac{D}{2}-57.
\end{aligned}\]
This is a contradiction to Claim~\ref{claim:degvinT}(2), and thus $|T^*| \leq 6$.
\end{proof}

\begin{claim}\label{size 6}
$H^*$ is a connected graph with $|T^*| =6$.
\end{claim}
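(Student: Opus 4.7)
By Claim~\ref{size>6}, $|T^*|\le 6$ and $H^*$ is connected; by Claim~\ref{claim:Tnotempty}, $T^*\ne\emptyset$. It remains to rule out $|T^*|\in\{1,2,3,4,5\}$, and the plan proceeds in two phases: a handshake argument for $|T^*|\le 4$ and a finer common-neighbour argument for $|T^*|=5$.

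For $|T^*|\le 4$, observe that by Claim~\ref{claim:degvinT}(1) each $u\in S^*$ has exactly two distinct $T^*$-neighbours, so the edge of $H^*$ produced from $u$ by Construction~\ref{construction-H*} joins two distinct vertices of $T^*$; in particular $H^*$ is loopless. Hence $2s^*=\sum_{v\in T^*} d_{H^*}(v)\le |T^*|\,\Delta(H^*)\le |T^*|\,D$, so $s^*\le |T^*|\,D/2\le 2D$. Combined with $s^*>\tfrac{5}{2}D-60$, this forces $D<120$, contradicting $D\ge D_0$.

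For $|T^*|=5$ the handshake only yields $s^*\le\tfrac{5}{2}D$, which is consistent with $s^*>\tfrac{5}{2}D-60$, so we must work harder; the goal is to exhibit a vertex $w\in V(G)$ with $d_G(w)>D$, contradicting $\Delta(G)\le D$. From $\sum_v d_{H^*}(v)=2s^*>5D-120$ with each term at most $D$, every $v\in T^*$ satisfies $d_{H^*}(v)=d_{G^*}(v)\in(D-120,D]$ (recall that a $T^*$-vertex has only $S^*$-neighbours in $G^*$), and the total ``slack'' $\sum_{v\in T^*}(D-d_{H^*}(v))=5D-2s^*$ is strictly less than~$120$. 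Since $s^*\le\tfrac{5}{2}D<|S|$, the set $S\setminus S^*$ is nonempty; fix $u\in S\setminus S^*\subseteq T^*\cup R^*$. Because $u$ lies in the $G^2$-clique $S$ and $S^*$ is independent in $G$, every $v\in S^*\setminus N_G(u)$ must share a common $G$-neighbour with $u$ inside $T^*\cup R^*$; such a common neighbour in $T^*$ covers at most $D$ vertices of $S^*$, while one in $R^*$ covers at most~$2$ by Remark~\ref{rem:ScapV}. The plan is to split on $u\in R^*$ and $u\in T^*$, write the resulting covering inequality in each case, and apply Claim~\ref{lem:degreeinH*}(1) in the form $d_{H^*}(v_i)+d_{H^*}(v_j)-\mu(v_iv_j)\ge s^*-D+2$ to convert the near-saturation of $H^*$ into sharp coverage estimates. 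In either subcase the covering inequality is tight enough that satisfying it forces either $u$ itself, one of its common neighbours in $T^*$, or one of its common neighbours in $R^*$ to absorb more $G$-edges than the $<120$ slack budget allows, producing a vertex $w\in V(G)$ with $d_G(w)>D$.

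The main obstacle is the delicate bookkeeping in the $|T^*|=5$ case: because the handshake is saturated to within~$60$, the covering inequality must be tracked with some care, and the two subcases $u\in T^*$ and $u\in R^*$ have to be combined with the $2$-per-$R^*$-vertex cap together with Claim~\ref{lem:degreeinH*}(1) to squeeze out the desired excess-degree vertex.
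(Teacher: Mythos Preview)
Your handshake argument for $|T^*|\le 4$ is fine and matches the paper. The gap is in the $|T^*|=5$ case.

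Your plan fixes a single $u\in S\setminus S^*$ and hopes its covering constraints force some vertex to exceed degree $D$. This fails as stated. Write $a=|N_G(u)\cap T^*|$, $b=|N_G(u)\cap R^*|$, $c=|N_G(u)\cap S^*|$; your covering inequality gives only $s^*\le c+aD+2b$, and with $a+b+c\le D$ this allows $s^*\le 3D-2$ even when $a\le 1$. Claim~\ref{lem:degreeinH*}(1) does not sharpen this: it controls overlaps between $T^*$-vertices' $S^*$-neighbourhoods, but the slack here is in the term $2b$, not in the $T^*$-coverage. So a single $u$ with one $T^*$-neighbour and $\approx D$ neighbours in $R^*$, each seeing two $S^*$-vertices, would satisfy your inequality without overloading anything.

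The paper closes this with two ideas you are missing. First, it proves that for $u\in S\setminus S^*$ with $|N_G(u)\cap T^*|\le 1$, the number of $R^*$-neighbours $w$ of $u$ with $|N_G(w)\cap S^*|=2$ is at most $2(|T^*|+1)$; this uses the $2$-degeneracy order $\sigma$ to trace a path $u_1w_1wu$ with $u_1\in T^*$ appearing earliest, and then bounds such paths by the later-neighbour cap along $\sigma$. With this bound the covering inequality collapses to $s^*\le 2D+O(1)$, forcing $|N_G(u)\cap T^*|\ge 2$ for \emph{every} $u\in S\setminus S^*$. Second, the contradiction is \emph{global}, not local: summing the $\ge 2$ $T^*$-edges over all $u\in S\setminus S^*$ gives
\[
\sum_{w\in R^*\cup T^*}|N_G(w)\cap T^*|\ \ge\ 2|S\setminus S^*|\ >\ 5D-2s^*\ =\ \sum_{v\in T^*}\bigl(D-d_{H^*}(v)\bigr),
\]
while the left side is exactly $\sum_{v\in T^*}(d_G(v)-d_{H^*}(v))$, so $\sum_{v\in T^*}d_G(v)>5D$ and pigeonhole yields your excess-degree vertex. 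Your ``slack budget'' intuition is right, but it only bites after summing over all of $S\setminus S^*$, and that sum is only useful once you have the per-vertex lower bound $|N_G(u)\cap T^*|\ge 2$.
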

\begin{proof}
        By Claim~\ref{size>6}, it suffices to show that  $|T^*|> 5$. Note that each vertex in $H^*$ is of degree at most $D$. So, we have that
$$D|T^*|\ge \sum_{v  \in T^*}d_{H^*}(v)=2s^*,
$$ which implies that $|T^*|\ge 5.$  Thus, $5 \leq |T^*| \leq 6$.

\medskip
Next, we show that $|T^*| = 6$.  We will show that there exists a contradiction if $|T^*| = 5$. From now on, we suppose that
$|T^*|= 5$.
Recall that  by Remark~\ref{rem:ScapV}, we have that $\sum_{w\in S^*}|N_{G}(w)\cap T^*|=\sum_{w\in S^*}|N_{G^*}(w)\cap T^*|=2s^*$. Thus,
\[\begin{aligned}\sum_{v\in T^*}d_G(v)&=\sum_{w\in S^*}|N_{G}(w)\cap T^*|+\sum_{w\in R^*\cup T^*}|N_{G}(v)\cap T^*|\\
&=2s^*+\sum_{w\in R^*\cup T^*}|N_{G}(v)\cap T^*|\\
&<5D,
\end{aligned}
\]
which implies that 
\begin{equation} \label{S-minus-S-start}
\sum_{w\in R^*\cup T^*}|N_{G}(v)\cap T^*|<5D-2s^*=2\left(\frac{5}{2}D-s^* \right)<2|S\setminus S^*|.
\end{equation}
If we can show that 
\[2|S\setminus S^*|\le\sum_{w\in R^*\cup T^*}|N_{G}(v)\cap T^*|,\] 
then we can conclude that $|T^*|=6$ since by inequality (\ref{S-minus-S-start}), we have the following contradiction 
\[2|S\setminus S^*|\le \sum_{w\in R^*\cup T^*}|N_{G}(v)\cap T^*|<2|S\setminus S^*|.
\]  
We proceed by showing an even stronger version that $|N_G(v)\cap T^*|\ge 2$ for each $v\in S\setminus S^*$. We formalize this as a subclaim.

\medskip

\begin{subclaim} \label{claim-T}
If $v\in S\setminus S^*$, then $|N_G(v)\cap T^*|\ge 2$.
\end{subclaim}
\begin{proof}
    Suppose that there is a vertex $v\in S\setminus S^*$ such that $|N_G(v)\cap T^*|\le 1$. Recall that Remark~\ref{rem:ScapV}, for each $u\in R^*$, we have that $|N_{G}(u)\cap S^*|=|N_{G^*}(u)\cap S^*|\le 2$. For $i\in [2]$, let $Y_i:=\{w\in N_{G}(v)\setminus T^*:~|N_{G}(w)\cap S^*|=i\}$. We claim that $|Y_2|\le 2(|T^*|+1)$.

Since $G$ is 2-degenerate, there are at most two vertices in $N_G(v)\setminus T^*$ that are after $v$ in the order $\sigma$.  We define $X_1$ and $X_2$ as follows.
\begin{eqnarray*}
X_1 &:= & \{w\in N_{G}(v)\setminus T^*: ~|N_{G}(w)\cap S^*|=2~\text{and}~v \text{~is later than}~w~\text{in the order}~\sigma\}, \mbox{ and } \\
X_2 & := & \{w\in N_{G}(v)\setminus T^*: ~|N_{G}(w)\cap S^*|=2~\text{and}~w \text{~is later than}~v~\text{in the order}~\sigma\}.  
\end{eqnarray*}
Then $Y_2=X_1\cup X_2$ and $|X_2|\le 2$. Thus, it suffices to show that $|X_1|\le 2|T^*|$.
 \begin{figure}[htbp]
  \begin{center}
  \includegraphics[scale=0.4]{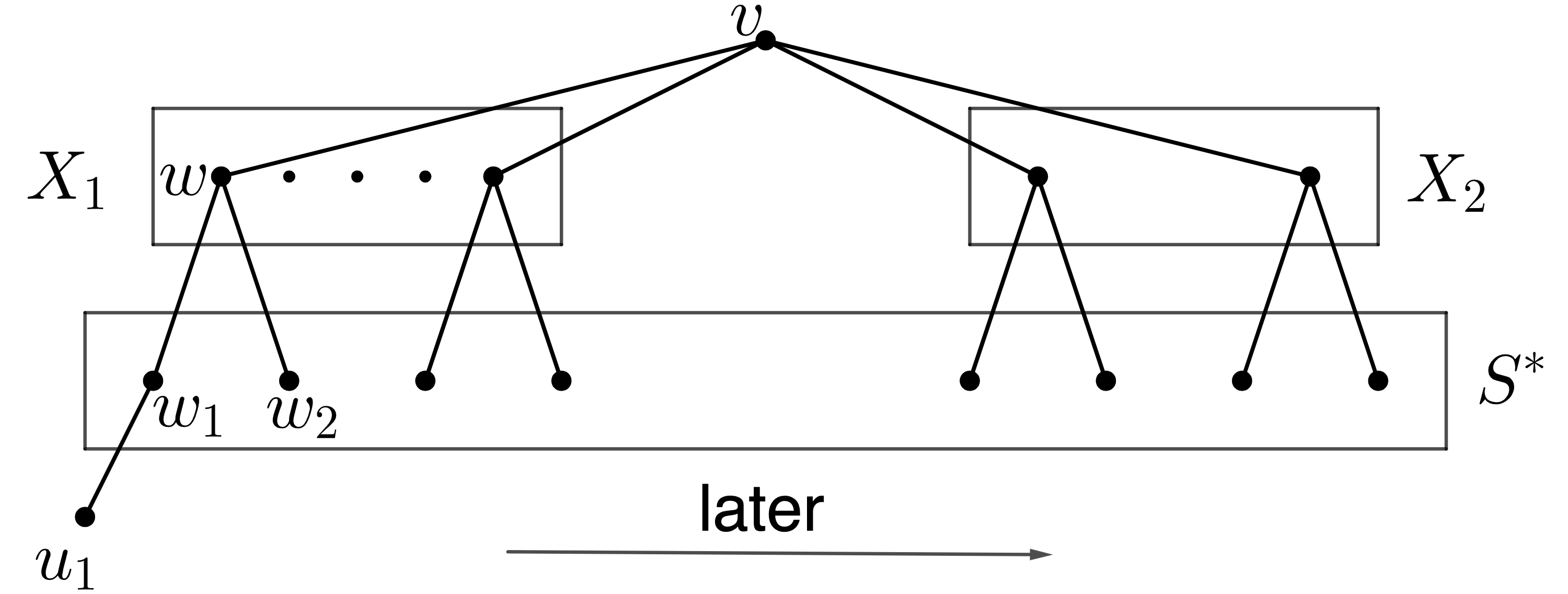}
  \end{center}
  \caption{The occurrences of partial vertices  of $G$ {\color{red}  w.r.t~$\sigma$.}}\label{Y2}
\end{figure}

For $w\in X_1$, let $\{w_1,w_2\}=N_{G^*}(w)\cap S^*$. Then by Remark~\ref{rem:ScapV} with $w\in R^*$, we have that $\{w_1,w_2\}= N_G(w)\cap S^*$.
Since $\sigma$ is a 2-degeneracy order of $G$ and $v$ is later than $w$ in $\sigma$, without loss of generality, say $w_1$ is before $w$ in the order $\sigma$. Since $w_1\in S^*$, we have that $|N_{G^*}(w_1)\cap T^*|=2$ by Claim~\ref{claim:degvinT} (1). Let  $\{u_1,u_2\}= N_{G^*}(w_1)\cap T^*$.  Again  by Remark~\ref{rem:ScapV} with $w_1\in S^*$, we have that $\{u_1,u_2\}= N_G(w_1)\cap T^*$.
Since $\sigma$ is a 2-degeneracy order of $G$ and $w$ is later than $w_1$ in $\sigma$, without loss of generality, say that $u_1$ is  before $w_1$ in the order $\sigma$ of $G$. Hence, for each $w\in X_1$, there is a path $P_w:=u_1w_1wv$ contained in $G$ where the occurrences of $u_1,w_1,w,v$ in the path $P_w$ are in the same order as their occurrences in the order $\sigma$ of $G$ where $u_1\in T^*$,  as shown in Figure~\ref{Y2}.

Since $\sigma$ is a 2-degeneracy order of $G$, for each vertex $x$ in $T^*$, there are at most two vertices later than $x$ in the order $\sigma$.  Thus, $|X_1| \le 2|T^*|$. Hence, $ |Y_2|\le |X_1|+|X_2|\le 2(|T^*|+1)$.

Since $v\in S\setminus S^*$ and $|N_{G}(v)\cap T^*|\le 1$, we have that  $$
\begin{aligned}
s^*&=|S^*|=|N_{G^2}(v)\cap S^*|\\
&= |N_G(v)\cap S^*|+|Y_1|+2|Y_2|+\left|\bigcup_{u\in N_{G}(v)\cap T^*}( N_{G}(u)\cap S^* ) \right|\\
&=(|N_G(v)\cap S^*|+|Y_1|+|Y_2|)+|Y_2|+\left|\bigcup_{u\in N_{G}(v)\cap T^*}( N_{G}(u)\cap S^* ) \right|\\
&\le d_{G}(v)+|Y_2|+\left|\bigcup_{u\in N_{G}(v)\cap T^*}( N_{G}(u)\cap S^* ) \right|\\
\end{aligned}$$
$$\begin{aligned}
&\le d_{G}(v)+2(|T^*|+1)+ \left|\bigcup_{u\in N_{G}(v)\cap T^*}( N_{G}(u)\cap S^*)  \right|\quad\quad\quad\quad\quad~\\
&\le D+14+D\\
&<\frac{5}{2}D-60,
\end{aligned}$$
 a contradiction.  This completes the proof of Claim \ref{claim-T}.
 \end{proof}

Therefore, $|T^*|=6$. This completes the proof of Claim \ref{size 6}.
\end{proof}

%%%%%%%%%%%%%%%%%%%%%%%%%%%%%%%%%%%%%%%%%%%%%%%%%%%%%%%%%%%%%%%%%%%%%%%%%%%%%%%%%%%
%%%%%%%%%%%%%%%% H^*_0 is complete graph %%%%%%%%%%%%%%%%%%%%%%%%%%%%%%%%%%%%%%%%%%
\subsection{$H^*_0$ is a complete graph if $|T^*|=6$} \label{subsection-two}
In this subsection, we prove that if the size of $T^*$ is 6, then the graph $H^*_0$ is a complete graph. This helps us to further analyze the degree of each vertex in the graph $H^*_0$. We first establish two useful claims, Claim~\ref{lem:degreeV=6} and Claim~\ref{claim:edgesum}, which are respectively about the minimum degree of the  graph $H^*_0$ and the sum of multiplicities of edges between some special vertices of $H^*$. Then, by using these two claims and constructing some multigraphs based on $H^*_0$, we prove that $\bar{H^*_0}$ is triangle free in Claim~\ref{lem:H0trianglegfree}. Lastly, we finish the proof that $H^*_0$ is a complete graph in Claim~\ref{lem:H0complete}. \\

Let $T^*=\{v_1,\ldots,v_6\}$ in this  subsection and the next subsection.\\

To enhance the clarity of the proof of Claim~\ref{lem:degreeV=6}, we begin by providing a general outline before delving into the formal proof. We assume that $N_{H^*_0}(v_1) = \{v_2, v_6\}$. By Claim~\ref{lem:degreeinH*}(4), we deduce that $d_{H^*}(v_1)\ge 2s^*-4D+4$, which is close to $D$. Consequently, the adjacencies between vertices in $T^*\setminus{v_1}$ are somewhat constrained. First, we show that the set $\{v_3, v_4, v_5\}$ cannot form an independent set in $H^*_0$ when $d_{H^*_0}(v_1)=2$. Next, we show that $\{v_3, v_4, v_5\}$ induces a triangle in $H^*_0$ under this condition. And then, we finish the proof in Claim \ref{lem:H0complete}.

\begin{Claim}\label{lem:degreeV=6}
For each $v\in T^*$, $d_{H^*_0}(v)\ge 3$.
\end{Claim}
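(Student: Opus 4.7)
Since Claim~\ref{lem:degreeinH*}(2) already gives $d_{H^*_0}(v) = |N_{H^*}(v)| \ge 2$ for each $v \in T^*$, it suffices to rule out the case $d_{H^*_0}(v) = 2$.  I will assume for contradiction that some vertex, say $v_1$, satisfies $d_{H^*_0}(v_1) = 2$ with $N_{H^*_0}(v_1) = \{v_2, v_6\}$.  Applying Claim~\ref{lem:degreeinH*}(4) to $v_1$ with $|N_{H^*}(v_1)| = 2$ yields
\[
d_{H^*}(v_1) \ge 2(s^*-2D+2) > D - 116,
\]
using $s^* > \tfrac{5}{2}D - 60$, and Claim~\ref{lem:degreeinH*}(3) then forces $\mu(v_1 v_2), \mu(v_1 v_6) > \tfrac{D}{2} - 58$.

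\textbf{Step A (non-independence).}  I will first show $\{v_3, v_4, v_5\}$ is not independent in $H^*_0$.  If it were, every $H^*$-edge incident to $v_3, v_4, v_5$ would land on $\{v_2, v_6\}$, forcing $|N_{H^*}(v_i)| = 2$ and hence $d_{H^*}(v_i) > D - 116$ for $i \in \{3,4,5\}$ by Claim~\ref{lem:degreeinH*}(4); since all these edges are counted in $d_{H^*}(v_2) + d_{H^*}(v_6)$, this gives $d_{H^*}(v_2) + d_{H^*}(v_6) \ge d_{H^*}(v_1) + \sum_{i=3}^{5} d_{H^*}(v_i) > 4(D-116)$, contradicting the handshake bound $d_{H^*}(v_2) + d_{H^*}(v_6) \le 2D$ since $D \ge D_0$.

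\textbf{Step B (triangle).}  I then force $\{v_3, v_4, v_5\}$ to be a triangle.  Relabeling, assume $v_3 v_4 \in E(H^*_0)$ and, for contradiction, $v_3 v_5 \notin E(H^*_0)$.  \emph{Subcase} $v_4 v_5 \notin E(H^*_0)$: then $v_5$ mirrors $v_1$ with $d_{H^*}(v_5) > D-116$, and Claim~\ref{lem:degreeinH*}(3) applied to $v_3$ and $v_4$ (each with $w$ the triangle-partner) gives $\mu(v_2 v_3) + \mu(v_3 v_6) + \mu(v_2 v_4) + \mu(v_4 v_6) > D - 116$; hence $d_{H^*}(v_2) + d_{H^*}(v_6) \ge d_{H^*}(v_1) + d_{H^*}(v_5) + (D-116) > 3(D-116)$, contradicting $\le 2D$.  \emph{Subcase} $v_4 v_5 \in E(H^*_0)$: then $N_{H^*}(v_3), N_{H^*}(v_5) \subseteq \{v_2, v_4, v_6\}$; summing Claim~\ref{lem:degreeinH*}(1) over the four edges $v_1 v_2, v_1 v_6, v_3 v_4, v_4 v_5$ and simplifying via $\sum_v d_{H^*}(v) = 2 s^*$ forces $\mu(v_2 v_4) + \mu(v_4 v_6) > D - 112$, and combining with Claim~\ref{lem:degreeinH*}(3) on $v_3$ and $v_5$ (with $w = v_4$) pushes $d_{H^*}(v_2) + d_{H^*}(v_6) > 3D - 344$, again contradicting $\le 2D$.

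\textbf{Step C (final contradiction).}  Knowing $\{v_3, v_4, v_5\}$ is a triangle, set $M := \mu(v_3 v_4) + \mu(v_4 v_5) + \mu(v_3 v_5)$ and let $N$ count (with multiplicity) the $H^*$-edges between $\{v_3, v_4, v_5\}$ and $\{v_2, v_6\}$, so that $s^* = d_{H^*}(v_1) + \mu(v_2 v_6) + M + N$.  Summing Claim~\ref{lem:degreeinH*}(1) over the five edges $v_1 v_2, v_1 v_6, v_3 v_4, v_4 v_5, v_3 v_5$ and using $\sum_v d_{H^*}(v) = 2 s^*$, the left-hand side rearranges to $3 s^* - d_{H^*}(v_1) - \mu(v_2 v_6)$, so
\[
d_{H^*}(v_1) + \mu(v_2 v_6) \le 5D - 10 - 2 s^* < 110,
\]
contradicting $d_{H^*}(v_1) > D - 116$.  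The hard part will be the Case B2 bookkeeping in Step B, which requires pulling the bound $\mu(v_2 v_4) + \mu(v_4 v_6) > D - 112$ out of the four-edge sum; Steps A and C are short once the initial lower bound $d_{H^*}(v_1) > D - 116$ and the handshake estimate on $\{v_2, v_6\}$ are in place.
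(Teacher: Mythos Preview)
Your proof is correct.  The overall three–step architecture (non-independence of $\{v_3,v_4,v_5\}$, then triangle, then final contradiction) matches the paper's, but the mechanics differ.  The paper leans primarily on Remark~\ref{remark-basic} (the ``at most $D-2$ edges disjoint from a given edge'' bound): in its Step~B it repeatedly bounds the edges avoiding $v_3v_4$ and then $v_4v_5$, and in its Step~C it uses a pigeonhole on $\mu(v_2v_3),\mu(v_2v_4),\mu(v_2v_5)$ to find one large multiplicity and then shows that $d_{H^*}(v_1)+\mu(v_2v_3)$ exceeds the $D-2$ cap for edges avoiding $v_4v_5$.  You instead make the handshake-type estimate $d_{H^*}(v_2)+d_{H^*}(v_6)\le 2D$ the workhorse throughout Steps~A and~B, feeding it lower bounds assembled from Claim~\ref{lem:degreeinH*}(3) and~(4); and your Step~C replaces the paper's pigeonhole by a clean five-edge summation of Claim~\ref{lem:degreeinH*}(1) that telescopes (via $\sum_v d_{H^*}(v)=2s^*$ and $d_{H^*}(v_1)=\mu(v_1v_2)+\mu(v_1v_6)$) to $d_{H^*}(v_1)+\mu(v_2v_6)\le 5D-10-2s^*<110$, which is arguably tidier than the paper's endgame.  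Your Step~B2 bookkeeping (the four-edge sum yielding $\mu(v_2v_4)+\mu(v_4v_6)\ge 2s^*-4D+8$) checks out once one uses $N_{H^*}(v_4)\subseteq\{v_2,v_3,v_5,v_6\}$ to rewrite $d_{H^*}(v_4)-\mu(v_3v_4)-\mu(v_4v_5)$.
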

\begin{proof}
Without loss of generality, by Claim~\ref{lem:degreeinH*}(2), we may assume that  $d_{H^*_0}(v_1)=2$ and $N_{H^*_0}(v_1)=\{v_2,v_6\}$. Then by Claim~\ref{lem:degreeinH*}(4), $d_{H^*}(v_1)\ge 2s^*-4D+4$.  We first prove that  $\{v_3v_4,v_4v_5,v_3v_5\}\subseteq E(H^*_0)$.

Suppose that $\{v_3,v_4,v_5\}$ is an independent set. Then  $d_{H^*_0}(v_i)=2$ for each $i\in \{3,4,5\}$. Thus, by Claim~\ref{lem:degreeinH*}(4),  for each $i\in \{3,4 ,5\}$ and $j\in \{2,6\}$, $v_iv_j\in E(H^*_0)$ and $d_{H^*}(v_i)\ge 2s^*-4D+4$. Hence,
from $\sum_{v \in T^*}d_{H^*} (v) = 2{s^*}$, we have that
\begin{equation} \label{equation-v2v6}
d_{H^*}(v_2)+d_{H^*}(v_6)\le 2s^*-4(2s^*-4D+4) = 16D - 6s^* - 16 < s^*-D+2.
\end{equation}
 Recall that by Claim~\ref{lem:degreeinH*}(1), for each $uv\in E(H^*)$, we have that $d_{H^*}(u)+d_{H^*}(v)-\mu(uv)\ge s^*-D+2$.
Since $d_{H^*}(v_2)+d_{H^*}(v_6)< s^*-D+2$,
we have that $v_2v_6\notin E(H^*_0)$ which implies that $d_{H^*_0}(v_2)=d_{H^*_0}(v_6)=4$. On the other hand, from inequality~(\ref{equation-v2v6}) above,  there exists $i\in \{2,6\}$ such that $d_{H^*}(v_i) \leq 8D - 3s^* - 8$. Since $d_{H^*_0}(v_i)=4$, by Claim~\ref{lem:degreeinH*}(4), $d_{H^*}(v_i) \geq \frac{4(s^* - 2D+2)}{3}$, which is a contradiction 
since $\frac{4(s^* - 2D+2)}{3} > 8D - 3s^* - 8$.

\medskip
Therefore, $\{v_3,v_4,v_5\}$ is not an independent set in $H^*_0$. So, without loss of generality, assume that $v_3v_4\in E(H^*_0)$.  Then by Remark~\ref{remark-basic},
$$\mu(v_5v_2)+\mu(v_5v_6)+\mu(v_2v_6)+d_{H^*}(v_1)\le D-2.$$
Thus,
\begin{equation}\label{equa:v526}
\mu(v_5v_2)+\mu(v_5v_6)\le D-2-d_{H^*}(v_1)\le  D-2-(2s^*-4D+4).
\end{equation}
Observe that $d_{H^*_0}(v_5)\le 4$ since  $v_5v_1\notin E(H^*_0)$. Thus, by Claim~\ref{lem:degreeinH*}(4),  we have that $d_{H^*}(v_5)\ge \frac{4}{3}(s^*-2D+2)$.

Since $\mu(v_5v_2)+\mu(v_5v_6)\le  D-2-(2s^*-4D+4)<  \frac{4}{3}(s^*-2D+2)<d_{H^*}(v_5)$,
we have that $N_{H^*_0}(v_5)\cap \{v_3,v_4\}\neq\emptyset$. Without loss of generality, say $v_5v_4\in E(H^*_0)$. Then by Remark~\ref{remark-basic}, we have that 
$$\mu(v_3v_2)+\mu(v_3v_6)+\mu(v_2v_6)+d_{H^*}(v_1)\le D-2.$$
Thus,
\begin{equation}\label{equa:v236}
\mu(v_3v_2)+\mu(v_3v_6)+\mu(v_2v_6)\le  D-2-(2s^*-4D+4).
\end{equation}
If $v_3v_5\notin E(H^*_0)$, then by inequalities~(\ref{equa:v526}) and ~(\ref{equa:v236}) 
$$\begin{aligned}
&d_{H^*}(v_4)+d_{H^*}(v_1)+2(5D-2s^*-6) \\
&=d_{H^*}(v_4)+d_{H^*}(v_1)+2(D-2-(2s^*-4D+4)) \\
&\ge  d_{H^*}(v_4)+d_{H^*}(v_1)+\mu(v_2v_3)+\mu(v_3v_6)+\mu(v_2v_6)+\mu(v_5v_2)+\mu(v_5v_6)\\
&=s^*>{\frac{5}{2}D-60}.\\
\end{aligned}
$$
Thus, $d_{H^*}(v_4)>D$ or $d_{H^*}(v_1)>D$ since $2(5D-2s^*-6) < 2(5D - 2 (\frac{5}{2}D - 60) - 6) = 228$.  This is a contradiction.
Therefore, $\{v_3v_4,v_4v_5,v_5v_3\}\subseteq E(H^*_0)$.

\medskip
Since $d_{H^*_0}(v_1)=2$ and $\{v_3v_4,v_4v_5,v_5v_3\}\subseteq E(H^*_0)$, by Remark~\ref{remark-basic}, we have that $\mu(v_2v_6)+d_{H^*}(v_1)\le D-2$ which implies that $\mu(v_2v_6)\le D-2-d_{H^*}(v_1)\le 5D-2s^*-6$.

By Claim~\ref{lem:degreeinH*}(1) with $v_1v_2\in E(H^*_0)$, we have that $$\begin{aligned}\mu(v_2v_3)+\mu(v_2v_4)+\mu(v_2v_5)&\ge s^*-D+2-d_{H^*}(v_1)-\mu(v_2v_6)\\
&\ge s^*-D+2-(D-2)\\
&>\frac{D}{2}-60.
\end{aligned}$$
Then, without loss of generality, say $$\mu(v_2v_3)\ge \frac{\frac{D}{2}-60}{3}>\frac{D}{7}.$$
It implies that
$$\mu(v_1v_6)+\mu(v_1v_2)+\mu (v_2v_3)=d_{H^*}(v_1)+\mu(v_2v_3)\ge 2s^*-4D+4+\frac{D}{7}>D-2,$$
which leads to a contradiction since all of these edges are not adjacent with $v_4v_5$ in $H^*$.
Therefore, each vertex in $H^*_0$ has degree at least 3.
\end{proof}
%%%%%%%%%%%%%%%%%%%%%%%%%%%%%%%%%%%%%%%%%%%%%%%%

\begin{Claim}\label{claim:edgesum}\cite{CY}
If $v,w,x,y \in T^*$ are distinct vertices and $vw,xy\in E(H^*)$, then $\mu(vx)+\mu(vy)+\mu(wx)+\mu(wy)\ge s^*-2D+2$.
\end{Claim}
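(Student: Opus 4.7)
The plan is to reduce to an inclusion--exclusion argument on edge sets incident to the pair $\{v,w\}$ and the pair $\{x,y\}$, fed by the double-star bound of Remark~\ref{remark-basic} (equivalently Claim~\ref{lem:degreeinH*}(1)). Recall that for any edge $ab \in E(H^*)$, Remark~\ref{remark-basic} gives $|E(H^*)\setminus (E(a)\cup E(b))|\le D-2$, and hence $|E(a)\cup E(b)|\ge s^*-D+2$. Applying this separately to the edges $vw$ and $xy$ yields
\[
|E(v)\cup E(w)|\ge s^*-D+2 \qquad\text{and}\qquad |E(x)\cup E(y)|\ge s^*-D+2.
\]

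The next key observation is that the intersection $(E(v)\cup E(w))\cap (E(x)\cup E(y))$ is exactly the multiset of edges of $H^*$ having one endpoint in $\{v,w\}$ and the other in $\{x,y\}$. Since $v,w,x,y$ are distinct, any edge contributing to this intersection must have exactly one endpoint in each pair (no edge is incident to both a vertex of $\{v,w\}$ and a vertex of $\{x,y\}$ via coincident endpoints), so the intersection has size precisely $\mu(vx)+\mu(vy)+\mu(wx)+\mu(wy)$.

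Finally, since the union $E(v)\cup E(w)\cup E(x)\cup E(y)$ is contained in $E(H^*)$ and hence has size at most $s^*$, inclusion--exclusion gives
\[
\mu(vx)+\mu(vy)+\mu(wx)+\mu(wy)=|E(v)\cup E(w)|+|E(x)\cup E(y)|-|E(v)\cup E(w)\cup E(x)\cup E(y)|\ge 2(s^*-D+2)-s^*,
\]
which simplifies to $s^*-2D+4\ge s^*-2D+2$, yielding the claim. There is no real obstacle here: the only thing to be careful about is the bookkeeping of the intersection of incidence-sets as multisets in the multigraph, which works cleanly because $\{v,w\}$ and $\{x,y\}$ are disjoint. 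No further degree bounds or structural facts about $H_0^*$ are needed.
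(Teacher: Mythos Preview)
Your proof is correct and essentially identical to the paper's. The paper phrases it via complements---defining $E_1 = E(H^*)\setminus(E(v)\cup E(w))$ and $E_2 = E(H^*)\setminus(E(x)\cup E(y))$, bounding $|E_1|,|E_2|\le D-2$ by Remark~\ref{remark-basic}, and observing that $E_3:=E(H^*)\setminus(E_1\cup E_2)$ is exactly the set of edges with one endpoint in $\{v,w\}$ and the other in $\{x,y\}$---which by De~Morgan is the same computation as your inclusion--exclusion on $E(v)\cup E(w)$ and $E(x)\cup E(y)$.
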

\begin{proof}
Let $E_1:=\{e\in E(H^*): e\text{~does~not~share~any~endpoint~with}~vw\}$, $E_2:=\{e\in E(H^*): e\text{~does~not~share~any~endpoint~with}~xy\}$ and $E_3:=E(H^*)
\setminus(E_1\cup E_2)$. Note that for each edge $e\in E_3$, $e$ has one endpoint in $\{v,w\}$ and the other endpoint in $\{x,y\}$. Thus, $|E_3|=\mu(wx)+\mu(vx)+\mu(wy)+\mu(vy)$.
Since $v,w,x,y \in T^*$ are distinct vertices and $vw,xy\in E(H^*)$, by 
Remark~\ref{remark-basic}, we have that $|E_1|\le D-2$ and $|E_2|\le D-2$. Therefore, $|E_3|=|E(H^*)\setminus(E_1\cup E_2)|\ge |E(H^*)|-|E_1|-|E_2|\ge s^*-2(D-2)>s^*-2D+2$.
\end{proof}

\begin{remark} \label{remark-claim37} \rm
Note that if there are distinct vertices $v,w,x,y \in T^*$ with $vw,xy\in E(H^*)$, and $vx,vy,wx\notin E(H^*)$, then by Claim~\ref{claim:edgesum}, we have that $\mu(wy)\ge s^*-2D+2>\frac{5}{2}D-60-2D+2> \frac{D}{2}-60$. This helps us to show that the degree of $w$ in $H^*$ is large.
\end{remark}

\begin{Claim}\label{lem:H0trianglegfree}
$\bar{H^*_0}$ is triangle free.
\end{Claim}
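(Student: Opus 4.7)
The plan is to argue by contradiction. Suppose $\bar{H^*_0}$ contains a triangle; after relabelling we may take $\{v_1,v_2,v_3\}$ to be an independent set in $H^*_0$. Claim~\ref{lem:degreeV=6} says each of $v_1,v_2,v_3$ has degree at least $3$ in $H^*_0$, and since its only possible neighbors are the three vertices $v_4,v_5,v_6$, each of $v_1,v_2,v_3$ must be adjacent in $H^*_0$ to \emph{all} of $v_4,v_5,v_6$. Thus the nine bipartite edges $v_iv_j$ with $i\in\{1,2,3\}$ and $j\in\{4,5,6\}$ all belong to $E(H^*)$, and the only other possible edges of $H^*$ lie among $\{v_4,v_5,v_6\}$. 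Write $A=\sum_{i\le 3<j}\mu(v_iv_j)$ and $B=\mu(v_4v_5)+\mu(v_4v_6)+\mu(v_5v_6)$, so that $s^*=A+B$. The strategy is to derive incompatible lower and upper bounds on $B$.

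The upper bound is immediate from a degree-sum on the ``top'' side: every cross-edge contributes $1$ and every top-edge contributes $2$, so $d_{H^*}(v_4)+d_{H^*}(v_5)+d_{H^*}(v_6)=A+2B=s^*+B$. Since each term on the left is at most $D$ and $s^*>\frac{5}{2}D-60$ by Remark~\ref{sizeofS*}, we obtain $B\le 3D-s^*<\frac{D}{2}+60$. For the lower bound I apply Remark~\ref{remark-basic} to each of the nine cross-edges $v_iv_j$: the edges of $H^*$ sharing no endpoint with $v_iv_j$ are supported on $T^*\setminus\{v_i,v_j\}$, which contains the two remaining bottom vertices (not joined to each other, since $\{v_1,v_2,v_3\}$ is independent) and the two remaining top vertices, so their total multiplicity is at most $D-2$. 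When the nine resulting inequalities are summed, each cross-edge multiplicity $\mu(v_kv_l)$ with $k\le 3<l$ appears $4$ times (once for every $(i,j)$ with $i\ne k$ and $j\ne l$) and each top-edge multiplicity $\mu(v_jv_{j'})$ with $4\le j<j'\le 6$ appears $3$ times (once for every $i\in\{1,2,3\}$, with $j$ determined as the element of $\{4,5,6\}\setminus\{j,j'\}$). This yields $4A+3B\le 9(D-2)$, and substituting $A=s^*-B$ together with Remark~\ref{sizeofS*} gives $B\ge 4s^*-9D+18>D-222$.

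The two bounds combine to force $D-222<\frac{D}{2}+60$, i.e.\ $D<564$, contradicting $D\ge D_0$. The only nontrivial step is the multiplicity count when summing the nine applications of Remark~\ref{remark-basic}; once that is verified the contradiction is purely arithmetic. Notably the argument never uses information about which edges within $\{v_4,v_5,v_6\}$ are present in $H^*_0$ — they are absorbed into the single quantity $B$, so we need not case-split on the triangle structure of the top side.
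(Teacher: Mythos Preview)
Your argument is correct and considerably cleaner than the paper's. The paper proceeds by a three-way case split on what $\bar{H^*_0}$ looks like outside the triangle (two disjoint triangles; a triangle plus a $P_3$; a triangle plus at most one extra edge), handling the first two cases via an auxiliary weighted multigraph $H_1$ and the third by a longer chain of multiplicity estimates. You bypass all of this by observing that Claim~\ref{lem:degreeV=6} forces the full bipartite $K_{3,3}$ between $\{v_1,v_2,v_3\}$ and $\{v_4,v_5,v_6\}$, and then absorbing whatever happens among $\{v_4,v_5,v_6\}$ into the single aggregate $B$. The upper bound $B\le 3D-s^*$ from the top degree-sum and the lower bound $B\ge 4s^*-9D+18$ from summing the nine instances of Remark~\ref{remark-basic} are both sound; your multiplicity count ($4$ for each cross-edge, $3$ for each top-edge) is correct, though the sentence explaining the top-edge count reuses the symbol $j$ for two different indices and should be cleaned up. The resulting contradiction $D<564$ is well within $D\ge D_0$, so the proof closes with plenty of room. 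What you gain is a uniform argument with no case analysis; what the paper's approach offers is a template (the $H_1$ / $\eta,\gamma$ machinery) that it later reuses in Claim~\ref{lem:H0complete}.
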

\begin{proof}
Observe that $\bar{H^*_0}$ contains at most two triangles since each vertex of $H^*_0$ has degree at least three. Suppose that $\bar{H^*_0}$ contains a triangle. Then it suffices to consider the following three cases.

\medskip
{\bf Case 1} $\bar{H^*_0}$ consists of two disjoint triangles.

Let $H_1:=H^*_0$, as shown in Figure~\ref{H0trianglefree}(1).  By Claim~\ref{lem:degreeinH*}(1), for each edge $vw\in E(H_1)$, we have that $d_{H^*}(v)+d_{H^*}(w)-\mu(vw)\ge s^*-(D-2)$. Summing over all $9$ edges of $H_1$ gives
\begin{equation}\label{equa:degreesum}
\sum_{vw\in E(H_1)}(d_{H^*}(v)+d_{H^*}(w)-\mu(vw))\ge \sum_{vw\in E(H_1)}(s^*-(D-2)).
\tag{$\mathcal{A}$}
\end{equation}

It is straightforward to check that each edge of $H^*$ is counted exactly 5 times. Thus, the inequality can be rewritten as $$5s^*\ge 9(s^*-(D-2)),$$
which simplifies to $s^*\le \frac{9}{4}(D-2)<\frac{5}{2}D-60$, a contradiction.

\begin{figure}
\begin{center}
\begin{tikzpicture}[u/.style={fill=black,minimum size =4pt,ellipse,inner sep=1pt},node distance=1.5cm,scale=0.9]
\node[u] (v1) at (120:1.5){};
\node[u] (v2) at (60:1.5){};
\node[u] (v3) at (0:1.5){};
\node[u] (v4) at (300:1.5){};
\node[u] (v5) at (240:1.5){};
\node[u] (v6) at (180:1.5){};

  \draw (v1) -- (v2);
  \draw[dashed]  (v1) -- (v3);
  \draw (v1) -- (v4);
  \draw (v1) -- (v5);
  \draw (v2) -- (v3);
  \draw[dashed]  (v2) -- (v4);
  \draw[dashed]  (v2) -- (v5);
  \draw (v3) -- (v4);
  \draw (v3) -- (v5);
  \draw[dashed]  (v4) -- (v5);
  \draw[dashed]  (v1) -- (v6);
  \draw (v2) -- (v6);
  \draw[dashed]  (v3) -- (v6);
   \draw (v4) -- (v6);
  \draw (v5) -- (v6);

 \node[left=0.1cm,font=\small] at (v1) {\bf3};
    \node[above=0.01cm,font=\small] at (v1) {$v_1$};
  \node[right=0.1cm,font=\small,font=\bfseries\color{black}] at (v2) {\bf 3};
   \node[above=0.01cm,font=\small] at (v2) {$v_2$};
   \node[right=0.001,font=\small] at (v3) {$v_3$};
   \node[right=0.05cm,below=0.001cm,font=\small,font=\bfseries\color{black}] at (v3) {\bf 3};
  \node[below=0.01cm,font=\small] at (v4) {$v_4$};
  \node[right=0.01cm,font=\small,font=\bfseries\color{black}] at (v4) {\bf 3};
    \node[below=0.01cm,font=\small] at (v5) {$v_5$};
  \node[left=0.01cm,font=\small,font=\bfseries\color{black}] at (v5) {\bf 3};
  \node[left=0.01cm,font=\small] at (v6) {$v_6$};
  \node[left=0.05cm,below=0.0015cm,font=\small,font=\bfseries\color{black}] at (v6) {\bf 3};   \node[left=0.7cm,below=0.6cm] at (v4) {(1)};
 \end{tikzpicture}
 \begin{tikzpicture}[u/.style={fill=black,minimum size =4pt,ellipse,inner sep=1pt},node distance=1.5cm,scale=0.9]
\node[u] (v1) at (120:1.5){};
\node[u] (v2) at (60:1.5){};
\node[u] (v3) at (0:1.5){};
\node[u] (v4) at (300:1.5){};
\node[u] (v5) at (240:1.5){};
\node[u] (v6) at (180:1.5){};

  \draw (v1) -- (v2);
  \draw[bend left=20] (v1) to  (v2);
  \draw[dashed]  (v1) -- (v3);
  \draw (v1) -- (v4);
  \draw (v1) -- (v5);
  \draw (v2) -- (v3);
  \draw[bend left=20] (v2) to  (v3);
  \draw[dashed]  (v2) -- (v4);
  \draw[dashed]  (v2) -- (v5);
  \draw (v3) -- (v4);
  \draw (v3) -- (v5);
  \draw (v4) -- (v5);
  \draw[dashed]  (v1) -- (v6);
  \draw (v2) -- (v6);
  \draw[dashed]  (v3) -- (v6);
   \draw (v4) -- (v6);
  \draw (v5) -- (v6);

 \node[left=0.1cm,font=\small] at (v1) {\bf4};
    \node[above=0.01cm,font=\small] at (v1) {$v_1$};
  \node[right=0.1cm,font=\small,font=\bfseries\color{black}] at (v2) {\bf 5};
   \node[above=0.01cm,font=\small] at (v2) {$v_2$};
   \node[right=0.001,font=\small] at (v3) {$v_3$};
   \node[below=0.001cm,font=\small,font=\bfseries\color{black}] at (v3) {\bf 4};
  \node[below=0.01cm,font=\small] at (v4) {$v_4$};
  \node[right=0.01cm,font=\small,font=\bfseries\color{black}] at (v4) {\bf 4};
    \node[below=0.01cm,font=\small] at (v5) {$v_5$};
  \node[left=0.01cm,font=\small,font=\bfseries\color{black}] at (v5) {\bf 4};
  \node[left=0.01cm,font=\small] at (v6) {$v_6$};
  \node[left=0.05cm,below=0.0015cm,font=\small,font=\bfseries\color{black}] at (v6) {\bf 3};
   \node[left=0.7cm,below=0.6cm] at (v4) {(2)};
 \end{tikzpicture}
 \begin{tikzpicture}[u/.style={fill=black,minimum size =4pt,ellipse,inner sep=1pt},node distance=1.5cm,scale=0.2]
\node[u] (v1) at (120:6.5){};
\node[u] (v2) at (60:6.5){};
\node[u] (v3) at (0:6.5){};
\node[u] (v4) at (300:6.5){};
\node[u] (v5) at (240:6.5){};
\node[u] (v6) at (180:6.5){};

  \draw (v1) -- (v2);
  \draw[dashed]  (v1) -- (v3);
  \draw (v1) -- (v4);
  \draw (v1) -- (v5);
  \draw (v2) -- (v3);
  \draw (v2) -- (v4);
  \draw[dashed]  (v2) -- (v5);
  \draw (v3) -- (v4);
  \draw (v3) -- (v5);
  \draw (v4) -- (v5);
  \draw[dashed]  (v1) -- (v6);
  \draw (v2) -- (v6);
  \draw[dashed]  (v3) -- (v6);
   \draw (v4) -- (v6);
  \draw (v5) -- (v6);

\node[left=0.1cm,font=\small] at (v1) {\bf3};
    \node[above=0.01cm,font=\small] at (v1) {$v_1$};
  \node[right=0.1cm,font=\small,font=\bfseries\color{black}] at (v2) {\bf 4};
   \node[above=0.01cm,font=\small] at (v2) {$v_2$};
   \node[right=0.001,font=\small] at (v3) {$v_3$};
   \node[right=0.05cm,below=0.001cm,font=\small,font=\bfseries\color{black}] at (v3) {\bf 3};
  \node[below=0.01cm,font=\small] at (v4) {$v_4$};
  \node[right=0.01cm,font=\small,font=\bfseries\color{black}] at (v4) {\bf 5};
    \node[below=0.01cm,font=\small] at (v5) {$v_5$};
  \node[left=0.01cm,font=\small,font=\bfseries\color{black}] at (v5) {\bf 4};
  \node[left=0.01cm,font=\small] at (v6) {$v_6$};
  \node[left=0.05cm,below=0.0015cm,font=\small,font=\bfseries\color{black}] at (v6) {\bf 3};
  \node[left=0.7cm,below=0.6cm] at (v4) {(3)};
 \end{tikzpicture}
 \begin{tikzpicture}[u/.style={fill=black,minimum size =4pt,ellipse,inner sep=1pt},node distance=1.5cm,scale=0.2]
\node[u] (v1) at (120:6.5){};
\node[u] (v2) at (60:6.5){};
\node[u] (v3) at (0:6.5){};
\node[u] (v4) at (300:6.5){};
\node[u] (v5) at (240:6.5){};
\node[u] (v6) at (180:6.5){};

  \draw (v1) -- (v2);
  \draw[dashed]  (v1) -- (v3);
  \draw (v1) -- (v4);
  \draw (v1) -- (v5);
  \draw (v2) -- (v3);
  \draw (v2) -- (v4);
  \draw (v2) -- (v5);
  \draw (v3) -- (v4);
  \draw (v3) -- (v5);
  \draw (v4) -- (v5);
  \draw[dashed]  (v1) -- (v6);
  \draw (v2) -- (v6);
  \draw[dashed]  (v3) -- (v6);
   \draw (v4) -- (v6);
  \draw (v5) -- (v6);

 \node[left=0.1cm,font=\small] at (v1) {\bf3};
    \node[above=0.01cm,font=\small] at (v1) {$v_1$};
  \node[right=0.1cm,font=\small,font=\bfseries\color{black}] at (v2) {\bf 5};
   \node[above=0.01cm,font=\small] at (v2) {$v_2$};
   \node[right=0.001,font=\small] at (v3) {$v_3$};
   \node[right=0.05cm,below=0.001cm,font=\small,font=\bfseries\color{black}] at (v3) {\bf 3};
  \node[below=0.01cm,font=\small] at (v4) {$v_4$};
  \node[right=0.01cm,font=\small,font=\bfseries\color{black}] at (v4) {\bf 5};
    \node[below=0.01cm,font=\small] at (v5) {$v_5$};
  \node[left=0.01cm,font=\small,font=\bfseries\color{black}] at (v5) {\bf 5};
  \node[left=0.01cm,font=\small] at (v6) {$v_6$};
  \node[left=0.05cm,below=0.0015cm,font=\small,font=\bfseries\color{black}] at (v6) {\bf 3};
  \node[left=0.7cm,below=0.6cm] at (v4) {(4)};
 \end{tikzpicture}
 \end{center}
 \caption{The graphs $H^*_0$ and $H_1$ in the proof of Claim~\ref{lem:H0trianglegfree}}\label{H0trianglefree}
 \end{figure}
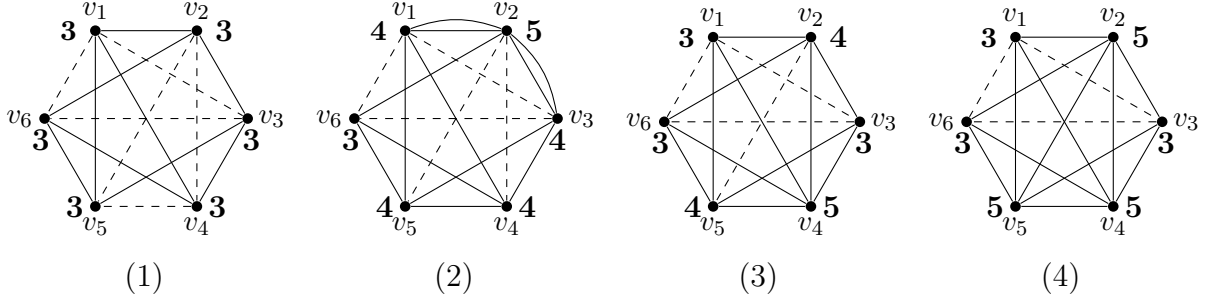
\medskip
{\bf Case 2} $\bar{H^*_0}$ consists of a disjoint union of a triangle and a path of length 2.

Let $H_1$ be the multigraph obtained from $H^*_0$ by adding edges, as shown in Figure~\ref{H0trianglefree}(2).  The inequality~\eqref{equa:degreesum}, now, can be rewritten as $$7s^*\ge 12(s^*-(D-2)),$$  since $H_1$ has 12 edges and each edge of $H^*$ is counted at most 7 times.  Therefore, $s^*\le \frac{12}{5}(D-2)<\frac{5}{2}D-60$, a contradiction.

\medskip
{\bf Case 3} $\bar{H^*_0}$ consists of a disjoint union of a triangle and a path of length 1  or $\bar{H^*_0}$ consists of a triangle.

It suffices to consider that $H^*_0$ is as shown in Figure~\ref{H0trianglefree}(3) or \ref{H0trianglefree}(4). Note that $d_{H^*_0}(v_1)=d_{H^*_0}(v_3)=d_{H^*_0}(v_6)=3$. Then, by Claim~\ref{lem:degreeinH*}(4),  $d_{H^*}(v_i)\ge \frac{3}{2}(s^*-2D+2)$ for each $i\in \{1,3,6\}$.

Since for $\{i,j,k\}=\{1,3,6\}$ each edge in $\{v_iv_2,v_iv_4,v_jv_2,v_jv_4,v_2v_4\}$ is disjoint from $v_5v_k$, by Remark~\ref{remark-basic}, we have that

\[
\begin{aligned} \label{D-minus-two}
\mu(v_iv_2)+\mu(v_iv_4)+\mu(v_jv_2)+\mu(v_jv_4)+\mu(v_2v_4)\le D-2.
\end{aligned}
\tag{$\mathcal{B}$}
\]

Observe that, by Claim~\ref{lem:degreeinH*}(3), $\mu(v_iv_2)+\mu(v_iv_4)\ge s^*-2D+2$ and $\mu(v_jv_2)+\mu(v_jv_4)\ge s^*-2D+2$.
Thus,  by inequality (\ref{D-minus-two}) above, we have that

\[
\begin{aligned}\label{upper-two-mu}
\mu(v_iv_2)+\mu(v_iv_4)&\le D-2-(\mu(v_jv_2)+\mu(v_jv_4))\\
& \le D-2-(s^*-2D+2)=3D-s^*-4, \mbox{ and }\\
\mu(v_2v_4)&\le D-2-(\mu(v_iv_2)+\mu(v_iv_4)+\mu(v_jv_2)+\mu(v_jv_4))\\& \le D-2-2(s^*-2D+2)=5D-2s^*-6.
\end{aligned}
\tag{$\mathcal{C}$}
\]

By the same argument, for $\{i,j,k\}=\{1,3,6\}$, since each edge in $\{v_iv_4,v_iv_5,v_kv_4,v_kv_5,v_4v_5\}$ is disjoint from $v_2v_j$ and each edge in $\{v_iv_2,v_iv_5,v_jv_2,v_jv_5,v_2v_5\}$ is disjoint from $v_4v_k$, we have that
\begin{eqnarray*}
\mu(v_iv_4)+\mu(v_iv_5)+\mu(v_kv_4)+\mu(v_kv_5)+\mu(v_4v_5) & \le &  D-2, \mbox{ and } \\
\mu(v_iv_2)+\mu(v_iv_5)+\mu(v_jv_2)+\mu(v_jv_5)+\mu(v_2v_5) & \le &  D-2.
\end{eqnarray*}

Again, from Claim~\ref{lem:degreeinH*}(3),  we have the following inequalities which are the same type as (\ref{upper-two-mu}).
\begin{eqnarray*}
s^*-2D+2   \le \mu(v_iv_4)+\mu(v_iv_5)  & \le &  D-2-(s^*-2D+2)=3D-s^*-4, \\
 \mu(v_4v_5) & \le & D-2-2(s^*-2D+2)=5D-2s^*-6, \\
s^*-2D+2   \le \mu(v_iv_2)+\mu(v_iv_5)  & \le &  D-2-(s^*-2D+2)=3D-s^*-4, \\
\mu(v_2v_5) & \le & D-2-2(s^*-2D+2)=5D-2s^*-6.
\end{eqnarray*}

Next we claim that $\mu(v_iv_p)\ge \frac{3}{2}s^*-\frac{7}{2}D+3$ for each $p\in \{2,4,5\}$. Suppose not; then for each $q\in \{2,4,5\}\setminus\{p\}$, by claim~\ref{lem:degreeinH*}(3), we have that 
$$\mu(v_iv_q)\ge s^*-2D+2-\mu(v_iv_p)\ge s^*-2D+2-\left(\frac{3}{2}s^*-\frac{7}{2}D+3\right)=\frac{3}{2}D-\frac{s^*}{2}-1$$
which implies that for $\{q,\ell\}=\{2,4,5\}\setminus\{p\}$, we have hat
\[
\mu(v_iv_q)+\mu(v_iv_\ell)\ge 2\left(\frac{3}{2}D-\frac{s^*}{2}-1\right)=3D-s^*-2>3D-s^*-4.
\]
This is a contradiction to inequality (\ref{upper-two-mu}).  Hence, $\mu(v_iv_p)\ge \frac{3}{2}s^*-\frac{7}{2}D+3$ for each $p\in \{2,4,5\}$.

\medskip
Therefore, by the symmetry of $i,j,k$,  for each $s\in \{1,3,6\}$ and $t\in \{2,4,5\}$, we have that \[\mu(v_sv_t)\ge \frac{3}{2}s^*-\frac{7}{2}D+3.\]

From the upper bounds on $\mu(v_2v_5), \ \mu(v_4v_5),$ and  $ \mu(v_2v_4)$  in inequality (\ref{upper-two-mu}),
we have that
\[\mu(v_2v_5)+\mu(v_4v_5)+\mu(v_2v_4)\le 3 \left(D-2-2(s^*-2D+2)\right)=3(5D-2s^*-6).\]
So, $$d_{H^*}(v_1)+d_{H^*}(v_3)+d_{H^*}(v_6)\ge s^*-3(5D-2s^*-6)=7s^*-15D+18.$$
Then there are integers
$s\in \{1,3,6\}$ and $t\in \{2,4,5\}$ such that
\[ d_{H^*}(v_s)\ge \frac{1}{3}(7s^*-15D+18) \mbox{ and } \mu(v_sv_t)\ge \frac{1}{9}(7s^*-15D+18).
\]
Then, for each $r\in \{2,4,5\}\setminus\{t\}$, we have that 
\[\mu(v_sv_t)+\mu(v_sv_r)\ge  \frac{1}{9}(7s^*-15D+18)+\frac{3}{2}s^*-\frac{7}{2}D+3 >3D-s^*-4.
\]
This is a contradiction to inequality (\ref{upper-two-mu}).

Therefore, $\bar{H^*_0}$ is triangle free.
\end{proof}

%%%%%%%%%%%%%%%%%%%%%%%%%%%%%%%%%%%%%%

 Now, we prove that $H^*_0$ is actually a complete graph.
\begin{Claim}\label{lem:H0complete}
$H^*_0$ is a complete graph.
\end{Claim}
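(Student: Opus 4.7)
The plan is to assume toward a contradiction that $\bar{H^*_0}$ contains at least one edge. Claim~\ref{lem:degreeV=6} gives $\delta(H^*_0)\ge 3$, and Claim~\ref{lem:H0trianglegfree} says $\bar{H^*_0}$ is triangle-free, so $\bar{H^*_0}$ is a triangle-free graph on six vertices of maximum degree at most $2$; equivalently, $\bar{H^*_0}$ is a disjoint union of paths and cycles of length at least $4$. Up to isomorphism this leaves a short finite list of configurations, and I intend to derive a contradiction in each one.

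The backbone of the case analysis will be the two tools already used in the proof of Claim~\ref{lem:H0trianglegfree}: first, the averaged form of Claim~\ref{lem:degreeinH*}(1),
\[
\sum_{vw\in E(H_1)}\bigl(d_{H^*}(v)+d_{H^*}(w)-\mu(vw)\bigr)\;\ge\;|E(H_1)|\bigl(s^*-D+2\bigr),
\]
applied to a well-chosen auxiliary multigraph $H_1$ on $V(H^*)$ with $E(H_1)\subseteq E(H^*_0)$; and second, Remark~\ref{remark-basic}, which bounds by $D-2$ the total multiplicity of $H^*$-edges disjoint from any fixed edge, paired with Claim~\ref{claim:edgesum} on cross-multiplicities of two disjoint edges.

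For configurations with many non-edges of $H^*_0$ (for instance $\bar{H^*_0}\cong C_6$, or a $4$-cycle plus an extra edge, or a path of length five, or two disjoint paths of length two), taking $H_1=H^*_0$, possibly after doubling at most two strategically chosen edges whose endpoints avoid the high-degree vertices of $H^*_0$, already pushes the max-count $c:=\max_{xy\in E(H^*_0)}\bigl(d_{H_1}(x)+d_{H_1}(y)-\mu_{H_1}(xy)\bigr)$ strictly below $\tfrac{3}{5}|E(H_1)|$; the key inequality then gives $s^*\le |E(H_1)|(D-2)/(|E(H_1)|-c)<\tfrac{5}{2}D-60$ for $D\ge D_0$, contradicting Remark~\ref{sizeofS*}. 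For configurations with only a few non-edges the pure averaging is too coarse, and I would instead partition $V(H^*)$ into blocks determined by the non-edge pattern and combine Remark~\ref{remark-basic} summed over cross-edges with Claim~\ref{lem:degreeinH*}(1) summed over internal edges, obtaining two competing linear inequalities in the block multiplicity sums that, together with $\sum_e\mu(e)=s^*$, bound $s^*$ from above by a quantity less than $\tfrac{5}{2}D-60$. For example, when $\bar{H^*_0}$ is the single non-edge $v_5v_6$, summing Remark~\ref{remark-basic} over the eight edges between $\{v_1,\dots,v_4\}$ and $\{v_5,v_6\}$ gives $4A+3B\le 8(D-2)$, summing Claim~\ref{lem:degreeinH*}(1) over the six edges inside $\{v_1,\dots,v_4\}$ gives $5A+3B\ge 6(s^*-D+2)$, and together with $A+B=s^*$ (where $A$ is the total multiplicity inside $\{v_1,\dots,v_4\}$ and $B=d_{H^*}(v_5)+d_{H^*}(v_6)$) one obtains $s^*\le 22(D-2)/9$, already strictly below $\tfrac{5}{2}D-60$ once $D>992$.

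The main obstacle is the combinatorial bookkeeping needed to cover every shape of $\bar{H^*_0}$ with a uniformly effective choice of $H_1$ or block decomposition; the delicate sub-cases are the two- and three-edge non-edge patterns, where the resulting linear inequality has a narrow margin and one must invoke Claim~\ref{claim:degvinT}, Claim~\ref{lem:degreeinH*}, Remark~\ref{remark-basic}, and Claim~\ref{claim:edgesum} simultaneously, calibrating the weights in the linear combination of multiplicity inequalities to the specific block structure induced by $\bar{H^*_0}$. Once such a tailored combination is verified in each of the finitely many shapes, the assumption that $\bar{H^*_0}$ has an edge is refuted and $H^*_0$ must be complete.
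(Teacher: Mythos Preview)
Your overall strategy matches the paper's: reduce to a finite list of triangle-free $\bar{H^*_0}$ with $\Delta\le 2$ and dispatch each with some weighted combination of Claim~\ref{lem:degreeinH*}(1), Remark~\ref{remark-basic}, and Claim~\ref{claim:edgesum}. Your worked block-decomposition for the single non-edge is correct and is a legitimate alternative to the paper's $H_1$-multigraph construction for that case; indeed $4A+3B\le 8(D-2)$, $5A+3B\ge 6(s^*-D+2)$, $A+B=s^*$ do yield $s^*\le \tfrac{22}{9}(D-2)$.

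There is, however, a real gap in your ``many non-edges'' branch. You assert that for configurations such as $\bar{H^*_0}\cong C_4$, $C_5$, or $P_4$, taking $H_1=H^*_0$ with at most two doubled edges avoiding high-degree vertices forces $\eta<\tfrac{3}{5}\gamma$. This fails. For $\bar{H^*_0}=C_4$ (with $v_5,v_6$ isolated), the edge $v_5v_6\in E(H^*_0)$ has both endpoints of degree $5$, so $c(v_5v_6)\ge 9$ regardless of which low-degree edges you double; no choice of $H_1$ with only two doublings brings $\eta/\gamma$ below $3/5$. Similarly, for $\bar{H^*_0}=C_5$ or $P_4$, the surviving degree-$5$ vertex forces $\eta\ge 8$ while $\gamma\le 13$, giving only $s^*\le 3(D-2)$. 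The paper avoids this by \emph{first} eliminating $C_4$ and any path of length $\ge 4$ in $\bar{H^*_0}$ via a direct application of Claim~\ref{claim:edgesum} (for $C_4$: the four cross-multiplicities are all zero, contradicting $s^*-2D+2>0$; for $P_4$: Claim~\ref{claim:edgesum} forces four vertices to have $d_{H^*}\ge 2(s^*-2D+2)$, collapsing the remaining degree sum below $s^*-D+2$). Only after this reduction does the $H_1$-multigraph method handle the eight residual shapes. Your dichotomy ``many vs.\ few non-edges'' is thus the wrong cut; the working dichotomy is ``$\bar{H^*_0}$ contains $C_4$ or $P_4$'' (dispatched by Claim~\ref{claim:edgesum}) versus ``$\bar{H^*_0}$ is a union of short paths'' (dispatched by carefully weighted $H_1$). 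Reorganize accordingly.
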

\begin{proof}
Note that it is impossible for $\bar{H^*_0}$ to contain a star $K_{1,t}$ where $t\ge 3$ since each vertex of $H^*_0$ has degree at least three. We start the proof by asserting that there is no paths and cycles of length at least 4 in $\bar{H^*_0}$.
Suppose, to the contrary, that there is a path of length 4 in  $\bar{H^*_0}$.
Without loss of generality, let $v_1v_6v_5v_4v_3$ be a path of length 4 contained in $\bar{H^*_0}$. Then $v_2v_5\in E(H^*_0)$  by Claim~\ref{lem:degreeV=6}. Consider the edges $v_1v_5, v_6v_4$ and $v_3v_5,v_6v_4$. Then
by Claim~\ref{claim:edgesum}, we have that
\[\mu(v_1v_4)\ge s^*-2D+2 \mbox{ and } \mu(v_3v_6)\ge s^*-2D+2.
\]
So, by Claim~\ref{lem:degreeinH*}(3), we have that $d_{H^*}(v_1),$ $d_{H^*}(v_4),d_{H^*}(v_3),$ $d_{H^*}(v_6)\ge2(s^*-2D+2)=2s^*-4D+4$.
Then 
\[
d_{H^*}(v_2)+d_{H^*}(v_5)\le 2s^*-4(2s^*-4D+4)=16D-6s^*-16<s^*-D+2,
\] 
contradicting to Claim~\ref{lem:degreeinH*}(1) since $v_2v_5\in E(H^*_0)$.
It also implies that there is no paths and cycles of length at least 5. Thus, to prove the assertion, it is sufficient to show that $\bar{H}^*_0$ does not  contain $C_4$ as a subgraph. Suppose, to the contrary, that $v_3v_4v_5v_6v_3$ is a cycle of length 4 contained in $\bar{H}^*_0$. Then by
Claim~\ref{claim:edgesum} with edges $v_3v_5$ and $v_4v_6$ contained in  $H^*$, we have the following contradiction.
\[
0 = \mu(v_3 v_4) + \mu(v_4 v_5) + \mu(v_5 v_6)+ \mu(v_3 v_6) \geq s^* - 2D + 2 > 0.
\]
Thus, the assertion holds. Therefore, by Claim~\ref{lem:H0trianglegfree} and  what we have proved above, there is no cycles contained in $\bar{H}^*_0$ and the paths contained in $\bar{H}^*_0$ is of length at most 3.

Now, we will show that there is no path of length at least 1 in $\bar{H}^*_0$. Suppose, to the contrary,  that there is a path of length at least 1 in $\bar{H}^*_0$. Then, according to the length of paths and the number of vertex disjoint paths contained in $\bar{H}^*_0$,  all the possible cases needed to consider are that $\bar{H}^*_0$ consists of
\begin{enumerate}
\item[(1)] a disjoint union of a path of length 3 and a path of length 1,
\item[(2)] a path of length 3,
\item[(3)] 2 vertex disjoint paths of length 2,

\item[(4)] a disjoint union of a path of length 2 and a path of length 1,
\item[(5)] a path of length 2,
\item[(6)] 3 vertex disjoint paths of length 1,
\item[(7)] 2 vertex disjoint paths of length 1, or
\item[(8)] a path of length 1.
 \end{enumerate}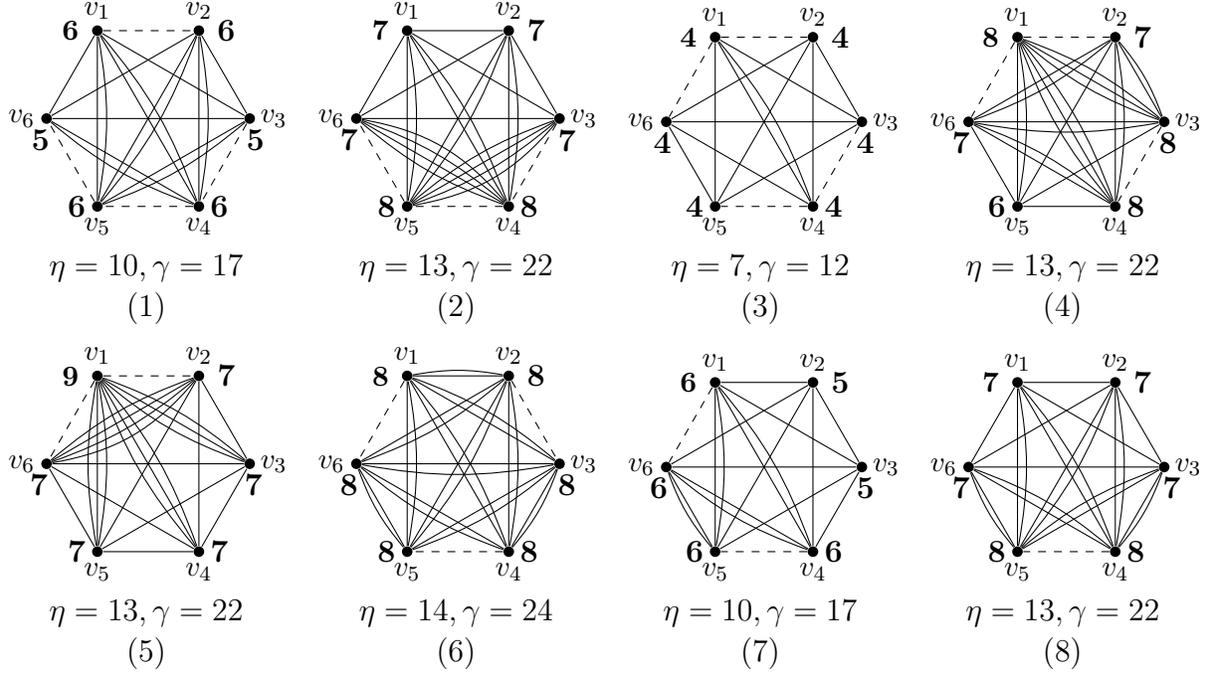
\begin{figure}[htbp]
\begin{center}
\begin{tikzpicture}[u/.style={fill=black,minimum size =4pt,ellipse,inner sep=1pt},node distance=1.5cm,scale=0.9]
\node[u] (v1) at (120:1.5){};
\node[u] (v2) at (60:1.5){};
\node[u] (v3) at (0:1.5){};
\node[u] (v4) at (300:1.5){};
\node[u] (v5) at (240:1.5){};
\node[u] (v6) at (180:1.5){};

  \draw[dashed] (v1) -- (v2);
  \draw (v1) -- (v3);
  \draw (v1) -- (v4);
  \draw[bend left=10] (v1) to  (v4);
  \draw (v1) -- (v5);
    \draw[bend left=10] (v1) to  (v5);
  \draw (v2) -- (v3);
  \draw (v2) -- (v4);
    \draw[bend left=10] (v2) to  (v4);
      \draw[bend left=10] (v2) to  (v5);
  \draw (v2) -- (v5);
  \draw[dashed] (v3) -- (v4);
  \draw (v3) -- (v5);
    \draw[bend left=10] (v3) to  (v5);
  \draw[dashed] (v4) -- (v5);
  \draw (v1) -- (v6);
  \draw (v2) -- (v6);
  \draw  (v3) -- (v6);
   \draw (v4) -- (v6);
    \draw[bend left=10] (v4) to  (v6);
  \draw[dashed] (v5) -- (v6);

 \node[left=0.1cm,font=\small] at (v1) {\bf6};
    \node[above=0.01cm,font=\small] at (v1) {$v_1$};
  \node[right=0.1cm,font=\small,font=\bfseries\color{black}] at (v2) {\bf 6};
   \node[above=0.01cm,font=\small] at (v2) {$v_2$};
   \node[right=0.001,font=\small] at (v3) {$v_3$};
   \node[right=0.08cm,below=0.001cm,font=\small,font=\bfseries\color{black}] at (v3) {\bf 5};
  \node[below=0.01cm,font=\small] at (v4) {$v_4$};
  \node[right=0.01cm,font=\small,font=\bfseries\color{black}] at (v4) {\bf 6};
    \node[below=0.01cm,font=\small] at (v5) {$v_5$};
  \node[left=0.01cm,font=\small,font=\bfseries\color{black}] at (v5) {\bf 6};
  \node[left=0.01cm,font=\small] at (v6) {$v_6$};
  \node[left=0.08cm,below=0.0015cm,font=\small,font=\bfseries\color{black}] at (v6) {\bf 5};
  \node[left=0.7cm,below=0.5cm] at (v4) {$\eta=10,\gamma=17$};

  \node[left=0.7cm,below=1cm] at (v4) {(1)};
 \end{tikzpicture}
 \begin{tikzpicture}[u/.style={fill=black,minimum size =4pt,ellipse,inner sep=1pt},node distance=1.5cm,scale=0.9]
\node[u] (v1) at (120:1.5){};
\node[u] (v2) at (60:1.5){};
\node[u] (v3) at (0:1.5){};
\node[u] (v4) at (300:1.5){};
\node[u] (v5) at (240:1.5){};
\node[u] (v6) at (180:1.5){};

  \draw (v1) -- (v2);
  \draw (v1) -- (v3);
  \draw (v1) -- (v4);
   \draw[bend left=10] (v1) to  (v4);
  \draw (v1) -- (v5);
   \draw[bend left=10] (v1) to  (v5);
  \draw (v2) -- (v3);
  \draw (v2) -- (v4);
   \draw[bend left=10] (v2) to  (v4);
  \draw  (v2) -- (v5);
   \draw[bend left=10] (v2) to  (v5);
  \draw[dashed] (v3) -- (v4);
  \draw (v3) -- (v5);
   \draw[bend left=10] (v3) to  (v5);
    \draw[bend left=20] (v3) to  (v5);
     \draw[bend right=10] (v3) to  (v5);
  \draw[dashed] (v4) -- (v5);
  \draw (v1) -- (v6);
  \draw (v2) -- (v6);
  \draw (v3) -- (v6);
   \draw (v4) -- (v6);
    \draw[bend left=10] (v4) to  (v6);
     \draw[bend right=10] (v4) to  (v6);
      \draw[bend right=20] (v4) to  (v6);
  \draw[dashed] (v5) -- (v6);

 \node[left=0.1cm,font=\small] at (v1) {\bf7};
    \node[above=0.01cm,font=\small] at (v1) {$v_1$};
  \node[right=0.1cm,font=\small,font=\bfseries\color{black}] at (v2) {\bf 7};
   \node[above=0.01cm,font=\small] at (v2) {$v_2$};
   \node[right=0.01,font=\small] at (v3) {$v_3$};
   \node[right=0.1cm,below=0.01cm,font=\small,font=\bfseries\color{black}] at (v3) {\bf 7};
  \node[below=0.01cm,font=\small] at (v4) {$v_4$};
  \node[right=0.01cm,font=\small,font=\bfseries\color{black}] at (v4) {\bf 8};
    \node[below=0.01cm,font=\small] at (v5) {$v_5$};
  \node[left=0.01cm,font=\small,font=\bfseries\color{black}] at (v5) {\bf 8};
  \node[left=0.01cm,font=\small] at (v6) {$v_6$};
  \node[left=0.1cm,below=0.0015cm,font=\small,font=\bfseries\color{black}] at (v6) {\bf 7};
  \node[left=0.7cm,below=0.5cm] at (v4) {$\eta=13,\gamma=22$};
   \node[left=0.7cm,below=1cm] at (v4) {(2)};
 \end{tikzpicture}
 \begin{tikzpicture}[u/.style={fill=black,minimum size =4pt,ellipse,inner sep=1pt},node distance=1.5cm,scale=0.2]
\node[u] (v1) at (120:6.5){};
\node[u] (v2) at (60:6.5){};
\node[u] (v3) at (0:6.5){};
\node[u] (v4) at (300:6.5){};
\node[u] (v5) at (240:6.5){};
\node[u] (v6) at (180:6.5){};

  \draw[dashed] (v1) -- (v2);
  \draw  (v1) -- (v3);
  \draw (v1) -- (v4);
   \draw[bend left=10] (v1) to  (v4);
  \draw (v1) -- (v5);
  \draw (v2) -- (v3);
  \draw (v2) -- (v4);
  \draw (v2) -- (v5);
  \draw[dashed] (v3) -- (v4);
  \draw (v3) -- (v5);
  \draw[dashed] (v4) -- (v5);
  \draw[dashed] (v1) -- (v6);
  \draw (v2) -- (v6);
  \draw (v3) -- (v6);
   \draw (v4) -- (v6);
  \draw  (v5) -- (v6);

\node[left=0.1cm,font=\small] at (v1) {\bf4};
    \node[above=0.01cm,font=\small] at (v1) {$v_1$};
  \node[right=0.1cm,font=\small,font=\bfseries\color{black}] at (v2) {\bf 4};
   \node[above=0.01cm,font=\small] at (v2) {$v_2$};
   \node[right=0.001,font=\small] at (v3) {$v_3$};
   \node[right=0.05cm,below=0.001cm,font=\small,font=\bfseries\color{black}] at (v3) {\bf 4};
  \node[below=0.01cm,font=\small] at (v4) {$v_4$};
  \node[right=0.01cm,font=\small,font=\bfseries\color{black}] at (v4) {\bf 4};
    \node[below=0.01cm,font=\small] at (v5) {$v_5$};
  \node[left=0.01cm,font=\small,font=\bfseries\color{black}] at (v5) {\bf 4};
  \node[left=0.01cm,font=\small] at (v6) {$v_6$};
  \node[left=0.05cm,below=0.0015cm,font=\small,font=\bfseries\color{black}] at (v6) {\bf 4};
  \node[left=0.7cm,below=0.5cm] at (v4) {$\eta=7,\gamma=12$};
  \node[left=0.7cm,below=1cm] at (v4) {(3)};
 \end{tikzpicture}
 \begin{tikzpicture}[u/.style={fill=black,minimum size =4pt,ellipse,inner sep=1pt},node distance=1.5cm,scale=0.2]
\node[u] (v1) at (120:6.5){};
\node[u] (v2) at (60:6.5){};
\node[u] (v3) at (0:6.5){};
\node[u] (v4) at (300:6.5){};
\node[u] (v5) at (240:6.5){};
\node[u] (v6) at (180:6.5){};

  \draw[dashed]  (v1) -- (v2);
  \draw (v1) -- (v3);
  \draw (v1) -- (v4);
  \draw (v1) -- (v5);
  \draw (v2) -- (v3);
  \draw (v2) -- (v4);
  \draw (v2) -- (v5);
  \draw[dashed]  (v3) -- (v4);
  \draw (v3) -- (v5);
  \draw (v4) -- (v5);
  \draw[dashed]  (v1) -- (v6);
  \draw (v2) -- (v6);
  \draw (v3) -- (v6);
   \draw (v4) -- (v6);
  \draw (v5) -- (v6);
  \draw[bend left=10] (v1) to  (v3);
  \draw[bend right=10] (v1) to  (v3);
  \draw[bend left=10] (v1) to  (v4);
  \draw[bend right=10] (v1) to  (v4);
  \draw[bend left=10] (v1) to  (v5);
  \draw[bend left=10] (v2) to  (v3);
    \draw[bend left=10] (v2) to  (v4);
      \draw[bend left=10] (v2) to  (v6);
  \draw[bend left=10] (v3) to  (v6);
  \draw[bend right=10] (v4) to  (v6);

 \node[left=0.1cm,font=\small] at (v1) {\bf8};
    \node[above=0.01cm,font=\small] at (v1) {$v_1$};
  \node[right=0.1cm,font=\small,font=\bfseries\color{black}] at (v2) {\bf 7};
   \node[above=0.01cm,font=\small] at (v2) {$v_2$};
   \node[right=0.001,font=\small] at (v3) {$v_3$};
   \node[right=0.05cm,below=0.001cm,font=\small,font=\bfseries\color{black}] at (v3) {\bf 8};
  \node[below=0.01cm,font=\small] at (v4) {$v_4$};
  \node[right=0.01cm,font=\small,font=\bfseries\color{black}] at (v4) {\bf 8};
    \node[below=0.01cm,font=\small] at (v5) {$v_5$};
  \node[left=0.01cm,font=\small,font=\bfseries\color{black}] at (v5) {\bf 6};
  \node[left=0.01cm,font=\small] at (v6) {$v_6$};
  \node[left=0.1cm,below=0.0015cm,font=\small,font=\bfseries\color{black}] at (v6) {\bf 7};
  \node[left=0.7cm,below=0.5cm] at (v4) {$\eta=13,\gamma=22$};
  \node[left=0.7cm,below=1cm] at (v4) {(4)};
 \end{tikzpicture}

 \begin{tikzpicture}[u/.style={fill=black,minimum size =4pt,ellipse,inner sep=1pt},node distance=1.5cm,scale=0.9]
\node[u] (v1) at (120:1.5){};
\node[u] (v2) at (60:1.5){};
\node[u] (v3) at (0:1.5){};
\node[u] (v4) at (300:1.5){};
\node[u] (v5) at (240:1.5){};
\node[u] (v6) at (180:1.5){};

  \draw[dashed] (v1) -- (v2);
  \draw (v1) -- (v3);
  \draw[bend left=10] (v1) to  (v3);
  \draw[bend right=10] (v1) to  (v3);
  \draw (v1) -- (v4);
  \draw[bend left=10] (v1) to  (v4);
  \draw[bend right=10] (v1) to  (v4);
  \draw (v1) -- (v5);
    \draw[bend left=10] (v1) to  (v5);
    \draw[bend right=10] (v1) to  (v5);
  \draw (v2) -- (v3);
  \draw (v2) -- (v4);
  \draw (v2) -- (v5);
  \draw (v3) -- (v4);
  \draw (v3) -- (v5);
  \draw (v4) -- (v5);
  \draw[dashed] (v1) -- (v6);
  \draw (v2) -- (v6);
  \draw[bend left=10] (v2) to  (v6);
  \draw[bend left=20] (v2) to  (v6);
  \draw[bend right=10] (v2) to  (v6);
  \draw  (v3) -- (v6);
   \draw (v4) -- (v6);
  \draw (v5) -- (v6);

 \node[left=0.1cm,font=\small] at (v1) {\bf9};
    \node[above=0.01cm,font=\small] at (v1) {$v_1$};
  \node[right=0.1cm,font=\small,font=\bfseries\color{black}] at (v2) {\bf 7};
   \node[above=0.01cm,font=\small] at (v2) {$v_2$};
   \node[right=0.001,font=\small] at (v3) {$v_3$};
   \node[right=0.05cm,below=0.001cm,font=\small,font=\bfseries\color{black}] at (v3) {\bf 7};
  \node[below=0.01cm,font=\small] at (v4) {$v_4$};
  \node[right=0.01cm,font=\small,font=\bfseries\color{black}] at (v4) {\bf 7};
    \node[below=0.01cm,font=\small] at (v5) {$v_5$};
  \node[left=0.01cm,font=\small,font=\bfseries\color{black}] at (v5) {\bf 7};
  \node[left=0.01cm,font=\small] at (v6) {$v_6$};
  \node[left=0.1cm,below=0.0015cm,font=\small,font=\bfseries\color{black}] at (v6) {\bf 7};
  \node[left=0.7cm,below=0.5cm] at (v4) {$\eta=13,\gamma=22$};
  \node[left=0.7cm,below=1cm] at (v4) {(5)};
 \end{tikzpicture}
\begin{tikzpicture}[u/.style={fill=black,minimum size =4pt,ellipse,inner sep=1pt},node distance=1.5cm,scale=0.9]
\node[u] (v1) at (120:1.5){};
\node[u] (v2) at (60:1.5){};
\node[u] (v3) at (0:1.5){};
\node[u] (v4) at (300:1.5){};
\node[u] (v5) at (240:1.5){};
\node[u] (v6) at (180:1.5){};

  \draw (v1) -- (v2);
   \draw[bend left=10] (v1) to  (v2);
  \draw (v1) -- (v3);
   \draw[bend left=10] (v1) to  (v3);
  \draw (v1) -- (v4);
   \draw[bend left=10] (v1) to  (v4);
  \draw (v1) -- (v5);
   \draw[bend left=10] (v1) to  (v5);
  \draw[dashed] (v2) -- (v3);
  \draw (v2) -- (v4);
   \draw[bend left=10] (v2) to  (v4);
  \draw  (v2) -- (v5);
   \draw[bend left=10] (v2) to  (v5);
  \draw (v3) -- (v4);
   \draw[bend left=10] (v3) to  (v4);
  \draw (v3) -- (v5);
   \draw[bend left=10] (v3) to  (v5);
  \draw[dashed] (v4) -- (v5);
  \draw[dashed] (v1) -- (v6);
  \draw (v2) -- (v6);
   \draw[bend left=10] (v2) to  (v6);
  \draw (v3) -- (v6);
   \draw[bend left=10] (v3) to  (v6);
   \draw (v4) -- (v6);
    \draw[bend left=10] (v4) to  (v6);
    \draw (v5) -- (v6);
    \draw[bend left=10] (v5) to  (v6);

 \node[left=0.1cm,font=\small] at (v1) {\bf8};
    \node[above=0.01cm,font=\small] at (v1) {$v_1$};
  \node[right=0.1cm,font=\small,font=\bfseries\color{black}] at (v2) {\bf 8};
   \node[above=0.01cm,font=\small] at (v2) {$v_2$};
   \node[right=0.001,font=\small] at (v3) {$v_3$};
   \node[right=0.1cm,below=0.001cm,font=\small,font=\bfseries\color{black}] at (v3) {\bf 8};
  \node[below=0.01cm,font=\small] at (v4) {$v_4$};
  \node[right=0.01cm,font=\small,font=\bfseries\color{black}] at (v4) {\bf 8};
    \node[below=0.01cm,font=\small] at (v5) {$v_5$};
  \node[left=0.01cm,font=\small,font=\bfseries\color{black}] at (v5) {\bf 8};
  \node[left=0.01cm,font=\small] at (v6) {$v_6$};
  \node[left=0.1cm,below=0.0015cm,font=\small,font=\bfseries\color{black}] at (v6) {\bf 8};
  \node[left=0.7cm,below=0.5cm] at (v4) {$\eta=14,\gamma=24$};
   \node[left=0.7cm,below=1cm] at (v4) {(6)};
 \end{tikzpicture}
 \begin{tikzpicture}[u/.style={fill=black,minimum size =4pt,ellipse,inner sep=1pt},node distance=1.5cm,scale=0.2]
\node[u] (v1) at (120:6.5){};
\node[u] (v2) at (60:6.5){};
\node[u] (v3) at (0:6.5){};
\node[u] (v4) at (300:6.5){};
\node[u] (v5) at (240:6.5){};
\node[u] (v6) at (180:6.5){};

  \draw (v1) -- (v2);
  \draw  (v1) -- (v3);
  \draw (v1) -- (v4);
   \draw[bend left=10] (v1) to  (v4);
  \draw (v1) -- (v5);
  \draw[bend left=10] (v1) to  (v5);
  \draw (v2) -- (v3);
  \draw (v2) -- (v4);
  \draw (v2) -- (v5);
  \draw (v3) -- (v4);
  \draw (v3) -- (v5);
  \draw[dashed] (v4) -- (v5);
  \draw[dashed] (v1) -- (v6);
  \draw (v2) -- (v6);
  \draw (v3) -- (v6);
   \draw (v4) -- (v6);
   \draw[bend left=10] (v4) to  (v6);
  \draw  (v5) -- (v6);
  \draw[bend left=10] (v5) to  (v6);

\node[left=0.1cm,font=\small] at (v1) {\bf6};
    \node[above=0.01cm,font=\small] at (v1) {$v_1$};
  \node[right=0.1cm,font=\small,font=\bfseries\color{black}] at (v2) {\bf 5};
   \node[above=0.01cm,font=\small] at (v2) {$v_2$};
   \node[right=0.001,font=\small] at (v3) {$v_3$};
   \node[right=0.05cm,below=0.001cm,font=\small,font=\bfseries\color{black}] at (v3) {\bf 5};
  \node[below=0.01cm,font=\small] at (v4) {$v_4$};
  \node[right=0.01cm,font=\small,font=\bfseries\color{black}] at (v4) {\bf 6};
    \node[below=0.01cm,font=\small] at (v5) {$v_5$};
  \node[left=0.01cm,font=\small,font=\bfseries\color{black}] at (v5) {\bf 6};
  \node[left=0.01cm,font=\small] at (v6) {$v_6$};
  \node[left=0.1cm,below=0.0015cm,font=\small,font=\bfseries\color{black}] at (v6) {\bf 6};
  \node[left=0.7cm,below=0.5cm] at (v4) {$\eta=10,\gamma=17$};
  \node[left=0.7cm,below=1cm] at (v4) {(7)};
 \end{tikzpicture}
 \begin{tikzpicture}[u/.style={fill=black,minimum size =4pt,ellipse,inner sep=1pt},node distance=1.5cm,scale=0.2]
\node[u] (v1) at (120:6.5){};
\node[u] (v2) at (60:6.5){};
\node[u] (v3) at (0:6.5){};
\node[u] (v4) at (300:6.5){};
\node[u] (v5) at (240:6.5){};
\node[u] (v6) at (180:6.5){};

  \draw (v1) -- (v2);
  \draw (v1) -- (v3);
  \draw (v1) -- (v4);
  \draw (v1) -- (v5);
  \draw (v2) -- (v3);
  \draw (v2) -- (v4);
  \draw (v2) -- (v5);
  \draw(v3) -- (v4);
  \draw (v3) -- (v5);
  \draw[dashed] (v4) -- (v5);
  \draw (v1) -- (v6);
  \draw (v2) -- (v6);
  \draw (v3) -- (v6);
   \draw (v4) -- (v6);
  \draw (v5) -- (v6);
  \draw[bend left=10] (v1) to  (v4);
  \draw[bend left=10] (v1) to  (v5);
  \draw[bend left=10] (v2) to  (v5);
  \draw[bend left=10] (v2) to  (v4);
  %\draw[bend left=10] (v2) to  (v3);
  \draw[bend left=10] (v3) to  (v4);
  \draw[bend right=10] (v3) to  (v5);
    \draw[bend right=10] (v4) to  (v6);
        \draw[bend left=10] (v5) to  (v6);

 \node[left=0.1cm,font=\small] at (v1) {\bf7};
    \node[above=0.01cm,font=\small] at (v1) {$v_1$};
  \node[right=0.1cm,font=\small,font=\bfseries\color{black}] at (v2) {\bf 7};
   \node[above=0.01cm,font=\small] at (v2) {$v_2$};
   \node[right=0.001,font=\small] at (v3) {$v_3$};
   \node[right=0.1cm,below=0.001cm,font=\small,font=\bfseries\color{black}] at (v3) {\bf 7};
  \node[below=0.01cm,font=\small] at (v4) {$v_4$};
  \node[right=0.01cm,font=\small,font=\bfseries\color{black}] at (v4) {\bf 8};
    \node[below=0.01cm,font=\small] at (v5) {$v_5$};
  \node[left=0.01cm,font=\small,font=\bfseries\color{black}] at (v5) {\bf 8};
  \node[left=0.01cm,font=\small] at (v6) {$v_6$};
  \node[left=0.1cm,below=0.0015cm,font=\small,font=\bfseries\color{black}] at (v6) {\bf 7};
  \node[left=0.7cm,below=0.5cm] at (v4) {$\eta=13,\gamma=22$};
  \node[left=0.7cm,below=1cm] at (v4) {(8)};
 \end{tikzpicture}
 \end{center}
 \caption{\small The multigraph $H_1$ defined for cases (1)-(8) in the proof of Claim~ \ref{lem:H0complete} and the degree of each vertex in $H_1$ is listed.}\label{fig:H0complete}
 \end{figure}
Here, we prove that none of the cases happens by hiring the same argument as that used in the proof of Case 1 in Claim~\ref{lem:H0trianglegfree}. For each $i\in [8]$, let $H_1$ be a multigraph obtained from $H^*_0$ by adding edges, as shown in Figure~\ref{fig:H0complete}(i).
Furthermore, for each edge $vw\in E(H^*)$, let $c(vw)$ be the number of times that $vw$ is counted while summing over all the edges of $H_1$ in the left side of inequality~\eqref{equa:degreesum}. Then, $c(vw)=d_{H_1}(v)+d_{H_1}(w)-\mu_{H_1}(vw)$.
Let  $$\gamma:=|E(H_1)| ~\text{and}~ \eta:= \max_{vw\in E(H^*)}c(vw),$$ as shown in Figure~\ref{fig:H0complete} for each case. Then, inequality~\eqref{equa:degreesum}, now, can be rewritten as
$$\eta s^*\ge \gamma(s^*-(D-2)),$$  which can be simplified to $s^*\le \frac{\gamma}{\gamma-\eta}(D-2)<\frac{5}{2}D-60$, a contradiction.

Therefore, $H^*_0$ is a complete graph.
\end{proof}

%%%%%%%%%%%%%%%%%%%%%%%%%%%%%%%%%%%%%%%%%%%%%%%%%%%%%%%%%%%%%%%%%%%%%%%%%%%%%%%%%%%
%%%%%%%%%%%%%%%%%%%%%%%%%%%%%%%%%%%%%%%%%%%%%%%%%%%%%%%%%%%%%%%%%%%%%%%%%%%%%%%%%%%
\subsection{Final step of the Proof  of Theorem \ref{thm:Main}}\label{subsection-three}
In this subsection, we complete the proof of Theorem \ref{thm:Main}.   From the previous subsection, we know that $H^*_0$ is a complete graph. We first show that the multigraph $H^*$ is almost balanced.  That is, in Claim~\ref{claim:degH^*_0complete}, we show that
each vertex $v$ in $H^*$ has $\frac{5}{6}D - c_1 \leq d_{H^*}(v) \leq \frac{5}{6}D + c_1'$ and, in Claim~\ref{claim:sizeofVij}, we show that the multiplicity of each edge $uv$ in $H^*$ satisfies $\mu(uv) \geq \frac{D}{6}-c_2$ where $c_1$, $c_1'$ and $c_2$ are small enough
compared to $D$.
Next, by introducing  new terms, we compute the size of $S\setminus S^*$ in Claim~\ref{lower-D(ij)}--Claim~\ref{size of S(ij)}.
Finally, we finish the proof of Theorem~\ref{thm:Main} at the end of this subsection.

\medskip
First, we show the following claim.
\begin{Claim}\label{claim:degH^*_0complete}
Let $v\in T^*$. Then $\frac{25D-9s^*-50}{3}\ge d_{H^*}(v)\ge  \frac{3s^*-5D+10}{3}$.
\end{Claim}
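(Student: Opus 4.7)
\textbf{Proof plan for Claim~\ref{claim:degH^*_0complete}.}

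The plan is to exploit the fact, just proved in Claim~\ref{lem:H0complete}, that $H^*_0 = K_6$, so every pair $v_iv_j$ with $i\ne j$ lies in $E(H^*)$ and Claim~\ref{lem:degreeinH*}(1) applies to all of them. The lower bound will be obtained by summing the Claim~\ref{lem:degreeinH*}(1) inequality over the $5$ edges of $H^*_0$ incident to a fixed $v \in T^*$, and the upper bound will then follow immediately by applying the lower bound to the other five vertices together with the handshaking identity $\sum_{i=1}^{6} d_{H^*}(v_i) = 2s^*$.

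For the lower bound, fix $v \in T^*$ and sum the inequality $d_{H^*}(v) + d_{H^*}(w) - \mu(vw) \ge s^*-D+2$ from Claim~\ref{lem:degreeinH*}(1) over all $w \in N_{H^*}(v) = T^*\setminus\{v\}$. The left-hand side becomes
\[
5\,d_{H^*}(v) + \sum_{w \ne v} d_{H^*}(w) - \sum_{w\ne v} \mu(vw) \;=\; 5\,d_{H^*}(v) + \bigl(2s^* - d_{H^*}(v)\bigr) - d_{H^*}(v) \;=\; 3\,d_{H^*}(v) + 2s^*,
\]
while the right-hand side is $5(s^*-D+2)$. Rearranging gives $d_{H^*}(v) \ge \tfrac{3s^*-5D+10}{3}$, as required.

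For the upper bound, applying the lower bound just obtained to each of the five vertices of $T^*\setminus\{v\}$ yields $\sum_{w\ne v} d_{H^*}(w) \ge 5\cdot\tfrac{3s^*-5D+10}{3}$, and using $\sum_{w\ne v} d_{H^*}(w) = 2s^* - d_{H^*}(v)$ we obtain
\[
d_{H^*}(v) \;\le\; 2s^* - 5\cdot\frac{3s^*-5D+10}{3} \;=\; \frac{25D-9s^*-50}{3}.
\]
This completes the argument. There is no substantial obstacle here: the whole claim is a direct consequence of $H^*_0 = K_6$ (Claim~\ref{lem:H0complete}), Claim~\ref{lem:degreeinH*}(1), and the handshaking identity. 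The key point worth flagging is just that the bound $\tfrac{5(s^*-2D+2)}{4}$ from Claim~\ref{lem:degreeinH*}(4) (obtained by averaging Claim~\ref{lem:degreeinH*}(3) over $w$) is weaker than the one above; the improvement comes from summing Claim~\ref{lem:degreeinH*}(1) itself rather than averaging (3), which avoids losing a factor through the $|N_{H^*}(v)|-1$ denominator.
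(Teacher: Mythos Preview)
Your proof is correct and follows essentially the same approach as the paper: sum the inequality from Claim~\ref{lem:degreeinH*}(1) over the five edges of $H^*_0$ incident to a fixed vertex, use the handshaking identity for the lower bound, and then deduce the upper bound from the lower bound applied to the other five vertices. (In fact the paper's proof contains a typo, citing Claim~\ref{lem:degreeinH*}(3) where it means~(1); you have the correct reference.)
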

\begin{proof}
Since $H^*_0$ is a complete graph, by Claim~\ref{lem:degreeinH*}(3) for distinct $i,j\in [6]$,
$$d_{H^*}(v_i)+d_{H^*}(v_j)-\mu(v_iv_j)\ge s^*-D+2.$$
Then
$$
\begin{aligned}&\left(\sum_{i\in [6]\setminus\{j\}}d_{H^*}(v_i)\right)+5d_{H^*}(v_j)-\sum_{i\in [6]\setminus\{j\}}\mu(v_iv_j)\\
&=\left(\sum_{v\in T^*}d_{H^*}(v)\right)+3d_{H^*}(v_j)\\
&\ge 5(s^*-D+2).
\end{aligned}$$
Thus, for each vertex $v_j$ in $H^*$,
\[d_{H^*}(v_j)\ge\frac{ 5(s^*-D+2)-2s^*}{3}=\frac{3s^*-5D+10}{3}.\]
Therefore, since $|T^*| = 6$, we have that
$$
d_{H^*}(v) =  2s^*- \sum_{u \in T^* \setminus \{v\}} d_{H^*} (u)
\le2s^*-5\frac{3s^*-5D+10}{3}\\
=\frac{25D-9s^*-50}{3}.
$$
\end{proof}
Note that Claim \ref{claim:degH^*_0complete} implies that for each vertex $v$ in $H^*$, we have  that $\frac{5}{6}D - 57 \leq d_{H^*}(v) \leq \frac{5}{6}D + 164$.

\begin{figure}[htbp]
\begin{center}
\begin{tikzpicture}
  \draw (0.6,0) circle (1.0cm);
  \node[below=1.0cm] at (0.6,0) {$R^*$};
  \node[right=0.4cm] at (0.6,0) {$W$};
  \draw (2,0) circle (1.0cm);
 \node[below=1.0cm] at (2,0) {$S$};
 \node at (2,0) {$S^*$};
  \draw (3.4,0) circle (1.0cm);
\node[below=1.0cm] at (3.4,0) {$T^*$};
\node[left=0.4cm] at (3.4,0) {$U$};
\end{tikzpicture}
\end{center}
\caption{The relations between sets $S$, $R^*$ and $T^*$.}\label{Fig:SR*T*}
\end{figure}
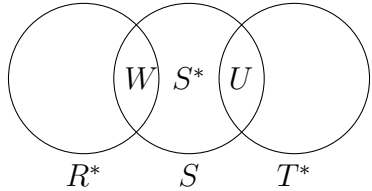

\medskip

Recall that $V(G)=V(G^*)=R^*\cup S^*\cup T^*$. Let $W:=S\cap R^*$ and $U:=S\cap T^*$, as shown in Figure~\ref{Fig:SR*T*}. Then $S=W\cup S^*\cup U$ and $W\cap S^*=U\cap S^*=\emptyset$.
We establish a claim which tells us  the adjacency between vertices in  $W\cup U$ and $T^*$ in the graph $G$.
\begin{Claim}\label{claim:UW}
\begin{itemize}
\item [(1)] Let $v\in U$. Then $2\le|N_G(v)\cap T^*|\le 5$.
\item [(2)] Let $v\in W$. Then $2\le|N_G(v)\cap T^*|\le 6$.
\end{itemize}
\end{Claim}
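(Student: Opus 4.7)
The upper bounds in both parts are immediate from $|T^*|=6$. For $v\in U\subseteq T^*$, the set $N_G(v)\cap T^*$ cannot contain $v$ itself, so $|N_G(v)\cap T^*|\le|T^*|-1=5$; for $v\in W\subseteq R^*$ we have $v\notin T^*$, so $|N_G(v)\cap T^*|\le|T^*|=6$. The substantive content of the claim is therefore the uniform lower bound $|N_G(v)\cap T^*|\ge 2$ for every $v\in U\cup W=S\setminus S^*$, which is exactly the statement of Subclaim~\ref{claim-T}. My plan is to rerun that proof in the present regime $|T^*|=6$; the value of $|T^*|$ enters only through the bound $|Y_2|\le 2(|T^*|+1)$, and the final arithmetic still delivers the required contradiction.

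Concretely, suppose for contradiction that some $v\in S\setminus S^*$ satisfies $|N_G(v)\cap T^*|\le 1$. Since $v\in S$ and $S$ is a clique in $G^2$, every vertex of $S^*$ lies at distance at most $2$ from $v$ in $G$, so $|N_{G^2}(v)\cap S^*|=s^*$. Following Subclaim~\ref{claim-T}, I define $Y_i:=\{w\in N_G(v)\setminus T^*:|N_G(w)\cap S^*|=i\}$ for $i\in\{1,2\}$ and split $Y_2=X_1\cup X_2$ according to whether $w$ precedes $v$ in $\sigma$. The bound $|X_2|\le 2$ is immediate from the 2-degeneracy of $\sigma$, and $|X_1|\le 2|T^*|$ is obtained by the same path-tracing argument as in Subclaim~\ref{claim-T}: to each $w\in X_1$ one attaches a $\sigma$-increasing path $u_1 w_1 w v$ with $u_1\in T^*$ and $w_1\in S^*$, and then 2-degeneracy of $\sigma$ caps the total count. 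So $|Y_2|\le 2(|T^*|+1)=14$, and counting the $S^*$-vertices reached from $v$ directly, through $R^*$, and through the at most one neighbor in $T^*$ yields
\[
s^*\ \le\ |N_G(v)\cap S^*|+|Y_1|+2|Y_2|+|N_G(v)\cap T^*|\cdot D\ \le\ d_G(v)+|Y_2|+D\ \le\ 2D+14.
\]
This contradicts $s^*>\tfrac{5}{2}D-60$ as soon as $D>148$, and in particular for $D\ge D_0$.

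The one point requiring care, and thus the main (if minor) obstacle, is that in case (1) the vertex $v$ itself lies in $T^*$, whereas Subclaim~\ref{claim-T} was stated for $v\in S\setminus S^*$ without specifying whether $v$ sits in $R^*$ or in $T^*$. However, the path-tracing step uses only the relative $\sigma$-order $u_1<w_1<w<v$, which is built into the definitions of $X_1$ and the path $P_w$, together with the 2-degeneracy of $\sigma$; it never appeals to $v\notin T^*$. So the argument applies uniformly to both $v\in U$ and $v\in W$, delivering the desired lower bound in both parts.
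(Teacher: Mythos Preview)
Your proposal is correct and takes essentially the same approach as the paper. The paper's own proof simply invokes Subclaim~\ref{claim-T} (noting that its argument carries over when $|T^*|=6$; indeed the final line $s^*\le D+14+D$ in that subclaim already used $2(|T^*|+1)\le 14$), and leaves the trivial upper bounds implicit. You spell out both the upper bounds and the re-run of Subclaim~\ref{claim-T} explicitly, but the substance is identical.
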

\begin{proof}
By Claim \ref{claim-T},   if $v \in S\setminus S^*$ and $|T^*|=6$, then $|N_G(v)\cap T^*|\ge 2$. Since $S\setminus S^*=W\cup U$ by the definition of $W$  and $U$, we have that (1) and (2) hold.
\end{proof}

\medskip
Let $U_i:=\{u\in U: |N_G(u)\cap T^*|=i\}$ and $\alpha_i:=|U_i|$ where $i\in [5]$. 
Observe that by Claim~\ref{claim:UW}, $\alpha_1=0$. Let $W_i:=\{u\in W: |N_G(u)\cap  T^*|=i+1\}$ and $\beta_i:=|W_i|$  where $i\in [5]$.
Then $$|S\setminus S^*|=\sum^5_{i=1}(\alpha_i+\beta_i)>\frac{5}{2}D-s^*.$$

\medskip
Next, we start to count the number of vertices in $S\setminus S^*$. We use double counting strategy. For this, we
define $D_{i,j}$, $S_{i, j}$, and $V_{i, j}$ as follows.

\begin{definition} \label{def-DSV} \rm
Let $T^* = \{v_1,v_2,v_3,v_4,v_5,v_6\}$. \\
\begin{enumerate}[(1)]
\item For $1\le i<j\le 6$, let \[D_{i,j}:=\{v\in S^*: N_{G^*}(v)\cap \{v_i,v_j\}=\emptyset\}\]  and let $|D_{i,j}|=d_{i,j}$.
Note that each vertex in $D_{i, j}$  corresponding to an edge in  $H^*$ that are incident to neither  $v_i$ nor $v_j$.  And note that $d_{i,j}\le D-2$ by Claim \ref{lem:degreeinH*} (1), since $H_0^*$ is a complete graph.
\item For $1\le i<j\le 6$, let \[
S_{i,j}:=\{v\in S \setminus (S^*\cup \{v_i,v_j\}): ~N_{G}(v)\cap \{v_i,v_j\} =\emptyset \} \]  and let $s_{i,j}:=|S_{i,j}|$.  Note that $S_{i,j}$ is the set of vertices in 
$S \setminus  (S^*\cup \{v_i,v_j\})$ that are adjacent to neither $v_i$ nor $v_j$.

\item
For each edge $v_iv_j\in E(H^*_0)$, let $V_{i,j}$ be the set of vertices in $S^*$ that correspond to edges between $v_i$ and  $v_j$. Then, by Construction~\ref{construction-H*}, we have that  $|V_{i,j}|=\mu(v_iv_j)$.
\end{enumerate}
\end{definition}

\medskip
As outlined above, we will show that $H^*$ is almost balanced.  By applying Claim~\ref{claim:degH^*_0complete}, we show that for every distinct $i,j\in [6]$,
$|V_{i,j}|=\mu(v_iv_j)\ge \frac{D}{6}-2000$.  This is formalized in the following claim.

\begin{claim}\label{claim:sizeofVij}For every distinct $i,j\in [6]$,
$|V_{i,j}|=\mu(v_iv_j)\ge \frac{87s^*-217D+374}{3}$.
 Furthermore, $|V_{i,j}|>\frac{D}{6}-2000$.
\end{claim}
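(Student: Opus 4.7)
The plan is to use the identity
\[
\mu(v_iv_j) \;=\; d_{H^*}(v_i) + d_{H^*}(v_j) + M_{ij} - s^*,
\]
where $M_{ij}$ denotes the total multiplicity of edges of $H^*$ with both endpoints in $T^*\setminus\{v_i,v_j\}$. The identity follows by partitioning $E(H^*)$ into (i) the edge $v_iv_j$ itself, (ii) the edges incident to exactly one of $\{v_i,v_j\}$, and (iii) the edges of $H^*[T^*\setminus\{v_i,v_j\}]$, giving $s^* = \mu(v_iv_j) + (d_{H^*}(v_i)+d_{H^*}(v_j)-2\mu(v_iv_j)) + M_{ij}$. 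It then suffices to lower-bound each of $d_{H^*}(v_i)+d_{H^*}(v_j)$ and $M_{ij}$.

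For the degree sum, I plan to apply the upper bound $d_{H^*}(v_k) \leq \frac{25D-9s^*-50}{3}$ from Claim~\ref{claim:degH^*_0complete} to the four vertices $v_k \in T^*\setminus\{v_i,v_j\}$, together with the handshake identity $\sum_{v\in T^*}d_{H^*}(v)=2s^*$, obtaining $d_{H^*}(v_i)+d_{H^*}(v_j) \geq 2s^* - \tfrac{4(25D-9s^*-50)}{3}$. For $M_{ij}$, I plan a double-counting identity across all $\binom{6}{2}=15$ pairs: each edge $v_kv_\ell$ of $H^*$ appears in exactly $\binom{4}{2}=6$ of the quantities $M_{i'j'}$, namely those for the six pairs $\{i',j'\}$ disjoint from $\{k,\ell\}$, so $\sum_{1\le i'<j'\le 6} M_{i'j'} = 6s^*$. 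Combined with the uniform upper bound $M_{i'j'} \leq D-2$ --- which is Remark~\ref{remark-basic} applied to the edge $v_{i'}v_{j'}$, since the edges not sharing an endpoint with $v_{i'}v_{j'}$ are precisely those counted in $M_{i'j'}$ --- this forces $M_{ij} \geq 6s^* - 14(D-2) = 6s^* - 14D + 28$. Plugging both bounds into the identity yields a linear lower bound of the form $\frac{\alpha s^*-\beta D+\gamma}{3}$.

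Matching this to the precise coefficients $(87,\,-217,\,374)$ stated in the claim is routine algebra; landing exactly on these numbers may require a modest sharpening, for instance summing Claim~\ref{lem:degreeinH*}(1) over the ten edges of the $K_5$ induced on $T^*\setminus\{v_i\}$ to improve the per-vertex bound to $d_{H^*}(v_i)\leq \frac{10D-3s^*-20}{3}$, or combining with Claim~\ref{claim:edgesum} applied to suitable pairs of disjoint edges. The main obstacle will be bookkeeping rather than any new conceptual ingredient; the structural mechanism (handshake identity, $M_{ij}$ sum identity, and Remark~\ref{remark-basic}) is essentially forced once the displayed identity is written. Finally, for the ``furthermore'' clause, I substitute the Remark~\ref{sizeofS*} bound $s^* > \tfrac{5}{2}D - 60$ into $\frac{87s^*-217D+374}{3}$: because $87\cdot\tfrac{5}{2} - 217 = \tfrac{1}{2}$, the $D$-terms collapse and the bound becomes $\frac{D}{6}$ minus an explicit constant (at most $\frac{87\cdot 60 - 374}{3}\approx 1615$), which comfortably exceeds $\frac{D}{6} - 2000$ once $D\ge D_0$.
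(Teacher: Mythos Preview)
Your approach is correct and genuinely different from the paper's. The paper argues by contradiction: it supposes $\mu(v_iv_j)$ is small, uses pigeonhole to find some $k$ with $\mu(v_iv_k)\ge \tfrac14\bigl(d_{H^*}(v_i)-\mu(v_iv_j)\bigr)$, and then checks that $d_{H^*}(v_i)+d_{H^*}(v_k)-\mu(v_iv_k)<s^*-D+2$, contradicting Claim~\ref{lem:degreeinH*}(1). Your route is direct: the identity $\mu(v_iv_j)=d_{H^*}(v_i)+d_{H^*}(v_j)+M_{ij}-s^*$ together with the handshake bound and the $M_{ij}$ bound (which is exactly Claim~\ref{lower-D(ij)} in the paper) yields a closed-form lower bound with no case analysis.

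One correction: your direct computation does \emph{not} produce the stated coefficients $(87,-217,374)$. Carrying out your algebra gives
\[
\mu(v_iv_j)\ \ge\ \frac{42s^*-100D+200}{3}+(6s^*-14D+28)-s^*\ =\ \frac{57s^*-142D+284}{3},
\]
and no ``modest sharpening'' of your ingredients will recover the paper's constants, which are an artifact of the pigeonhole argument rather than an intrinsic quantity. This is not a defect: your bound is actually \emph{stronger} than the paper's whenever $s^*\le \tfrac{5}{2}D-3$, and since the only downstream use of this claim is the ``furthermore'' inequality $|V_{i,j}|>\tfrac{D}{6}-2000$, your bound (which gives $|V_{i,j}|>\tfrac{D}{6}-1046$ after substituting $s^*>\tfrac{5}{2}D-60$) is more than sufficient. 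So drop the remark about matching the exact coefficients and simply state your own bound; it does the job and is arguably cleaner.
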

\begin{proof}
Suppose that $\mu(v_iv_j)\le \frac{87s^*-217D+373}{3}$ for some  $i,j\in [6]$. Then, by the Pigeonhole principle, there exists $k\in[6]\setminus\{i,j\}$ such that $\mu(v_iv_k)\ge  \frac{d_{H^*}(v_i)-\frac{87s^*-217D+373}{3}}{4}$. Then by Claim~\ref{claim:degH^*_0complete}  and Remark~\ref{sizeofS*}, we have that
$$
\begin{aligned}
d_{H^*}(v_i)+d_{H^*}(v_k)-\mu(v_iv_k)&\le\frac{3}{4}d_{H^*}(v_i)+d_{H^*}(v_k)+\frac{87s^*-217D+373}{12}\\
&\le \frac{7}{4}\left(\frac{25D-9s^*-50}{3}\right)+\frac{87s^*-217D+373}{12}\\
&\le \frac{175D-63s^*-350+87s^*-217D+373}{12}\\
&\le \frac{24s^*-42D+23}{12} \\
&< s^*-D+2,
\end{aligned}$$
a contradiction.  Thus, for every distinct $i,j\in [6]$, we have that
$|V_{i,j}|=\mu(v_iv_j)\ge \frac{87s^*-217D+374}{3}$, and
$|V_{i,j}|\ge \frac{0.5D-87\times 60+374}{3}>\frac{D}{6}-2000$, since $s^*> \frac{5}{2}D-60$.
\end{proof}

\medskip
\begin{claim} \label{lower-D(ij)}
For $1\le i<j\le 6$, $d_{i,j}\ge 6s^*-14D+28$.
\end{claim}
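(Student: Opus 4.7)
The plan is to prove the bound by a clean double-counting argument on $\sum_{1 \le i < j \le 6} d_{i,j}$, combined with the upper bound $d_{i',j'} \le D - 2$ that Remark~\ref{remark-basic} supplies for every pair in the now-complete graph $H^*_0$.

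First I would compute $\sum_{1 \le i < j \le 6} d_{i,j}$ by switching the order of summation. By Construction~\ref{construction-H*} and Definition~\ref{def-DSV}(1), a vertex $u \in S^*$ lies in $D_{i,j}$ precisely when the edge of $H^*$ corresponding to $u$ has neither endpoint in $\{v_i, v_j\}$. Hence each edge of $H^*$ (counted with multiplicity) contributes to $d_{i,j}$ for exactly those pairs $\{i,j\} \subseteq [6]$ disjoint from its two endpoints, and there are $\binom{6-2}{2} = 6$ such pairs. Summing over all edges of $H^*$ gives
\[
\sum_{1\le i<j\le 6} d_{i,j} \;=\; 6\,|E(H^*)| \;=\; 6 s^*.
\]

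Next I would bound each $d_{i',j'}$ from above. Since $H^*_0$ is a complete graph by Claim~\ref{lem:H0complete}, we have $\mu(v_{i'}v_{j'}) \ge 1$ for every distinct $i',j' \in [6]$, so $v_{i'}v_{j'} \in E(H^*)$. Applying Remark~\ref{remark-basic} to this edge yields $|E(H^*) \setminus (E(v_{i'}) \cup E(v_{j'}))| \le D-2$, and the left-hand side is exactly $d_{i',j'}$ by definition. Thus $d_{i',j'} \le D-2$ for all $15$ pairs $\{i',j'\}$.

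Finally, isolating the term $d_{i,j}$ in the identity and bounding each of the remaining $14$ terms by $D-2$ gives
\[
d_{i,j} \;=\; 6 s^* - \!\!\sum_{\{i',j'\} \ne \{i,j\}}\!\! d_{i',j'} \;\ge\; 6 s^* - 14(D-2) \;=\; 6 s^* - 14 D + 28,
\]
as required. There is no real obstacle to overcome here: the proof is a one-line combinatorial identity plus the already-established per-pair upper bound, and it cleanly accommodates the worst case $s^* \approx \tfrac{5}{2}D - 60$ where bounds based on Claim~\ref{claim:sizeofVij} and Claim~\ref{claim:degH^*_0complete} alone appear to be too weak.
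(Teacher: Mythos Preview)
Your proposal is correct and follows essentially the same approach as the paper: both compute $\sum_{1\le i<j\le 6} d_{i,j} = 6s^*$ and then isolate $d_{i,j}$ using the per-pair bound $d_{i',j'}\le D-2$. You justify the identity and the upper bound slightly more explicitly (via the $\binom{4}{2}$ count and Remark~\ref{remark-basic}), whereas the paper quotes the bound from Definition~\ref{def-DSV}(1) (which in turn cites Claim~\ref{lem:degreeinH*}(1)), but these are the same argument.
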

\begin{proof}  Note that
$\sum_{1\le i<j\le 6}d_{i,j}=6|E(H^*)|=6s^*$. Since $d_{i',j'}\le D-2$ for each $1\le i'<j'\le 6$, we have that \[
d_{i,j} = 6s^* - \sum_{\{i',j'\} \neq \{i, j\}} d_{i',j'} \ge 6s^*-14(D-2) = 6s^*-14D+28.
\]
\end{proof}

Note that Claim \ref{lower-D(ij)} implies that for each $1\le i<j\le 6$, we have
$d_{i,j}\ge 6s^*-14D+28 > D - 332$, since $s^* >\frac{5}{2}D - 60$.
\medskip

\medskip
\begin{claim}
For each vertex $v\in V_{i,j}$, let $R_v:=\{w\in R^*\cap N_{G^*}(v): N_{G^*}(w)\cap D_{i,j}\neq\emptyset \}$. Then  we have that $|R_v|\ge d_{i,j}$.
\end{claim}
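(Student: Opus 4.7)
The plan is to exhibit an injection from $D_{i,j}$ into $R_v$ using the fact that $S^*$ is a clique in $(G^*)^2$ together with the local sparsity of $G^*$ at vertices of $R^*$. First, recall from Claim~\ref{claim:degvinT}(1) and Construction~\ref{construction-H*} that every vertex $v \in V_{i,j}$ satisfies $N_{G^*}(v) \cap T^* = \{v_i, v_j\}$, and by property (e) and $v \in S^*$, the remaining neighbors of $v$ in $G^*$ all lie in $R^*$.

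Next, I would fix an arbitrary $u \in D_{i,j}$ and locate a common neighbor of $u$ and $v$ in $G^*$. Since $S^*$ is a clique in $(G^*)^2$ and an independent set in $G^*$, the vertices $u$ and $v$ are at distance exactly $2$ in $G^*$, so they share a common neighbor $w_u$. This $w_u$ cannot equal $v_i$ or $v_j$ because $u \in D_{i,j}$ means $N_{G^*}(u) \cap \{v_i, v_j\} = \emptyset$, and $w_u$ cannot lie in $S^*$ because $S^*$ is independent in $G^*$. Thus $w_u \in R^* \cap N_{G^*}(v)$, and since $u \in N_{G^*}(w_u) \cap D_{i,j}$, we have $w_u \in R_v$.

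The final step is to verify that the assignment $\phi: D_{i,j} \to R_v$ defined by $u \mapsto w_u$ (choosing one common neighbor per $u$) is injective. Suppose $\phi(u_1) = \phi(u_2) = w$ for distinct $u_1, u_2 \in D_{i,j}$. Then $w$ is adjacent in $G^*$ to each of $v, u_1, u_2$, all of which belong to $S^*$. This gives $|N_{G^*}(w) \cap S^*| \ge 3$, contradicting Remark~\ref{rem:ScapV}, which says that every vertex of $R^*$ has at most two neighbors in $S^*$ (by 2-degeneracy of $G^*$). Therefore $\phi$ is injective, and $|R_v| \ge |D_{i,j}| = d_{i,j}$.

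The argument is essentially structural rather than computational; the only subtlety is confirming that the common neighbor between $v$ and each $u \in D_{i,j}$ must fall inside $R^* \cap N_{G^*}(v)$, which is where the hypothesis $u \in D_{i,j}$ (ruling out $v_i, v_j$) and the independence of $S^*$ (ruling out vertices in $S^*$) combine cleanly. Injectivity then follows immediately from the 2-degeneracy bound on $|N_{G^*}(w) \cap S^*|$ for $w \in R^*$.
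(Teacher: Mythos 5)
Your proposal is correct and follows essentially the same argument as the paper: the common neighbor of $v$ and each $u\in D_{i,j}$ is forced into $R^*\cap N_{G^*}(v)$ (hence into $R_v$), and the bound $|N_{G^*}(w)\cap S^*|\le 2$ from Remark~\ref{rem:ScapV} yields the injectivity giving $|R_v|\ge d_{i,j}$. You merely spell out more explicitly than the paper why the common neighbor cannot lie in $T^*$ or $S^*$, which is a harmless elaboration.
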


\begin{proof}
Since $v\in S^*$ and $S^*$ induces a complete subgraph of ${(G^*)}^2$, for each  $u\in D_{i,j}$, there is a vertex $w\in R^*$ such that $vw,wu\in E(G^*)$.  Recall that for each vertex $w\in R^*$, by Remark~\ref{rem:ScapV}, we have that $|N_{G^*}(w)\cap S^*|\le 2$. Thus, $|R_v|\ge d_{i,j}$ as $\{v\}\cup D_{i,j}\subseteq S^*$.
\end{proof}

%%%%%%%%%%%%%%%%%%%%%%%%%%%%%%%%%%%%%%%%%%%%%%%%%
\medskip

Next, we show a very important claim which plays central role in the proof.
We give an upper bound on the size of $S_{i, j}$ as follows.
Recall that $S_{i,j}$ is the set of vertices in $S \setminus (S^*\cup \{v_i,v_j\})$ that are adjacent to neither $v_i$ nor $v_j$, and $s_{i, j} = |S_{i,j}|$. 
The outline of the proof Claim \ref{size of S(ij)} is as follows.
We will show that $|S_{i,j}|$ is not too large.
If $|S_{i,j}|$ is large  enough, then there is a subset $Z$ of $S_{i,j}$ of size exactly $D-1-d_{i,j}$.  Then each vertex in $V_{i,j}$ should adjacent in $G^2$ to each vertex in $Z$. Then we show that there is some vertex $v$ in $V_{i,j}$ such that for each $z\in Z$, there is a vertex $w_z\in N_{G}(v)\setminus (R_v\cup \{v_i,v_j\})$ with $vw_z,zw_z\in E(G)$ where for distinct  $z_1,z_2\in Z$, we have that $w_{z_1}\neq w_{z_2}$. Then, along with $R_v$ and $v_i,v_j$, we have that the degree of $v$ in $G$ is at least $d_{i,j}+2+D-1-d_{i,j}>D$, a contradiction.

\begin{claim} \label{size of S(ij)}
For $1\le i<j\le 6$, we have that $s_{i,j}\le D-2-d_{i,j}$.
\end{claim}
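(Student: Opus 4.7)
The plan is to argue by contradiction. Assume $s_{i,j} \geq D - 1 - d_{i,j}$ and fix a subset $Z \subseteq S_{i,j}$ with $|Z| = D - 1 - d_{i,j}$. The goal is to exhibit a vertex $v \in V_{i,j}$ with $d_G(v) \geq D + 1$, contradicting $\Delta(G) \leq D$. For every $v \in V_{i,j}$, Remark~\ref{rem:ScapV}, Claim~\ref{claim:degvinT}(1), and the independence of $S^*$ in $G^*$ give the decomposition $N_G(v) = \{v_i, v_j\} \sqcup (N_G(v) \cap R^*)$, and the claim immediately preceding this one gives $R_v \subseteq N_G(v) \cap R^*$ with $|R_v| \geq d_{i,j}$. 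Therefore it suffices to find a $v \in V_{i,j}$ together with an injection $z \mapsto w_z$ from $Z$ into $(N_G(v) \cap R^*) \setminus R_v$ such that $w_z$ is a common $G$-neighbor of $v$ and $z$; this yields $d_G(v) \geq 2 + |R_v| + |Z| \geq 2 + d_{i,j} + (D - 1 - d_{i,j}) = D + 1$.

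The construction relies on three structural observations. First, every $z \in Z$ lies in $W \subseteq R^*$: if $z \in U \subseteq T^*$, then every common neighbor of $v$ and $z$ in $G$ belongs to $N_G(v) \cap N_G(z)$, and the edge-or-path witness is forced to avoid $\{v_i, v_j\}$ since $z \in S_{i,j}$, so the analysis below applies uniformly. Second, for every $(v, z) \in V_{i,j} \times Z$, we have $d_G(v, z) \leq 2$ because $v, z \in S$ and $S$ is a clique in $G^2$; since $z$ is adjacent to neither $v_i$ nor $v_j$ and $N_G(v) \subseteq \{v_i, v_j\} \cup R^*$, every distance-$2$ witness of $(v, z)$ must lie in $R^*$. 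Third, every such witness $w \in R^*$ satisfies $|N_G(w) \cap S^*| \leq 2$ by Remark~\ref{rem:ScapV}, so a fixed $w$ can serve as a common neighbor of a fixed $z$ with at most two different vertices of $V_{i,j}$. Combined with $|V_{i,j}| \geq \tfrac{D}{6} - 2000$ from Claim~\ref{claim:sizeofVij} and $|Z| \leq D - 1 - d_{i,j} \leq 330$ (using Claim~\ref{lower-D(ij)}), these observations tell us that each $z \in Z$ has many potential witnesses spread thinly across $V_{i,j}$, and in particular $|V_{i,j}|$ is vastly larger than $|Z|$.

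The main obstacle, I anticipate, is the combined requirement that the $w_z$'s simultaneously lie outside $R_v$ and be pairwise distinct. I plan to resolve this by a double-counting / averaging argument on triples $(v, z, w)$ with $v \in V_{i,j}$, $z \in Z$, and $w \in (N_G(v) \cap R^*) \setminus R_v$ witnessing $d_G(v, z) \leq 2$. The lower bound on the number of such triples follows from the clique property of $S$ in $G^2$ together with the tight control on $|R_v|$, while the at-most-two-service property and the degree bound $\Delta(G) \leq D$ control the upper bound. Since $|V_{i,j}|$ is much larger than $|Z|$, averaging produces a specific $v \in V_{i,j}$ for which the bipartite graph $B_v$ between $Z$ and $(N_G(v) \cap R^*) \setminus R_v$, whose edges record the common-neighborhood relation, has Hall's condition satisfied on the $Z$-side. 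The resulting perfect matching on $Z$ yields the required injection, hence the contradictory degree $d_G(v) \geq D + 1$, completing the proof.
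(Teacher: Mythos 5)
Your setup matches the paper's: assume $s_{i,j}\ge D-1-d_{i,j}$, fix $Z\subseteq S_{i,j}$ with $|Z|=D-1-d_{i,j}$ (so $|Z|<331$ by Claim~\ref{lower-D(ij)}), and aim to produce $v\in V_{i,j}$ whose witnesses for the $G^2$-adjacencies to $Z$, together with $R_v$ and $\{v_i,v_j\}$, force $d_G(v)>D$. Your three structural observations are essentially right (the first is moot: $z$ may lie in $U\subseteq T^*$, but the paper never needs $z\in R^*$, only that the \emph{witness} $w$ lies in $R^*\setminus R_v$, which follows exactly as you say). However, there is a genuine gap at the step you yourself flag as the main obstacle. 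Your resolution---averaging over triples $(v,z,w)$ and then invoking Hall's condition in $B_v$---does not close. The quantity your averaging controls is the number of triples at a typical $v$, but a large triple count for a fixed $v$ does not yield a matching saturating $Z$: nothing in your three observations prevents a single neighbor $w$ of $v$ from being the witness for \emph{every} $z\in Z$ (your ``at-most-two-service'' bound $|N_G(w)\cap S^*|\le 2$ limits how many $v$'s a fixed $w$ serves, not how many $z$'s a fixed $w$ serves for a given $v$, since $Z\cap S^*=\emptyset$). In that scenario Hall's condition fails badly for $v$, and averaging cannot rule it out for all $v$ simultaneously.

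The missing ingredient is a second, genuinely structural use of 2-degeneracy. The paper defines, for each pair $x,y\in Z$, the set $W_{x,y}$ of common neighbors of $x$ and $y$ that also have a neighbor in $V_{i,j}\setminus V_0$, and proves $|W_{x,y}|\le 5$: otherwise one extracts an induced subgraph on $\{x,y,v_i,v_j\}\cup B_{x,y}\cup W_{x,y}$ (with $B_{x,y}:=N_G(W_{x,y})\cap(V_{i,j}\setminus V_0)$, $|B_{x,y}|\ge 3$) in which every vertex has degree at least~3, contradicting 2-degeneracy. This bounds the set $Q$ of vertices $v\in V_{i,j}\setminus V_0$ having a neighbor (other than $v_i,v_j$) with two or more neighbors in $Z$ by $10\binom{331}{2}$, and the choice of $D_0$ then guarantees $V_{i,j}\setminus(Q\cup V_0)\neq\emptyset$. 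For any $v$ in that set the witnesses $w_{v,x}$ are \emph{automatically} distinct---no Hall/matching argument is needed. Without an analogue of the $|W_{x,y}|\le 5$ bound, your plan cannot establish the injectivity of $z\mapsto w_z$, so as written the proof is incomplete.
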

\begin{proof}
Suppose that $s_{i,j}>D-2-d_{i,j}$ for some $i, j$. Let $Z\subseteq S_{i,j}$ with $|Z|=D-1-d_{i,j}$, then by Claim \ref{lower-D(ij)}, we have that
\[|Z|=D-1-d_{i,j} \le D-1-(6s^*-14D+28)\le6 \left(\frac{5}{2}D-s^* \right)-29<331.\]
Let \[V_0=\{u\in V_{i,j}:N_G(u)\cap Z\neq\emptyset\}.\] Since $Z\subseteq S_{i,j}$, by
Remark~\ref{rem:ScapV}, we have that
\begin{equation}\label{size-V-zero}
|V_0|\le 2Z < 2 \times 331.
\end{equation}
Then for each vertex in $v\in V_{i,j}\setminus V_0$ and each vertex $u\in Z$, there should be a vertex $w\in R^*\cup T^*$ such that $vw,uw\in E(G)$. Since for each vertex  $v$ in $V_{i,j}$, $N_G(v)\cap T^*=\{v_i,v_j\}$ and each vertex in $S_{i,j}$ is  adjacent to neither $v_i$ nor $v_j$, we have that $w\in R^*$. Additionally, recall that for $v\in V_{i,j}$ each vertex in $R_v$ has one neighbor $v$ and another neighbor in $D_{i,j}\subseteq S$. Thus, by Remark~\ref{rem:ScapV} with $w\in R^*$, we have that $w\in R^*\setminus R_v$.
For any pair of vertices $x,y\in Z$, let
\[W_{x,y}=\{w\in V(G):wx,wy\in E(G)~\text{and}~N_G(w)\cap (V_{i,j}\setminus V_0)\neq\emptyset\}.\]
For any $w\in W_{x,y}$,  we have that $|N_G(w)\cap (V_{i,j}\setminus V_0)|\le 2$ by Remark~\ref{rem:ScapV} with $w\in R^*$ and $V_{i,j}\subseteq S^*$.

Next we claim that $|W_{x,y}|\le 5$.
Suppose that $|W_{x,y}|\ge 6$. Let $B_{x,y}:=N_G(W_{x,y})\cap (V_{i,j}\setminus V_0)$. Then $|B_{x,y}|\ge 3$. Otherwise the subgraph of $G$ induced by $B_{x,y}\cup W_{x,y}\cup \{x,y\}$ is not 2-degenerate. But $|B_{x,y}|\ge 3$ implies that the subgraph of $G$ induced by $\{x,y,v_i,v_j\}\cup B_{x,y}\cup W_{x,y}$ is not 2-degenerate since every vertex in the induced subgraph has degree at least 3.
Thus, $|W_{x,y}|\le 5$ for any pair of vertices in $Z$.  Hence, \[\left|\bigcup_{x,y\in Z}W_{x,y}\right|\le \sum_{x,y\in Z}|W_{x,y}|\le 5\times{{|Z|}\choose 2}< 5\times {331 \choose 2}.\]
\begin{figure}[htbp]
  \begin{center}
  \includegraphics[scale=0.18]{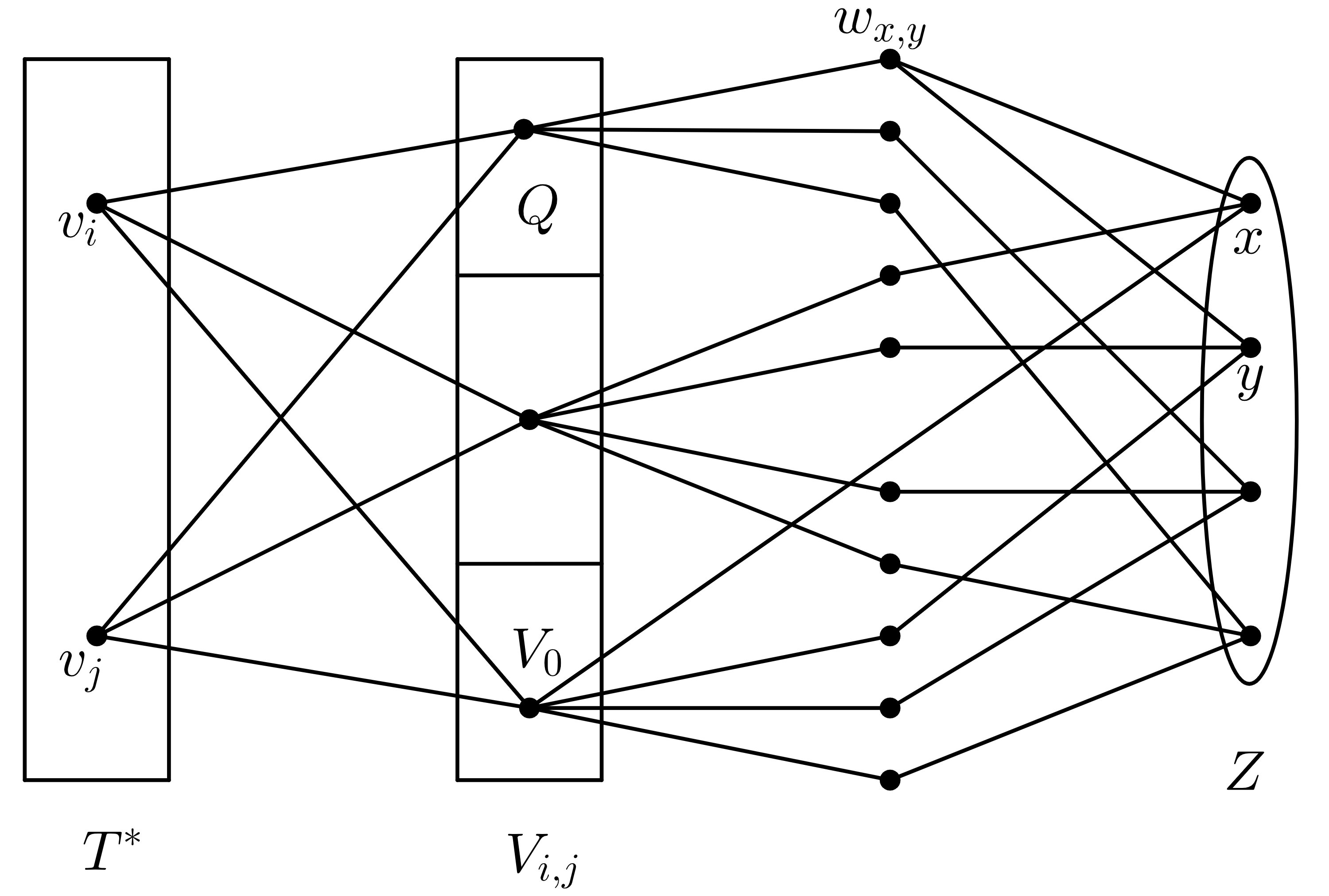}
  \end{center}
  \caption{ An example for adjacency in $G$ between vertices in $V_{i,j}$ and $Z$ }\label{fig:doublecounting}
\end{figure}

\noindent Let
\[Q=\{v\in V_{i,j}\setminus V_0: \text{there is a vertex $w\in N_G(v)\setminus\{v_i,v_j\}$ such that $|N_G(w)\cap Z|\ge 2$} \}.\]
Thus,  by Remark~\ref{rem:ScapV},
\begin{equation} \label{size-Q}
|Q|\le 2\left|\bigcup_{x,y\in Z}W_{x,y}\right|< 10\times{ 331\choose 2}.
\end{equation}

Now, we consider the subset $V_{i,j}\setminus (Q\cup V_0)$ (see Figure~\ref{fig:doublecounting}).
Note that  $|V_{i,j}|>\frac{D}{6}-2000$ from Claim~\ref{claim:sizeofVij} and
$D\ge D_0=6\times( 331\times 2+10\times{ 331\choose 2}+2000)$.
(Note that this is the only place where we need large value of $D_0$.)
So, by the inequalities  (\ref{size-V-zero}) and (\ref{size-Q}) we have that
\[
|V_{i,j}\setminus(Q\cup V_0)| \geq |V_{i,j}| - |Q| - |V_0| > 0.
\]
Thus the subset $V_{i,j}\setminus(Q\cup V_0)$ is nonempty.
Then for each vertex $v$ in $V_{i,j}\setminus(Q\cup V_0)$ and each vertex $x\in Z$, there is a vertex $w_{v,x}\in R^*\setminus R_v$ such that $vw_{v,x},xw_{v,x}\in E(G)$; furthermore,  for distinct $x,x'\in Z$, we have that $w_{v,x'}\neq w_{v,x}$ by the choice of $v$ in $V_{i,j}\setminus(Q\cup V_0)$.
 Then $d_G(v)\ge |R_v\cup \{v_i,v_j\}|+|Z|= d_{i,j}+2+|Z|>D$, a contradiction.
This completes the proof of Claim \ref{size of S(ij)}.
\end{proof}

\medskip
Hence, from Claim \ref{size of S(ij)}, we have the following property.
\begin{equation}\label{counting}
\begin{aligned}
\sum_{1\le i<j\le 6}s_{i,j}&\le \sum_{1\le i<j\le 6}(D-2-d_{i,j})\\
&= 15(D-2)-\sum_{1\le i<j\le 6}d_{i,j}\\&=15(D-2)-6|E(H^*)|\\
&=15D-6s^*-30\\
&=6\left(\frac{5}{2}D-s^*\right)-30.
\end{aligned}
\tag{$\mathcal{D}$}
\end{equation}

Note that the inequality (\ref{counting}) is very important in the final step of the proof.
Let $N_{G}[v]:=N_G(v)\cup \{v\}$. Next, we count how many times each vertex  in $S\setminus S^*$ appears on the left side of inequality~(\ref{counting}).

\begin{Counting} \label{count-spade}
Fix $i \in \{1, 2, 3, 4, 5\}$.  For each $v \in U_i \cup W_i$, we have $|T^* \setminus N_G[v]| = 5-i$, so $v$ is counted ${5-i \choose 2}$ times on the left side of inequality~(\ref{counting}).  Thus, we have the following.
\begin{enumerate}[(1)]
\item Each $v \in U_1\cup W_1$ is counted 6 times one the left side of inequality~(\ref{counting}).

\item Each $v \in U_2\cup W_2$ is counted 3 times one the left side of inequality~(\ref{counting}).

\item Each $v \in U_3\cup W_3$ is counted 1 time one the left side of inequality~(\ref{counting}).

\item Each $v \in U_4\cup W_4 \cup U_5\cup W_5$ is counted 0 time one the left side of inequality~(\ref{counting}).
\end{enumerate}
\end{Counting}
Therefore, from Counting \ref{count-spade} we have that
$$\sum_{1\le i<j\le 6}s_{i,j}=6(\alpha_1+\beta_1)+3(\alpha_2+\beta_2)+\alpha_3+\beta_3.$$

Recall that $|S\setminus S^*|=\sum^5_{i=1}(\alpha_i+\beta_i)$.
Since $|S\setminus S^*|>\frac{5}{2}D-s^*$, if we substitute $|S\setminus S^*|$ with
$\sum^5_{i=1}(\alpha_i+\beta_i)$ in inequality~(\ref{counting}), then we have that
$$6(\alpha_1+\beta_1)+3(\alpha_2+\beta_2)+\alpha_3+\beta_3<6\left(\sum_{i\in [5]}(\alpha_i+\beta_i)\right)-30.$$
Thus,
\begin{equation}\label{equa:mainineq}
30<3(\alpha_2+\beta_2)+5(\alpha_3+\beta_3)+6\sum_{i\in \{4,5\}}(\alpha_i+\beta_i).
\tag{$\mathcal{E}$}
\end{equation}

\medskip
 Let $J$ be the graph  obtained from the subgraph of $G$ induced by $(S\setminus S^*)\cup T^*$ by deleting edges that are not incident with any vertex in $T^*$. Observe that the graph $J$ is defined by inspiration from Counting~\ref{count-spade}.

\medskip
Recall that $U = S \cap T^*$.
Let $U^*=\{u^*:u\in U\}$. Then $|U^*|=|U|$. Let $J^*$ be the graph obtained from $J$ which is defined as follows.

$$\begin{aligned}
V(J^*)&=W\cup U^*\cup T^*,\\
E(J^*)&=\{ab\in E(H):a\in W,b\in T^*\} \cup \{u^*v:uv\in E(G),u\in U,v\in T^*\} \cup\\
~&~~~~\{u^*u:u\in U\}.
\end{aligned}
$$
Note that  for each $v\in W$, $|N_{J^*}(v)| = |N_{J}(v)|$ and for each $v\in U^*$, $|N_{J^*}(v)| = |N_{J}(v)\cap T^*| +1$.

\medskip
Next, we show that  actually  $J^*$ is a 2-degenerate bipartite graph.
 Finally, we use~Claim~\ref{claim:valueofxyz} to get a contradiction and finish our proof.

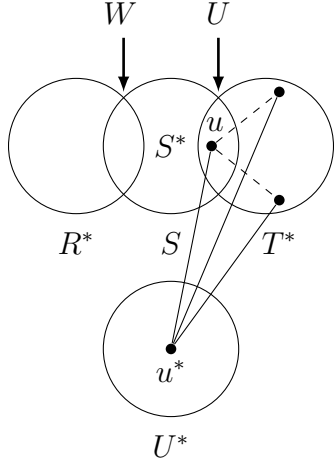
\begin{figure}[htbp]
\begin{center}
\begin{tikzpicture}[u/.style={fill=black,minimum size =4pt,ellipse,inner sep=1pt},node distance=1.5cm,scale=0.9]
\node[u] (v0) at (2.6,0){};
\node[u] (v1) at (3.6,0.8){};
\node[u] (v2) at (3.6,-0.8){};
\node[u] (v3) at (2,-3){};

  \node[below=1.0cm] at (0.6,0) {$R^*$};
  \node[above=0.4cm] at (1.25,1.2) {$W$};
  \node[above=0.05cm] at (2.65,0){$u$};
  \node[below=1.0cm] at (2,0) {$S$};
 \node at (2,0) {$S^*$};
 \node[below=1.0cm] at (3.6,0) {$T^*$};
\node[above=0.4cm] at (2.7,1.2) {$U$};
\node[below=0.01cm] at (2,-3) {$u^*$};
\node[below=1cm] at (2,-3) {$U^*$};
  \draw (2,0) circle (1.0cm);
  \draw (3.4,0) circle (1.0cm);
\draw[->, line width=1pt, >=latex, scale=2]  (0.65,0.8)--(0.65,0.4);
\draw[->, line width=1pt, >=latex, scale=2]  (1.35,0.8)--(1.35,0.4);
\draw (2,-3) circle (1.0cm);

\draw[dashed] (v0) to (v1);
\draw[dashed] (v0) to (v2);
  \draw (0.6,0) circle (1.0cm);
  \draw (v3) to (v1);
  \draw (v3) to (v2);
  \draw (v0) to (v3);

\end{tikzpicture}
\end{center}
\caption{An example of $J^*$ when $U\neq\emptyset$.}\label{Fig:J*}
\end{figure}

\medskip
\begin{claim}
The graph $J^*$ is a 2-degenerate bipartite graph.
\end{claim}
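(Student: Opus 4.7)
The plan is to handle bipartiteness and 2-degeneracy separately. For bipartiteness I will take the partition $A = T^*$ and $B = W \cup U^*$ and verify by inspection that each of the three edge families in the definition of $J^*$---type-1 edges $wv$ with $w \in W$ and $v \in T^*$, type-2 edges $u^*v$ with $u \in U$ and $v \in T^*$, and type-3 edges $u^*u$ with $u \in U \subseteq T^*$---places one endpoint in $A$ and the other in $B$.

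For 2-degeneracy I plan to show that every nonempty subgraph $H \subseteq J^*$ has a vertex of degree at most $2$. The key step is to form the $G$-projection
\[
V^G \;:=\; (V(H)\cap W)\cup (V(H)\cap T^*)\cup \{u\in U : u^*\in V(H)\}\subseteq V(G),
\]
which identifies each $u^*\in V(H)$ with the underlying $u\in U$. Since $G$ is 2-degenerate, so is $G[V^G]$, and hence some $y\in V^G$ satisfies $d_{G[V^G]}(y)\le 2$. I would then case-analyze on where $y$ sits: in $W$, in $T^*\setminus U$, or in $U$ with one or both of $y, y^*$ in $V(H)$. In all cases except the last, each $J^*$-neighbor of the relevant vertex of $V(H)$ (which is whichever of $y, y^*$ is present) injects under $u^*\mapsto u$ to a distinct $G$-neighbor in $V^G$, yielding either $d_H(y)\le 2$ or $d_H(y^*)\le 2$ directly.

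The main obstacle will be the case $y\in U$ with both $y, y^*\in V(H)$, because the type-$3$ edge $yy^*$ is counted in both $d_H(y)$ and $d_H(y^*)$ yet contributes only the single vertex $y$ to $V^G$. Writing $V_3 = V(H)\cap T^*$ and $V_2' = \{u\in U : u^*\in V(H)\}$, a direct count yields the identity
\[
d_H(y)+d_H(y^*)\;=\;d_{G[V^G]}(y)+|N_G(y)\cap V_2'\cap V_3|+2.
\]
When $d_{G[V^G]}(y)\le 1$ this immediately forces $\min(d_H(y), d_H(y^*))\le 2$. The delicate remaining sub-case is $d_{G[V^G]}(y)=2$, where the bound is only $6$; here one observes that \emph{both} $G$-neighbors of $y$ in $V^G$ must lie in $V_2'\cap V_3\subseteq U$, so the trouble is confined to the small subset $V_2'\cap V_3\subseteq T^*$ of size at most $|T^*|=6$. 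I would then iterate the 2-degeneracy argument inside the induced subgraph $G[V_2'\cap V_3]$: because this set is bounded, the iteration must terminate by exposing a vertex whose $G$-degree in the reduced context is at most $1$, at which point the displayed identity produces the required vertex of $V(H)$ of degree at most~$2$.
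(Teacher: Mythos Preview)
Your bipartiteness argument is fine, and the projection idea together with the displayed identity
\[
d_H(y)+d_H(y^*)=d_{G[V^G]}(y)+|N_G(y)\cap V_2'\cap V_3|+2
\]
is correct. The gap is in the final ``iteration'' step. Take $V(H)=\{a,b,c,a^*,b^*,c^*\}$ where $a,b,c\in U$ and $ab,bc,ca\in E(G)$; nothing in the hypotheses you invoke forbids this, since a triangle is $2$-degenerate and $U\subseteq T^*$ can have up to six vertices. Then $V^G=V_2'\cap V_3=\{a,b,c\}$, every $y$ satisfies $d_{G[V^G]}(y)=2$ with both neighbours in $V_2'\cap V_3$, and your identity gives $d_H(y)=d_H(y^*)=3$ for each $y\in\{a,b,c\}$; in fact $H\cong K_{3,3}$. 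Your proposed iteration ``inside $G[V_2'\cap V_3]$'' cannot expose a vertex of $G$-degree at most $1$ here, because $G[\{a,b,c\}]$ is $2$-regular, so the termination claim is false as stated. Boundedness of $V_2'\cap V_3$ does not help: what you would actually need is that $G[V_2'\cap V_3]$ is $1$-degenerate (a forest), and you have not argued this.

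For comparison, the paper does not search for a low-degree vertex in an arbitrary subgraph; instead it constructs an explicit $2$-degeneracy order $\sigma(J^*)$ from a $2$-degeneracy order $\sigma(J)$ of $J$ by, for each $u\in U$, inserting $u^*$ immediately before or after $u$ according to whether $u$ has a later $W$-neighbour. The verification is then local, checking only that $u$ and $u^*$ each acquire at most two later neighbours. If you want to salvage your subgraph approach, the missing ingredient is precisely a reason why $G[V_2'\cap V_3]$ (or $G[U]$) must be acyclic.
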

\begin{proof}
Observe that from the definition of the graph $J^*$,  we have that $J^*$ is a bipartite graph with bipartition $(W\cup U^*, T^*)$.  Now, we focus on the proof that $J^*$ is 2-degenerate.  Observe that if $U=\emptyset$, then by the definition of $J^*$, we have that $J^*$ is a subgraph of $J$ which means that $J^*$ is 2-degenerate.

It suffices to consider  the case when $U\neq\emptyset$ as shown in Figure~\ref{Fig:J*}. Note that, by the definition of $J^*$,  for $u\in U$ and $u^*\in U^*$ in $J^*$, we have that
\[\begin{aligned}N_{J}(u)&=(N_J(u)\cap W)\cup (N_J(u)\cap T^*)\\
&=(N_{J^*}(u)\cap W)\cup (N_{J^*}(u^*)\cap T^*)\setminus\{u\}\\
&=(N_{J^*}(u)\setminus\{u^*\})\cup (N_{J^*}(u^*)\setminus\{u\}),\\
\end{aligned}\]
where $(N_{J^*}(u)\setminus\{u^*\})\cap (N_{J^*}(u^*)\setminus\{u\})=\emptyset$.

Let $\sigma(J)$ be a 2-degeneracy order of $J$ where each vertex has at most two neighbors later.  To obtained a 2-degeneracy order $\sigma(J^*)$ of $J^*$ from $J$, we proceed as follows. For each $u\in U$, if there is a neighbor $w$ of $u$ in $W$ which is later than $u$ in $\sigma(J)$, then put $u^*$ directly before $u$ such that $u^*$ and $u$ are consecutive, otherwise, put $u^*$ directly after $u$ such that $u$ and $u^*$ are consecutive.
We call it the induced order of $J^*$, denoted  $\sigma(J^*)$.

Now, we prove that $\sigma(J^*)$ is a 2-degeneracy order of $J^*$. Note that for both cases, it suffices to check that $u$ and $u^*$ have at most two neighbors later in the order $\sigma(J^*)$.
For the former, since there is a neighbor $w$ of $u$ in $W$ which is later than $u$ in $\sigma$, by the above analysis, $u^*w\notin E(J^*)$.
Therefore, $u^*$ has at most $1+(2-1)=2$ neighbors later in $\sigma(J^*)$ while the number of neighbors of $u$ that are later does not increase.  For the latter, all the neighbors of $u$ that are later than $u$ in $\sigma(J)$ should be in $T^*$, then by the definition of $J^*$, such adjacency does not exist in $J^*$, therefore, $u$ has only one neighbor later in $\sigma(J^*)$ while there are at most 2 neighbors of $u^*$ inherited from $u$  later than $u^*$ in $\sigma(J^*)$.

Therefore, $J^*$ is a 2-degenerate bipartite graph.
\end{proof}

Define
$$
\begin{aligned}
x&:=|\{u:u\in W\cup U^*,|N_{J^*}(u)\cap T^*|=3\}|,\\
y&:=|\{u:u\in W\cup U^*,|N_{J^*}(u)\cap T^*|=4\}|,\\
z&:=|\{u:u\in W\cup U^*,|N_{J^*}(u)\cap T^*|\ge 5\}|.
\end{aligned}
$$
Then inequality~(\ref{equa:mainineq}) can be simplified as $$30<3x+5y+6z.$$
Since $J^*$ is a 2-degenerate bipartite graph, the following holds.
\begin{claim}\label{claim:valueofxyz}
\begin{itemize}
\item [(1)] $x+y+z\le 8$. Furthermore, if $x+y+z=8$, then $y=z=0$ and $x=8$.

\item [(2)] $y+z\le 4$. Furthermore, if $y+z=4$, then $z=0$ and $y=4$.

\item [(3)] $z\le 2$.
\end{itemize}
\end{claim}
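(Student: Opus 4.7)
The plan is to exploit two structural features of $J^*$: bipartiteness and 2-degeneracy. First note that $J^*$ is bipartite with parts $W\cup U^*$ and $T^*$: the edges $u^*u$ run from $U^*$ to $U\subseteq T^*$, and the other two edge types in the definition obviously cross. The key auxiliary fact is that every bipartite 2-degenerate graph $H$ on $n\ge 3$ vertices has $|E(H)|\le 2n-4$. I would prove this by induction on $n$: for $n=3$ a three-vertex bipartite graph has at most two edges (since $K_3$ is forbidden), and for $n\ge 4$ one deletes a vertex $v$ of degree at most $2$ supplied by 2-degeneracy and applies the inductive hypothesis to $H-v$, which is still bipartite and 2-degenerate on $n-1\ge 3$ vertices, yielding $|E(H)|\le 2(n-1)-4+2=2n-4$.

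For each $d\in\{3,4,5\}$, let $A_d\subseteq W\cup U^*$ be the set of vertices with at least $d$ neighbors in $T^*$ in $J^*$, and consider $H_d:=J^*[T^*\cup A_d]$. Since every neighbor of a vertex of $W\cup U^*$ in $J^*$ already lies in $T^*$, each element of $A_d$ has degree at least $d$ in $H_d$, so $|E(H_d)|\ge d\,|A_d|$. On the other hand, $H_d$ is a bipartite 2-degenerate graph on $|A_d|+6\ge 6$ vertices, so the auxiliary fact gives $|E(H_d)|\le 2|A_d|+8$. Combining yields $|A_d|\le 8/(d-2)$, hence $|A_3|\le 8$, $|A_4|\le 4$, and $|A_5|\le 8/3$; the integrality of $|A_5|$ then gives $z=|A_5|\le 2$, which is exactly part (3). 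This also proves $x+y+z\le 8$ in (1) and $y+z\le 4$ in (2).

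For the equality refinements in (1) and (2), I would sharpen the lower bound on $|E(H_d)|$ by distinguishing degrees. If $x+y+z=8$, then $|E(H_3)|\ge 3x+4y+5z$ combined with $|E(H_3)|\le 24$ forces $3(x+y+z)+y+2z=24+y+2z\le 24$, so $y=z=0$ and $x=8$. If $y+z=4$, then $|E(H_4)|\ge 4y+5z\le 16$ combined with $y+z=4$ forces $4(y+z)+z=16+z\le 16$, so $z=0$ and $y=4$. The main subtlety — and the only real obstacle — is recognizing that one must use the bipartite bound $2n-4$ rather than the generic 2-degenerate bound $2n-3$, since the latter only yields $x+y+z\le 9$ and fails to close the gap to the tight constant $8$.
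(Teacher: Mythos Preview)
Your proof is correct and takes essentially the same approach as the paper: define the induced subgraphs on $T^*$ together with the high-degree vertices of $W\cup U^*$, bound their edge count by $2n-4$ using bipartiteness and 2-degeneracy, and compare with the lower bound coming from the degree condition (including the refined lower bounds $3x+4y+5z$ and $4y+5z$ for the equality cases). The only cosmetic difference is that the paper derives the $2n-4$ bound by summing along a 2-degeneracy order and observing that the last three vertices span at most two edges, whereas you prove the same fact by induction.
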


\begin{proof}
Note that $J^*$ is a 2-degenerate bipartite graph. Let $\sigma(J^*)$ be a 2-degeneracy order of $J^*$ where each vertex has at most 2 vertices later. Let $\{u,v,w\}$ be the last three vertices in the order $\sigma(J^*)$. Then $|E(J^*[\{u,v,w\}])|\le 2$.

For $i\in [3]$,  let $Q_i:=\{v\in W\cup U^*:d_{J^*}(v)\ge i+2\}$. Let $J_i$ be the subgraph of $J^*$ induced by $Q_i\cup T^*$.  That is, $J_i:=J^*[Q_i\cup T^*]$.  Then $J_i$ is 2-degenerate
and  bipartite. From the observation $|E(J^*[\{u,v,w\}])|\le 2$, we have that for $i\in [3]$,
\[(i+2)|Q_i|\le\sum_{u\in Q_i}|N_{J_i}(u)\cap T^*|=\sum_{u\in Q_i}d_{J_i}(u)=|E(J_i)|
\le 2(|V(J_i)|-3)+2 \le 2(6+|Q_i|-3)+2,
\]
which simplifies to $|Q_i|\le 8/i$. Therefore, for $i=1$, we have that $x+y+z\le 8$; furthermore  equality holds only when $x=8$ and $y=z=0$. For $i=2$, we have that $y+z\le 4$; furthermore equality holds only when $y = 4$ and $z=0$.  For $i=3$, we have that  $z\le 2$.
\end{proof}
Thus, we have the following system of linear inequalities:
$$
\left\{ \begin{array}{lr}
3x+5y+6z>30; &\\
x+y+z\le 8; &\\
y+z\le 4; &\\
z\le 2;\\
x,y,z\ge 0.
\end{array}
\right.
$$
It is straight forward to check that its only integer solutions
are
$$(4,4,0),(5,2,1),(4,3,1),(5, 1,2),(3,2,2),(4,2,2).$$  However,
none of these solutions satisfies Claim~\ref{claim:valueofxyz}. It implies that the graph $J^*$ does not exist which leads to a contradiction.

This completes the proof of Theorem \ref{thm:Main}.
\qed

%%%%%%%%%%%%%%%%%%%%%%%%%%%%%%%%%%%%%%%%%%%%%%%%%%%%%%%%%%%%%%%%%%%%%%%%%%%%%%%%%%%
%%%%%%%%%%%%%%%%%%%%%%%%%%%%%%%%%%%%%%%%%%%%%%%%%%%%%%%%%%%%%%%%%%%%%%%%%%%%%%%%%%%
\section{Remark}

The following question was asked in \cite{CY}.
\begin{question} \label{Q-CY}
For each positive integer $k$, what is the minimum value $\alpha_k$ such that there exists
a constant $c_k$ such that if $G$ is $k$-degenerate with maximum degree at most $D$, then
$\omega(G^2) \leq \alpha_k D + c_k$?
\end{question}
If $G$ is $k$-degenerate with $\Delta(G) \leq D$, then $G^2$ has degeneracy at most
$(2k-1)D-k^2$, so $\alpha_k \leq 2k-1$.  In particular, for $k = 2$, Cranston and Yu \cite{CY} showed that $\alpha_2 = \frac{5}{2}$.  On the other hand, our result implies that $c_2 = 0$ when $D \geq D_0 = 6\times(331\times 2+10\times{ 331\choose 2}+2000)$.

\section*{Acknowledgments}
Seog-Jin Kim is supported by the National Research Foundation of Korea(NRF) grant funded by the Korea government(MSIT)(No.NRF-2021R1A2C1005785),
and Xiaopan Lian is supported by National Natural Science Foundation of China (No. 12161141006) and China Scholarship Council(CSC) Grant $\#$202306200010. 
%%%%%%%%%%%%%%%%%%%%%%%%%%%%%%%%%%%%%%%%%%%%%%%%%%%%%%%%%%%%%%%%%%%%%%%%%%%%%%%%%%%
%%%%%%%%%%%%%%%%%%%%%%%%%%%%%%%%%%%%%%%%%%%%%%%%%%%%%%%%%%%%%%%%%%%%%%%%%%%%%%%%%%%

%%%%%%%%%%%%%%%%%%%%%%%%%%%%%%%%%%%%%%%%%%%%%%%%%%
\newpage
\section{Appendix (The Proof of Theorem \ref{thm:modify})}\label{appendix}
In this appendix, we present the proof Theorem \ref{thm:modify}. We shall emphasize here that the proof is exactly that of Theorem 5 in \cite{CY}. Theorem~\ref{thm:modify} explains how to obtain $(G^*,S^*,\sigma^*)$ from $(G,S,\sigma)$ such that $|S \setminus S^*| \leq 60$.

\begin{THMMAIN}(Theorem 5, \cite{CY})
For every positive integer $D \geq 1732$,
if a 2-degenerate graph $G$ with $\Delta(G)\le D$ satisfies that $\omega(G^2)=f(D)>\frac{5}{2}D$, then there is a subgraph $G^*$ of $G$ which is nice w.r.t.~a clique $S^*$ in 
${(G^*)}^2$ and $|S^*|\ge f(D)-60$.
\end{THMMAIN}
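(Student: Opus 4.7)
Since the stated theorem modifies only the numerical constants in Theorem 5 of \cite{CY}, my plan is to retrace that argument with the weaker hypothesis $f(D) > \frac{5}{2}D$ in place of $f(D) \geq \frac{5}{2}D + 60$, and verify that the bound $|S^*| \geq f(D) - 60$ still holds for $D \geq 1733$.

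First, I would fix a maximum clique $S$ in $G^2$ with $|S|=f(D)$, together with a 2-degeneracy order $\sigma$ of $G$ chosen so that the first vertex of $S$ in $\sigma$ appears as late as possible. The goal is to identify a small ``bad set'' $B \subseteq S$ whose removal leaves a clique that is already on its way to being nice: pairs $u,v \in S$ adjacent in $G$ (obstructing independence of $S$ in $G$), and vertices of $S$ whose position in $\sigma$ blocks an eventual consecutive arrangement. Each discard is charged against a short structure (an edge or a short path) of $G$, and since $\Delta(G)\leq D$ and $|S| > \frac{5}{2}D$, each category contributes only a handful of vertices.

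Next, setting $S^* := S \setminus B$, I would construct $G^*$ from $G$ by deleting every edge whose two endpoints both lie outside $S^*$. This keeps $S^*$ a clique in $(G^*)^2$, since every square-adjacency within $S^*$ is realized either by a $G$-edge between two vertices of $S^*$ (not deleted) or by a $2$-path whose middle vertex still has its incident edge to some vertex of $S^*$ (also not deleted). I would then rearrange $\sigma$ into $\sigma^*$ so that $S^*$ forms a single consecutive block, with the pre-$S^*$ vertices $R^*$ first, then $S^*$, then the post-$S^*$ vertices $T^*$, checking that each vertex still has at most two later neighbors in $G^*$. The construction of $B$ is designed precisely to guarantee that this rearranged order is a valid 2-degeneracy order of $G^*$.

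The main obstacle will be reproducing the Cranston--Yu bookkeeping under the slightly weaker starting hypothesis. CY obtains $|S \setminus S^*| \leq 60$ through a cascade of inequalities of the form ``the loss at stage $i$ is at most $c_i$'', each proved using that $|S|$ is sufficiently large compared to $D$. Lowering the floor on $|S|$ by $60$ while raising the floor on $D$ from $1729$ to $1733$ compensates for the shift in the sharpest of these inequalities, so every $c_i$ is preserved and the cumulative loss remains $60$. I would verify this compensation inequality by inequality; the argument then terminates exactly as in \cite{CY} and produces the desired triple $(G^*, S^*, \sigma^*)$.
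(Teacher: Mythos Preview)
Your high-level plan to follow Cranston--Yu is correct in spirit, but your description of their mechanism is inaccurate in ways that matter, and you are missing the step that actually produces the constant $60$.

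First, the construction of the bad set. You describe removing ``pairs $u,v\in S$ adjacent in $G$'' and ``vertices whose position blocks a consecutive arrangement.'' That is not how the paper (following \cite{CY}) proceeds. The argument assigns \emph{tokens}: each $v\in S$ sends a primary token to each later neighbor in $\sigma$, and a vertex holding $s$ primary tokens sends $s$ secondary tokens to each of its later neighbors. One then classifies vertices as $\textsc{Big}$ (many primary tokens), $\textsc{Basic}$ (few total tokens), and $\textsc{NonBasic}$, and sets $S^*:=\textsc{Basic}\setminus W$, where $W$ is the set of later neighbors of $\textsc{NonBasic}$ vertices. The independence of $S^*$ is not obtained by deleting adjacent pairs; it falls out because every vertex of $\textsc{Basic}$ has both later neighbors in $\textsc{Big}$ (this is forced by the inequality $1+\tv(v)+D+\pr(w_1)+\pr(w_2)+6\ge |S|$). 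Your ``handful of vertices per category'' claim needs this token bookkeeping to be justified; ``$\Delta(G)\le D$ and $|S|>\frac{5}{2}D$'' alone does not give it.

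Second, and more seriously, you omit the bootstrap that yields $60$ rather than $72$. The discharging argument first shows that every vertex of $X:=\textsc{Big}\cup\textsc{NonBasic}\cup W$ ends with charge at least $\frac{D}{4}-4$, so $|X|\le 6|S|/(\frac{D}{4}-4)$. Using only $|S|\le 3D-3$ this gives $|X|\le 72$. One then applies Theorem~\ref{thm:cynice} to the already-constructed nice pair $(G^*,S^*)$ to deduce $|S^*|\le \frac{5}{2}D$, hence $|S|\le \frac{5}{2}D+72$, and \emph{resubstitutes} this improved bound into $|X|\le 6|S|/(\frac{D}{4}-4)$ to obtain $|X|\le 60+\frac{12\cdot 144}{D-4}<61$ for $D\ge 1733$. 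Your ``cascade of inequalities, loss at stage $i$ is at most $c_i$'' picture does not capture this feedback loop, and without it you will be stuck at $72$, not $60$.
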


\begin{proof}
Suppose that $G$ is a 2-degenerate graph  with $\Delta(G)\le D$ and $S\subseteq V(G)$ is a clique in $G^2$ with $|S|=f(D)>\frac{5}{2}D$. Note that $G$ has a subgraph $G'$ which satisfies that
\begin{itemize}
\item[(1)]  $S$ is a clique of $G'^{2}$, and
\item[(2)] every edge $e\in E(G')$ has an endpoint in $S$.
\end{itemize}
Without loss of generality, we assume that $G$ and $S$ satisfy (1) and (2).  Let $\sigma$ be a vertex order witnessing that $G$ is 2-degenerate. Subject to this, choose $\sigma$ so the first vertex in $S$ appears as late as possible in $\sigma$.

Since $G$ is 2-degenerate with maximum degree at most $D$, the degeneracy of $G^2$ is  at most $(D-2)+2(D-1)=3D-4$. Therefore, $f(D)\le 3D-3$.  By using the discharging method, we show that there is a subgraph $G^*$ of $G$ which is nice w.r.t.~a clique $S^*$ in ${(G^*)}^2$ and $|S^*|\ge f(D)-60$.

%%%%%%%%%%%%%%%%%%%%%%%%%%%%%%%%%%%%%%%%%%%%%%%%%%%
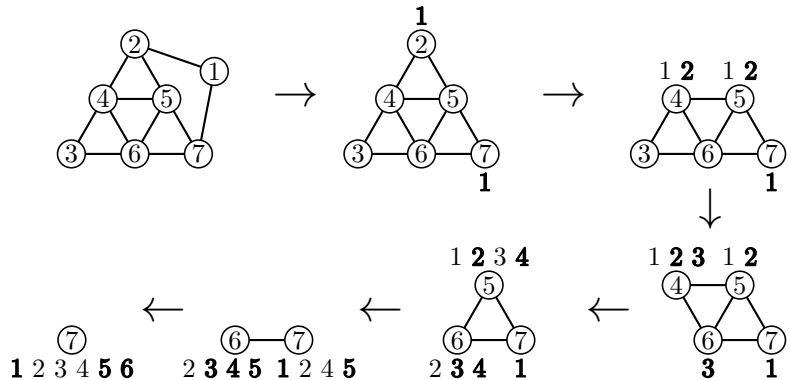
\begin{figure}[!h]
\centering
\begin{tikzpicture}[thick, scale=.75]
\tikzstyle{uStyle}=[shape = circle, minimum size = 10.5pt, inner sep = 0pt,
outer sep = 0pt, draw, fill=white, semithick]
\tikzstyle{lStyle}=[shape = circle, minimum size = 4.5pt, inner sep = 0pt,
outer sep = 0pt, draw, fill=none, draw=none]
\tikzset{every node/.style=uStyle}
\def\off{5mm}
\def\myRad{6.5mm}

\foreach \ang/\rad/\name in {90/2/2, 150/1/4, 30/1/5, 210/2/3, 270/1/6,
330/2/7, 30/2.5/1}
\draw (\ang:\rad*\myRad) node (v\name) {\footnotesize{\name}};

\foreach \x/\y in {1/2, 1/7, 2/4, 2/5, 3/4, 3/6, 4/5, 4/6, 5/6, 5/7, 6/7}
\draw (v\x) -- (v\y);

\begin{scope}[xshift=2in]
\foreach \ang/\rad/\name in {90/2/2, 150/1/4, 30/1/5, 210/2/3, 270/1/6,
330/2/7}
\draw (\ang:\rad*\myRad) node (v\name) {\footnotesize{\name}};

\foreach \x/\y in {2/4, 2/5, 3/4, 3/6, 4/5, 4/6, 5/6, 5/7, 6/7}
\draw (v\x) -- (v\y);

\foreach \tok/\x/\ang in {
{{\bolder 1}}/2/90,
{{\bolder 1}}/7/270
}
\draw (v\x) ++ (\ang:\off) node[lStyle] {\footnotesize{\tok}};
\end{scope}

\begin{scope}[xshift=4in]
\foreach \ang/\rad/\name in {150/1/4, 30/1/5, 210/2/3, 270/1/6,
330/2/7}
\draw (\ang:\rad*\myRad) node (v\name) {\footnotesize{\name}};

\foreach \x/\y in {3/4, 3/6, 4/5, 4/6, 5/6, 5/7, 6/7}
\draw (v\x) -- (v\y);

\foreach \tok/\x/\ang in {
{1 {\bolder 2}}/4/90,
{1 {\bolder 2}}/5/90,
{{\bolder 1}}/7/270
}
\draw (v\x) ++ (\ang:\off) node[lStyle] {\footnotesize{\tok}};
\end{scope}

\draw (10.2,-1.625) node[lStyle] {\Large{$\downarrow$}};

\begin{scope}[yshift=-.3in]
\begin{scope}[xshift=4in, yshift=-1in]
\foreach \ang/\rad/\name in {150/1/4, 30/1/5, 270/1/6,
330/2/7}
\draw (\ang:\rad*\myRad) node (v\name) {\footnotesize{\name}};

\foreach \x/\y in {4/5, 4/6, 5/6, 5/7, 6/7}
\draw (v\x) -- (v\y);

\foreach \tok/\x/\ang in {
{1 {\bolder {2 3}}}/4/90,
{1 {\bolder 2}}/5/90,
{{\bolder 3}}/6/270,
{{\bolder 1}}/7/270
}
\draw (v\x) ++ (\ang:\off) node[lStyle] {\footnotesize{\tok}};
\end{scope}

\begin{scope}[xshift=2.25in, yshift=-1in]
\foreach \ang/\rad/\name in {30/1/5, 270/1/6, 330/2/7}
\draw (\ang:\rad*\myRad) node (v\name) {\footnotesize{\name}};

\foreach \x/\y in {5/6, 5/7, 6/7}
\draw (v\x) -- (v\y);

\foreach \tok/\x/\ang in {
{1 {\bolder 2} 3 {\bolder 4}}/5/90,
{2 {\bolder{3 4}}}/6/270,
{{\bolder 1}}/7/270
}
\draw (v\x) ++ (\ang:\off) node[lStyle] {\footnotesize{\tok}};
\end{scope}

\begin{scope}[xshift=.7in, yshift=-1in]
\foreach \ang/\rad/\name in {270/1/6, 330/2/7}
\draw (\ang:\rad*\myRad) node (v\name) {\footnotesize{\name}};

\foreach \x/\y in {6/7}
\draw (v\x) -- (v\y);

\foreach \tok/\x/\ang in {
{2 {\bolder{3 4 5}}~~~~}/6/270,
{~~~~{\bolder 1} 2 4 \bolder 5}/7/270
}
\draw (v\x) ++ (\ang:\off) node[lStyle] {\footnotesize{\tok}};
\end{scope}

\begin{scope}[xshift=-.883in, yshift=-1in]
\foreach \ang/\rad/\name in {330/2/7}
\draw (\ang:\rad*\myRad) node (v\name) {\footnotesize{\name}};

\foreach \tok/\x/\ang in {
{{\bolder 1} 2 3 4 \bolder{5 6}}/7/270
}
\draw (v\x) ++ (\ang:\off) node[lStyle] {\footnotesize{\tok}};
\end{scope}

\foreach \x/\y in {0.5/-2.7, 4.35/-2.7, 8.4/-2.7}
\draw (\x,\y) node[lStyle] {\Large{$\leftarrow$}};

\end{scope}

\foreach \x/\y in {2.85/.34, 7.6/.34}
\draw (\x,\y) node[lStyle] {\Large{$\rightarrow$}};

\end{tikzpicture}
\caption{{\footnotesize A portion of a 2-degenerate graph $G$ and the positions of its tokens as
its vertices are deleted in a 2-degeneracy order; here
$S=\{1,\ldots,7\}$.  The number of each token indicates the vertex where
the token originated.  Primary tokens are shown in
bold. At each vertex, we show at most one token received from each other
vertex. This Figure is reproduced from [3] with permission of the authors.}} \label{fig:token}
\end{figure}

%%%%%%%%%%%%%%%%%%%%%%%%%%%%%%%%%%%%%%%%%%%%%%
We delete the vertices of $G$ one by one according to their occurrences  in the order $\sigma$. Once a vertex $v$ in $S$ is deleted, vertex $v$ gives a ``primary" token to each of its neighbors later in $\sigma$. Moreover, once a vertex $v$ that currently possesses $s$ primary tokens (for some $s >1$) is deleted, $v$ gives to each of its neighbors later in $\sigma$ exactly $s$ ``secondary" tokens (see Figure~\ref{fig:token}). 
We emphasize here that Figure~\ref{fig:token} is reproduced from [3] with permission of the authors.  

\medskip
By this process, we show that there is a subgraph $G^*$ of $G$ which is 2-degenerate,  witnessed by a vertex order $\sigma^*$, and $G^*$ is nice w.r.t.~a clique $S^*$ in 
${(G^*)}^2$. Then, by discharging method, we show that  $|S|- |S^*|\le 60$. Let tokens($v$) (resp. primary($v$)) denote the number of tokens (resp. primary tokens) that each vertex $v$ holds immediately before it is deleted.

\begin{claim} If $v\in S$ and $w_1$, $w_2$ are the neighbors of $v$ later in $\sigma$, if they exist, then
\begin{equation}\label{equa:tokens}
1+\tv(v)+D +\pr(w_1)+\pr(w_2)+6\ge|S| >\frac{5}{2}D.
\tag{$\mathcal{F}$}
\end{equation}
\end{claim}
\begin{proof}
 Let $v\in S$. Recall that $S\subseteq N_{G^2}(v)$, i.e.,  $v$ is adjacent to every vertex in $S$ in the graph $G^2$. Let $w$ be a neighbor of $v$ in $G$ which precedes $v$ in $\sigma$. Then,  there is at most one vertex $x$ in $N_G(w)\cap S$ from which $v$ receives no token (neither primary nor secondary), so, the total number of such $x$ in $S$ is at most $D$. Each other adjacency in $G^2$ between $v$ and a verte in $S$ must be accounted for either (a) by a token received by $v$ or (b) by a primary token that has been or will be received by $w_1$ or $w_2$ or (c) by being an adjacency to $w_1$ or $w_2$ or to a neighbor of some $w_i$ that comes later than $w_i$ in $\sigma$  (as shown in Figure~\ref{fig:tokensv}  where each vertex in $\{v_1,v_2\}$, $\{v_3,v_4,v_5,v_8\}$ and $\{w_1,v_6,v_7,w_2,v_9,v_{10}\}$ are respectively accounted by (a), (b) and (c)).  
Note that at most $\tv(v)$ vertices of $S$ are handled by (a), and at most 6 vertices of $S$ are handled by (c). The number handled by (b) is at most $\pr(w_1) + \pr(w_2)$. This implies (\ref{equa:tokens}). Thus, the claim holds.
\end{proof}

\begin{figure}[htbp]
\begin{center}
\begin{tikzpicture}[u/.style={fill=black,minimum size =4pt,ellipse,inner sep=1pt},node distance=1.5cm,scale=0.9]
\node[u] (v00) at (-1,0){};
\node[u] (v0) at (0,0){};
\node[u] (v1) at (1,0){};
\node[u] (v2) at (2,0){};
\node[u] (v3) at (3,0){};
\node[u] (v4) at (4,0){};
\node[u] (v5) at (5,0){};
\node[u] (v6) at (6,0){};
\node[u] (v7) at (7,0){};
\node[u] (v8) at (8,0){};
\node[u] (v9) at (9,0){};
\node[u] (v10) at (10,0){};
\node[u] (v11) at (11,0){};
\node[u] (v12) at (12,0){};
\node[u] (v13) at (13,0){};

\draw (v00)  to [bend left=25] (v1);
\draw (v0)  to [bend left=35] (v4);
\draw (v1)  to [bend left=20] (v2);
\draw (v1)  to [bend left=30] (v4);
\draw (v3)  to [bend left=30] (v7);
\draw (v5)  to [bend left=18] (v7);
\draw (v7)  to [bend left=20] (v8);
\draw (v7)  to [bend left=30] (v9);
\draw (v4)  to [bend left=20] (v7);
\draw (v4)  to [bend left=30] (v11);
\draw (v6)  to [bend left=20] (v11);
\draw (v10)  to [bend left=18] (v11);
\draw (v11)  to [bend left=20] (v12);
\draw (v11)  to [bend left=30] (v13);
\draw[->, line width=1pt, >=latex, scale=2] (2.5,-0.5) -- (3.5,-0.5);

    \node[below=0.1cm,font=\small] at (v00) {$v_1$};
      \node[below=0.1cm,font=\small] at (v0) {$v_2$};
      \node[below=0.1cm,font=\small] at (v1) {$w$};
        \node[below=0.1cm,font=\small] at (v3) {$v_3$};
          \node[below=0.1cm,font=\small] at (v5) {$v_4$};
            \node[below=0.1cm,font=\small] at (v6) {$v_5$};
              \node[below=0.1cm,font=\small] at (v8) {$v_6$};  
              
                \node[below=0.1cm,font=\small] at (v9) {$v_7$};
                  \node[below=0.1cm,font=\small] at (v10) {$v_8$};
                  \node[below=0.1cm,font=\small] at (v12) {$v_9$};
                      \node[below=0.1cm,font=\small] at (v13) {$v_{10}$};
              
   \node[below=0.1cm,font=\small] at (v2) {$x$};
   \node[below=0.1cm,font=\small] at (v4) {$v$};
    \node[below=0.1cm,font=\small] at (v7) {$w_1$};
  \node[below=0.1cm,font=\small] at (v11) {$w_2$};
    \node[below=0.85cm,font=\small] at (v6) {later};
   \end{tikzpicture}
\end{center}
\caption{Adjacencies between $v$ and vertices in $S$ in $G$ w.r.t~occurrences in $\sigma$.}\label{fig:tokensv}
\end{figure}
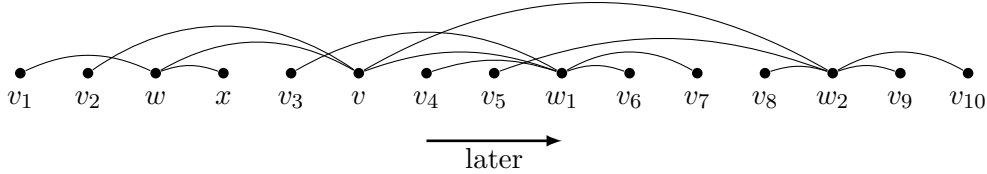

In order to analyze the movement of tokens, we define Big, Basic, and Nonbasic as follows.
\begin{definition}
\begin{enumerate}[(1)]
\item $\bi:= \{v \in V(G): \pr(v)>\frac{1}{4}D-4\}$.
\item $\ba:= \{v \in S: \tv(v)<\frac{1}{4}D-4\}$ and  $\no:=S\setminus \ba$.
\end{enumerate}
\end{definition}

\medskip
Note that $\bi\cap \ba=\emptyset$ while the relationship between  $\bi$ and $\no$ is unclear. Now, we consider the sum of the number of tokens at each vertex.  For each $v$ in $S$, $v$ gives primary tokens to at most two vertices, and each of those vertices gives secondary tokens (due to $v$) to at most two more vertices. Thus, the contribution of $v$ to the total number of tokens held by vertices at any point  is at most 6. So, 
\begin{equation}\label{tokens-upper}
\sum_{v\in V(G)}\tv(v)\le 6|S|\le 18D
\tag{$\mathcal{G}$}
\end{equation} 
which implies that $|\no\cup \bi|\le \frac{18D}{\frac{D}{4}-4}<73$.  

\medskip
We proceed to show the existence of $G^*,S^*$ and $\sigma^*$.

\begin{claim} \label{claim:vinbasic}
Each vertex $v \in \ba$ has two neighbors later in $\sigma$ that are both in $\bi$.
\end{claim}
\begin{proof}
Let $v \in \ba$ and let $w_1$, $w_2$ be the neighbors of $v$ later in $\sigma$, if they exist.  Combining  $v$ being basic with   inequaty~\ref{equa:tokens}, we have that  \[
\pr(w_1)+\pr(w_2)> \frac{5}{4}D-3.
\] 
Since each vertex of $G$ has at most $D$ neighbors  in $S$, we have that $\pr(w_i)\le D$ for each $i\in \{1,2\}$. This implies  that  $\pr(w_i)> \frac{1}{4}D-4$, for each $i\in \{1,2\}$. Thus, the claim holds.
\end{proof}

Let $W:= \{w : w \in N(v)$ for some $v \in\no$ and $w$ appears later in $\sigma$ than $v\}$.
Now we start the construction of  $G^*$, $ S^*$, $\sigma^*$.
\begin{construction}\label{con:CY}
\begin{itemize}
\item[(1)] Let $S^*:=\ba\setminus W$.
\item[(2)] Move $\bi$ to the end of $\sigma$ (after the final vertex of $S^*$) and move $(\no\cup W)\setminus\bi$
to the start of $\sigma$ (before the first vertex of $S^*$); call the resulting order $\sigma^*$.
\item[(3)] Delete every edge of $G$ with both endpoints outside $S^*$; call the resulting graph $G^*$.
\end{itemize}
\end{construction}
Now, we prove that $G^*$, $S^*$, $\sigma^*$  are exactly what we want. 

\begin{claim} $S^*$ appears consecutively in $\sigma^*$, and $S^*$ and $V(G^*)\setminus S^*$ are independent sets of $G^*$.
\end{claim}
\begin{proof}
First, we show that $S^*$ appears consecutively in $\sigma^*$. Suppose, 
to the contrary, that there is a vertex $v\notin S^*$ appears between $u\in S^*$ and $w\in S^*$ in the order $\sigma^*$.  Note that, by Construction~\ref{con:CY}(2), we have that $v\notin \bi\cup \no\cup W$. It implies that $v\notin S$ since $S=\ba\cup \no$. Thus, $v\in V(G)\setminus S$. Since the first vertex of $S$ appears as late as possible in $\sigma$ and each edge has an endpoint in $S$, we have that $N_{G}(v)\cap S\neq\emptyset$ which implies that $v$ is a later neighbor in $\sigma$ of some vertex $x$ in $S$, otherwise, we may move $v$ before the first vertex of $S$ which contradicts the choice of $\sigma$. If $x\in \ba$, then by Claim~\ref{claim:vinbasic}, we have that $v\in \bi$, a contradiction. Then, we have that $x\in \no$ which implies that $v\in W$ by the definition of $W$, a contradiction. Therefore, $S^*$ appears consecutively in $\sigma^*$. 

Observe that by Construction~\ref{con:CY}(3), $V(G^*)\setminus S^*$ is an independent set of $G^*$. Thus, it suffices to show that $S^*$ is an independent set of $G^*$.  Suppose that there is an edge between two vertices $u$ and  $w$  in $S^*$ where $w$ is a later neighbor of $u$ in $\sigma$. Since $u$ belongs to $\ba$, by Claim~\ref{claim:vinbasic}, we have $w\in \bi$, a contradiction. 
Therefore, the claim holds.
\end{proof}

The following two claims were proved in \cite{CY}. So, we just state the claims and those proofs.

\begin{claim}\label{claim:2-degen}
 $G^*$ is 2-degenerate, as witnessed by $\sigma^*$.
\end{claim}
\begin{proof}
 Consider steps (2) and (3) above. Suppose we move some $v\in \bi$ to the end of $\sigma$. If $v$
has a neighbor $w$ later (in $\sigma$), then $w\notin \ba$, so $w\notin S^*$. Thus, in step (3) we delete edge $vw$.
Suppose instead that we move a vertex $v\in W\cup \no \setminus \bi $ to the start of $\sigma$. If $v\in W\setminus\bi$ has a neighbor $w$ earlier (in $\sigma$), then by the definition of $W$, we have that $w\notin \ba$; if $v\in \no\setminus\bi$ has a neighbor $w$ earlier (in $\sigma$), then by Claim~\ref{claim:vinbasic}, we have that $w\notin \ba$. Thus, for both cases, we have that $w\notin S^*$ and so again in step (3) we delete $vw$. Therefore,  $\sigma^*$ is a 2-degeneracy order for $G^*$ because $\sigma$ is a 2-degeneracy order for $G$.
\end{proof}

\begin{claim}\label{claim:clique}
$S^*$
forms a clique in ${(G^*)}^2$.
\end{claim}

\begin{proof}
Every edge deleted when forming $G^*$ has both endpoints outside of $S^*$. But deleting such
edges cannot impact adjacency in ${(G^*)}^2$ between two vertices in $S^*$. Thus,  
${(G^*)}^2[S^*]=G^2[S^*]$.
\end{proof}

%%%%%%%%%%%%%%%%%%%%%%%%%%%%%%%%%%%%%%%%%%%%%%%%%%%%%%%%%%%%
Now, we finish the proof of Theorem \ref{thm:modify} by showing that $|S\setminus S^*|\le 60$. 
\begin{claim}\label{claim:discharge}
$|S\setminus S^*|< 61$.
\end{claim}
\begin{proof}
Let $X:= \bi\cup \no\cup W$. Note that by Construction~\ref{con:CY}(1), we have that $|S\setminus S^*|\le |X|$.  To show that $|X|<61$, we use a discharging argument. Every $x$ in $X$ has an initial charge $\tv(x)$.  After distributing charge, we show that each $x$ in $X$ has charge at least $\frac{D}{4}-4$.  Then by inequality (\ref{tokens-upper}), we have \[
|S\setminus S^*|\le |X| \leq \frac{6|S|}{\frac{D}{4}-4} \leq \frac{18D}{\frac{D}{4}-4} \leq 73,
\]
since $D>1732$. So, by Theorem~\ref{thm:cynice}, we have $|S|\le \frac{5}{2}D+72$. 
Hence since $D>1732$, if we substitute the bound $|S|\le \frac{5}{2}D+72$ into the inequality $\sum_{v\in V(G)}\tv(v)\le 6|S|$ again, we have the following inequality
\[
|S\setminus S^*| \le |X|\le \frac{6|S|}{\frac{D}{4}-4}\le \frac{15D+6\times 72}{\frac{D}{4}-4}=60+\frac{12\times 144}{D-4}<61
\]
which completes the proof of the claim.

Now we distrbute the charges. For $x \in X$, let $w_1$ and $w_2$ be the neighbors of $x$ that follow $x$ in $\sigma$ (if they exist).
Note that  by the definition of $W$, it suffices to consider each $x\in \bi\cup \no$ (note that this union does not include $W$) and distribute charge so that 
$x$, $w_1$, and $w_2$ all finish with charge at least  $\frac{D}{4}-4$.

Suppose that $x\in \bi$. By definition, $\pr(x)>\frac{D}{4}-4$ which also implies that  $\tv(w_i)>\frac{D}{4}-4$  since $\tv(w_i)\ge \pr(x)$  for
each $i\in\{1, 2\}$. So $x$ does not need to give charge to any neighbor later in $\sigma$. Thus, $x$ (and each of its later neighbors) finishes with charge at least $\frac{D}{4}-4$.

Suppose that $x\in \no\setminus\bi$. If $\tv(x)\ge \frac{3}{4}D-12$, then $x$ gives charge $\frac{D}{4}-4$ to each of its at most two neighbors that follows it in $\sigma$. Thus, we assume that  $\frac{D}{2}-8\le \tv(x)<\frac{3}{4}D-12$. By inequality~(\ref{equa:tokens}),
we have that 
\[
\pr(w_1)+\pr(w_2)>\frac{3}{4}D+5
\] 
and so $\pr(w_i)\ge\frac{D}{4}-4$ for some $i\in\{1, 2\}$.
Thus, at least one later neighbor of $x$ is big. So $x$ gives charge $\frac{D}{4}-4$ to at most one neighbor later in $\sigma$. Thus, $x$ (and each of its later neighbors) finishes with charge at least $\frac{D}{4}-4$.

Finally, suppose that $\frac{D}{4}-4\le \tv(x)< \frac{D}{2}-8$. By inequality~(\ref{equa:tokens}), we have that 
\[\pr(w_1)+\pr(w_2)> D+1.\]
 Thus, $w_i\in \bi$ for at least one $i \in \{1, 2\}$, so $w_i$ needs no charge from $x$. If $w_1$, $w_2\in \bi$, then
neither $w_i$ receives charge from $x$, so $x$ (and each $w_i$) finishes with charge at least $\frac{D}{4}-4$. Suppose
instead, by symmetry, that $w_1\in \bi$ and $w_2\notin\bi$. Since $\pr(w_i)\le D$, we know that
\[
\tv(x)+\pr(w_2)>\frac{D}{2}-7.
\] 
Thus, $x$ gives to $w_2$ charge $\tv(x)-(\frac{D}{4}-4)$. Clearly, $x$ finishes with charge at least $\frac{D}{4}-4$. Furthermore, $w_2$ ends with charge at least 
\[\pr(w_2)+\tv(x)-\left(\frac{D}{4}-4 \right)>\frac{D}{2}-7-\left(\frac{D}{4}-4 \right)=\frac{D}{4}-3.
\]

Thus, each vertex of $X$ finishes with charge at least $\frac{D}{4}-4$.  \end{proof}

Therefore, Theorem \ref{thm:modify} holds.
\end{proof}

\end{document}